\begin{document}
%
%
\newtheorem{construction}{Construction}
\newtheorem{theorem}[construction]{Theorem}
\newtheorem{maintheorem}[construction]{Main Theorem}
\newtheorem{KeyTheorem}[construction]{Key Theorem}
\newtheorem{corollary}[construction]{Corollary}
\newtheorem{lemma}[construction]{Lemma}
\newtheorem{prop}[construction]{Proposition}
\newtheorem{definition}[construction]{Definition}
%
%
\newenvironment{proof}%
{\noindent{\bf Proof.}\rm}%
{\hfill $\square$\medskip}%
%
\newenvironment{comment}
	{\color{blue!70!black}}
    {}
%
%
\newcommand{\G}[1]{\ensuremath{\mathcal{G}_{#1}}}
\renewcommand{\H}[1]{\ensuremath{\mathcal{H}_{#1}}}
\newcommand{\field}[1]{\ensuremath{\mathbb{F}_{#1}}}
\newcommand{\QR}[1]{\ensuremath{\mathbb{QR}_{#1}}}
\newcommand{\I}{\ensuremath{\mbox{\sc i}}}
\newcommand{\aut}[1]{\ensuremath{\mbox{\sc Aut}\left(#1\right)}}
\font\smallmath=cmmib7
\font\tinymath=cmmib5

\title{\bf Embedding $K_{3,3}$ and $K_5$ on\\ the Double Torus}  
\author{
 William L. Kocay\\
  \  University of Manitoba, Computer Science Department\\
  Winnipeg, Manitoba, R3T 2N2 Canada\\ \\
  Andrei Gagarin\\
 \  School of Mathematics, Cardiff University\\
  Cardiff, CF24 4AG, UK\\  
  }


\maketitle

\begin{abstract}
The Kuratowski graphs $K_{3,3}$ and $K_5$ characterize planarity.
Counting distinct 2-cell embeddings of these two graphs on orientable surfaces was
previously done by using Burnside's Lemma and their automorphism groups, 
without actually constructing the embeddings.
We obtain all 2-cell embeddings of these graphs on the double torus, 
using a constructive approach.
This shows that there is a unique non-orientable 2-cell embedding of $K_{3,3}$,
$14$ orientable and $17$
non-orientable 2-cell embeddings of $K_5$ on the double torus,
which explicitly confirms the enumerative results.
As a consequence, several new polygonal representations of the double torus are presented.

\end{abstract}

\section{Introduction} 
\label{intro}
The distinct (non-isomorphic) embeddings of the non-planar Kuratowski graphs $K_{3,3}$ and $K_5$ on the torus
can be found, e.g., in~\cite{GagarinKocayNeilsen, KocayKreher}.   
We want to find explicitly all their distinct 2-cell embeddings on the double torus,
which can serve as a first step in the study of graphs embeddable on the double torus.
The number of 2-cell embeddings of these graphs on orientable surfaces was previously determined
by Mull, Rieper, and White \cite{MullRieperWhite}, and Mull \cite{Mull},
using Burnside's Lemma and automorphism groups of the graphs.
In this paper, we use a constructive approach to find the embeddings and 
to determine their orientability.
By Euler's formula for the double torus, we have $n+ f - \varepsilon = -2$, implying
that a 2-cell embedding of $K_{3,3}$ has $f=1$ face and a 2-cell embedding of $K_5$
has $f=3$ faces.
Possibly some classic results to efficiently solve
\emph{NP}-hard optimization problems for planar graphs
(e.g., see \cite{Hadlock1975})
can be extended to non-planar graphs by studying their embeddings.

Graphs we consider in this paper can contain parallel edges (i.e., be multi-graphs), but not loops. 
A $2$-cell embedding of a graph $G$ on an oriented surface is characterized by its {\em rotation system}.
Given a labelling of the vertices and edges, a rotation system consists of a 
cyclic list of the incident edges for each vertex $v$, called the {\em rotation} of $v$.   
The rotation system uniquely determines the facial boundaries, and therefore the embedding of $G$ on the surface.
If $\tau$ is a rotation system for an embedding of $G$, we denote the embedding by $G^\tau$.
Two embeddings $G^{\tau_1}$ and $G^{\tau_2}$ are isomorphic if there is a permutation
of the vertices $V(G)$ and of the edges $E(G)$ that transforms $\tau_1$ into $\tau_2$.  See~\cite{KocayKreher} for more information on graph embeddings and rotation systems.

We denote by $\Theta_m$ a multi-graph consisting of two vertices $\{u,v\}$ and a set of $m$ parallel edges between them ($m\ge 1$). This graph can be considered as a generalized theta graph.
In what follows, we first construct all $2$-cell embeddings of an auxiliary graph $\Theta_5$ on the double torus, and then derive all possible embeddings of $K_{3,3}$ and $K_5$ on this surface by expanding $\Theta_5$ and some other minors of $K_{3,3}$ and $K_5$ back to the original graphs. This is done in Sections~\ref{section-theta5}, \ref{section-K33}, and \ref{section-K5}. Section~\ref{section-reps} provides different polygonal representations of the double torus, based on the results of Sections~~\ref{section-theta5} and \ref{section-K33}. 
Section~\ref{section-conclusion} and Appendices conclude the paper, explicitly describing all rotation systems for $2$-cell embeddings of $K_5$ on the double and triple tori, and providing corrections to some numerical results from \cite{GagarinKocayNeilsen} for embeddings on the torus.

\section{$\Theta_5$ on the double torus}
\label{section-theta5}

The multi-graph $\Theta_5$ satisfies Euler's formula, with $f=1$ for the double torus.
Three 2-cell embeddings of $\Theta_5$ on the double torus are
shown in Figure~\ref{theta5}.
Here  the double torus is  represented by a standard octagon $a^+b^+a^-b^-c^+d^+c^-d^-$,
traversed clockwise,  
with paired sides $\{a,b,c,d\}$.  See~\cite{FrechetFan,Hilbert, Kinsey, Stillwell1, Stillwell2, WhiteBeineke} for
more information on representations of the double torus.

\begin{figure}[ht]
\begin{center}
\begin{tikzpicture}[scale=0.75, line width = 0.5]
\draw (0, -0.6)--(0, 0.6)--(1, 1.6)--(2.5, 1.6)--(3.5, 0.6)--(3.5, -0.6)--(2.5, -1.6)--(1, -1.6)--(0, -0.6);
\node  at (-0.2, 0) {$a$};
\node  at (0.3, 1.3) {$b$};
\node  at (1.8, 1.8) {$a$};
\node  at (3.3, 1.3) {$b$};
\node  at (3.7, 0) {$c$};
\node  at (3.3, -1.3) {$d$};
\node  at (1.8, -1.8) {$c$};
\node  at (0.3, -1.3) {$d$};
\node  at (1.2, 0.2) {$u$};
\node  at (2.2, -0.2) {$v$};
\node  at (0.4, 0.2) {\smallmath 1};
\node  at (2.2, 1.0) {\smallmath 1};
\node  at (0.9, 0.7) {\smallmath 2};
\node  at (3.0, 0.7) {\smallmath 2};
\node  at (1.8, 0.2) {\smallmath 3};
\node  at (1.6, -0.8) {\smallmath 4};
\node  at (3.1, 0.2) {\smallmath 4};
\node  at (0.9, -0.7) {\smallmath 5};
\node  at (3.0, -0.6) {\smallmath 5};
\draw[line width=1] (0,0)--(3.5, 0);
\draw[line width=1] (0.5, 1.1)--(1, 0)--(0.5, -1.1);
\draw[line width=1] (1,0)--(1.75, -1.6);
\draw[line width=1] (3.0, 1.1)--(2.5, 0)--(3.0, -1.1);
\draw[line width=1] (2.5,0)--(1.75, 1.6);
\draw[fill=white] (1, 0) circle[radius=0.1];
\draw[fill=white] (2.5, 0) circle[radius=0.1];
\draw (4.5, -0.6)--(4.5, 0.6)--(5.5, 1.6)--(7, 1.6)--(8, 0.6)--(8, -0.6)--(7, -1.6)--(5.5, -1.6)--(4.5, -0.6);
\node  at (4.3, 0) {$a$};
\node  at (4.8, 1.3) {$b$};
\node  at (6.3, 1.8) {$a$};
\node  at (7.8, 1.3) {$b$};
\node  at (8.2, 0) {$c$};
\node  at (7.8, -1.3) {$d$};
\node  at (6.3, -1.8) {$c$};
\node  at (4.8, -1.3) {$d$};
\node  at (5.7, 0.2) {$u$};
\node  at (6.7, -0.2) {$v$};
\node  at (4.7, 0.2) {\smallmath 1};
\node  at (6.6, 1.2) {\smallmath 1};
\node  at (4.9, 0.5) {\smallmath 2};
\node  at (7.2, 1.0) {\smallmath 2};
\node  at (5.7, 1.4) {\smallmath 2};
\node  at (5.4, 0.7) {\smallmath 3};
\node  at (7.5, 0.7) {\smallmath 3};
\node  at (6.1, -0.8) {\smallmath 4};
\node  at (7.6, 0.2) {\smallmath 4};
\node  at (5.4, -0.7) {\smallmath 5};
\node  at (7.5, -0.6) {\smallmath 5};
\draw[line width=1] (4.5,0)--(5.5, 0)--(4.5, 0.5);
\draw[line width=1] (8,0)--(7, 0)--(7.1, 1.5);
\draw[line width=1] (5, 1.1)--(5.5, 0)--(5, -1.1);
\draw[line width=1] (5.5,0)--(6.25, -1.6);
\draw[line width=1] (7.5, 1.1)--(7, 0)--(7.5, -1.1);
\draw[line width=1] (7,0)--(6.25, 1.6);
\draw[line width=1] (5.8, 1.6)--(5.3, 1.4);
\draw[fill=white] (5.5, 0) circle[radius=0.1];
\draw[fill=white] (7, 0) circle[radius=0.1];
%
\draw (9, -0.6)--(9, 0.6)--(10, 1.6)--(11.5, 1.6)--(12.5, 0.6)--(12.5, -0.6)--(11.5, -1.6)--(10, -1.6)--(9, -0.6);
\node  at (8.8, 0) {$a$};
\node  at (9.3, 1.3) {$b$};
\node  at (10.8, 1.8) {$a$};
\node  at (12.3, 1.3) {$b$};
\node  at (12.7, 0) {$c$};
\node  at (12.3, -1.3) {$d$};
\node  at (10.8, -1.8) {$c$};
\node  at (9.3, -1.3) {$d$};
\node  at (10.2, 0.2) {$u$};
\node  at (11.7, -0.2) {$v$};
\node  at (9.2, 0.2) {\smallmath 1};
\node  at (11.1, 1.2) {\smallmath 1};
\node  at (9.9, 0.7) {\smallmath 2};
\node  at (11.7, 0.9) {\smallmath 2};
\node  at (10.8, 0.2) {\smallmath 3};
\node  at (11.2, -0.6) {\smallmath 4};
\node  at (11.5, 1.3) {\smallmath 4};
\node  at (9.3, -0.5) {\smallmath 4};
\node  at (10.6, -0.8) {\smallmath 5};
\node  at (12.1, 0.2) {\smallmath 5};
\draw[line width=1] (9,0)--(10, 0);
\draw[line width=1] (12.5,0)--(11.5, 0)--(11.3, 1.6);
\draw[line width=1] (9.5, 1.1)--(10, 0);
\draw[line width=1] (10,0)--(10.75, -1.6);
\draw[line width=1] (12, 1.1)--(11.5, 0);
\draw[line width=1] (11.5, 0)--(10, 0)--(12.3, -0.8);
\draw[line width=1] (11.5,0)--(10.75, 1.6);
\draw[line width=1] (9.0, -0.3)--(9.25, -0.85);
\draw[fill=white] (10, 0) circle[radius=0.1];
\draw[fill=white] (11.5, 0) circle[radius=0.1];
\node  at (1.75, -2.5) {$\Theta_5^{\# 1}$};
\node  at (6.25, -2.5) {$\Theta_5^{\# 2}$};
\node  at (10.75, -2.5) {$\Theta_5^{\# 3}$};
\end{tikzpicture}
\caption{Three distinct 2-cell embeddings of $\Theta_5$ on the double torus. The edges
are numbered.}
\label{theta5}
\end{center}
\end{figure}
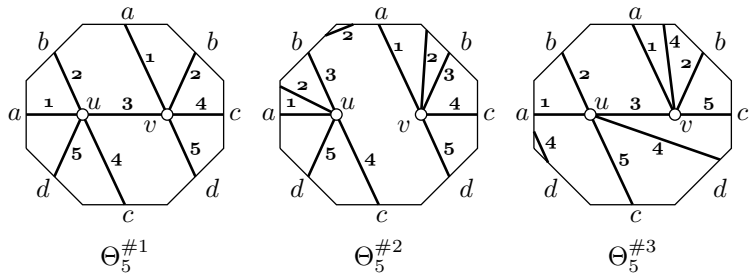

We first need to show that these comprise all distinct 2-cell embeddings of $\Theta_5$.   
They all have a single face, homeomorphic to an open disc, with facial boundary $C$
consisting of 10 edges.
The octagon representation $a^+b^+a^-b^-c^+d^+c^-d^-$  of the double torus
determines a {\em tiling of
the hyperbolic plane by regular octagons}, which is invariant under a group of translation
symmetries, see~\cite{KocayKreher, Stillwell1}.   The translations are those that
map the fundamental octagon to other octagons in the tiling, such that side $a$ is mapped to 
another side $a$,
side $b$ to another side $b$, and so forth.  
The group of translations is transitive on the octagons. When the double torus
is cut (see~\cite{FrechetFan,Hilbert}) to produce the standard octagon $a^+b^+a^-b^-c^+d^+c^-d^-$, the octagon becomes
the fundamental region of the group of translation symmetries, and
its boundary cycle $C$ corresponds to a Jordan curve in the hyperbolic plane.

In general, a Jordan curve bounding a fundamental region in the hyperbolic plane, which we will also denote by $C$, has an interior that is equivalent to the interior of the fundamental octagon, in the sense
that every point in the octagon, except points of its boundary cycle, is equivalent to exactly one point in
the interior of the Jordan curve $C$ under the action of the translation symmetries.
In other words, the interior of $C$ can also be taken as a fundamental
region for the group of translations of the hyperbolic plane.
Every 2-cell embedding of $\Theta_5$ on the double torus has an associated Jordan
curve with this property.

\begin{theorem}
\label{theta5thm}
There are exactly three distinct 2-cell embeddings of $\Theta_5$ on the double torus.
\end{theorem}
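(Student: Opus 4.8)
The plan is to work directly with rotation systems. Because $\Theta_5$ has only the two vertices $u,v$, an orientable $2$-cell embedding is specified by a pair $(\rho_u,\rho_v)$ of cyclic orderings of the five parallel edges, so there are $(4!)^2=576$ rotation systems before any identification. The group acting on them is generated by the edge relabellings ($S_5$), the vertex transposition $u\leftrightarrow v$, which interchanges $\rho_u$ and $\rho_v$, and the orientation reversal, which replaces $(\rho_u,\rho_v)$ by $(\rho_u^{-1},\rho_v^{-1})$. My first step is to spend the relabelling freedom to normalise $\rho_u=(1\,2\,3\,4\,5)$; the symmetries surviving this normalisation are the centraliser $C_5=\langle(1\,2\,3\,4\,5)\rangle$ together with the vertex swap and the orientation reversal, the latter two now composed with whatever relabelling is needed to restore $\rho_u$.

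Next I would impose the single-face condition, which by Euler's formula is exactly the condition that the embedding lies on the double torus: for $\Theta_5$ we have $V=2$ and $E=5$, so $\chi=V-E+F=F-3$, whence $F=1$ iff $\chi=-2$. Tracing the face means computing the orbits of the dart permutation $\phi=\rho\alpha$, where $\alpha$ is the involution $i_u\leftrightarrow i_v$ pairing the two ends of each edge; with $\rho_u$ normalised one checks that $\phi^2$ acts on the darts at $u$ by $i\mapsto\rho_v(i)+1 \pmod 5$, so the embedding has one face precisely when this map is a $5$-cycle. Running through the $24$ choices of $\rho_v$ and keeping those that pass this test produces the complete list of candidate double-torus embeddings.

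The decisive step is then to sort the survivors under the residual group. Here I would read off the facial boundary of each, the closed facial walk $C$ of length $10$ that, by the discussion preceding the statement, is the Jordan curve bounding a fundamental region of the octagon tiling; this gives a cyclic length-$10$ polygon word in the five edge symbols, each occurring once in each direction with corners alternating between $u$ and $v$. Two embeddings are the same exactly when their cyclic words are related by the residual symmetries. Computing these orbits should yield exactly three classes, and I would finish by matching representatives to $\Theta_5^{\#1}$, $\Theta_5^{\#2}$, $\Theta_5^{\#3}$ of Figure~\ref{theta5}.

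I expect the orbit count to be the main obstacle, for two reasons. First, an isomorphism between two embeddings need not fix the normalised rotation $\rho_u$, so the vertex swap and orientation reversal must be applied together with their renormalising relabellings; overlooking one of these identifications would wrongly inflate the count, while the delicate fact is that they happen to merge two of the one-face systems (and fix a third) so that only three classes remain. Second, I must confirm completeness in both directions: that every surviving rotation system reduces to one of the three, and that the three are genuinely pairwise non-isomorphic as polygon identifications of the double torus. Verifying these two facts carefully is what turns the finite enumeration into a proof.
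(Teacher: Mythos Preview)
Your plan is sound and would yield a correct proof, but it proceeds along a genuinely different route from the paper's. The paper never enumerates rotation systems at all: instead it places the single facial $10$-cycle $C$ inside the hyperbolic octagon tiling, records for each of the ten edge-traversals the string $s_i$ of octagon sides it crosses, and observes that the concatenation $(s_1\cdots s_{10})$ must reduce to the corner word $(b^+a^-b^-a^+d^+c^-d^-c^+)$. From the parity and non-cancellation constraints on the pairs $s_i,s_i^{-1}$ the paper extracts five possible pairing patterns $A,B,D,E,F$ of the ten slots, shows $D\equiv E$, proves $F$ is unrealisable by a neat diametric-substring argument, and exhibits explicit string assignments realising $A,B,E$ as the three embeddings of Figure~\ref{theta5}. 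What this buys is a structural classification of the facial words themselves (which the paper then reuses in Section~\ref{section-reps} to produce new $10$-gon models of the double torus); what your rotation-system enumeration buys is an elementary argument that avoids the hyperbolic picture altogether and reduces the whole question to a finite check on $24$ permutations.

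Two small points of caution. First, the paper's notion of ``distinct'' is isomorphism of rotation systems under vertex and edge permutations only; orientation reversal is introduced later as a separate ``equivalence''. Including reversal in your acting group is harmless here because all three embeddings turn out to be non-orientable (Lemmas~\ref{autotheta5_1}--\ref{autotheta5_3}), but you should flag that this coincidence is what makes the two counts agree. Second, your guess about the orbit structure (``merge two, fix a third'') is off: after normalising $\rho_u$ the one-face condition actually leaves eight of the $24$ choices of $\rho_v$, and under $C_5$ together with the renormalised vertex swap these fall into orbits of sizes $5$, $2$, $1$ corresponding to $\Theta_5^{\#1},\Theta_5^{\#3},\Theta_5^{\#2}$ respectively (consistent with the automorphism-group orders $2$, $5$, $10$ in the paper). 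This does not affect the validity of your plan, only the bookkeeping you will encounter when you carry it out.
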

\begin{proof}
There are eight octagons that meet
at each of the octagon corner vertices in the hyperbolic plane (e.g., see Figure~\ref{octagon}).
Let the vertices of $\Theta_5$ be $\{u, v\}$.  By Euler's formula, there must be just one face
in a $2$-cell embedding of $\Theta_5$ on the double torus.
Without loss of generality, place $u$ near
that corner of the octagon where side $a$ meets side $d$,  and place $v$ near
the corner of the octagon where side $b$ meets side $c$.
In the tiling of the hyperbolic plane generated by translations of the fundamental
octagon, a copy of $u$ and $v$ will appear in each octagon.
At every corner vertex of the octagon, the sides appear in the cyclic clockwise order as 
$(b^+, a^-, b^-, a^+, d^+, c^-, d^-, c^+)$.   This is illustrated in Figure~\ref{octagon},
where the eight octagons surrounding a corner vertex are illustrated schematically.
Here copies of vertex $u$ are shaded light gray, and copies
of $v$ are shaded dark gray.

\begin{figure}[ht]
\begin{center}
\begin{tikzpicture}[scale=0.75, line width = 0.5]
\draw (0, 1.828)--(0, -1.828);
\draw (-2, 0)--(2, 0);
\draw (-1.4, 1.28)--(1.4, -1.28);
\draw (-1.4, -1.28)--(1.4, 1.28);
\draw[fill=black] (0, 1.45)--(0.1, 1.3)--(0, 1.35)--(-0.1, 1.3)--(0, 1.45);
\draw[fill=black] (0, -1.45)--(0.1, -1.3)--(0, -1.35)--(-0.1, -1.3)--(0, -1.45);
\draw[fill=black] (-1.2, 0)--(-1.3, -0.1)--(-1.27, 0)--(-1.3, 0.1)--(-1.2, 0);
\draw[fill=black] (1.2, 0)--(1.3, -0.1)--(1.27, 0)--(1.3, 0.1)--(1.2, 0);
\draw[fill=black] (-1.05, -0.96)--(-1.122, -1.18)--(-1.16, -1.045)--(-1.27, -1)--(-1.05, -0.96);
\draw[fill=black] (1.05, 0.96)--(1.122, 1.18)--(1.16, 1.045)--(1.27, 1)--(1.05, 0.96);
\draw[fill=black] (-1.05, 0.96)--(-0.9, 0.95)--(-1.02, 0.99)--(-1.03, 0.80)--(-1.05, 0.96);
\draw[fill=black] (1.05, -0.96)--(0.9, -0.95)--(1.02, -0.99)--(1.03, -0.80)--(1.05, -0.96);
\node  at (0.2, 1) {$b$};
\node  at (0.75, 0.95) {$a$};
\node  at (1, 0.25) {$b$};
\node  at (0.95, -0.6) {$a$};
\node  at (-0.2, -1) {$d$};
\node  at (-1, -0.7) {$c$};
\node  at (-1, 0.25) {$d$};
\node  at (-0.75, 0.95) {$c$};
\draw (0, 1.828) .. controls (0.5, 2.5) and (1.4, 2.2) .. (1.4, 1.28);
\node at (0.2, 2.2) {\smallmath a};
\node at (0.57, 2.38) {\smallmath b};
\node at (0.9, 2.3) {\smallmath c};
\node at (1.2, 2.15) {\smallmath d};
\node at (1.4, 1.9) {\smallmath c};
\node at (1.48, 1.6) {\smallmath d};
\draw (0, -1.828) .. controls (-0.5, -2.5) and (-1.4, -2.2) .. (-1.4, -1.28);
\node at (-0.2, -2.2) {\smallmath c};
\node at (-0.57, -2.38) {\smallmath d};
\node at (-0.9, -2.3) {\smallmath a};
\node at (-1.2, -2.15) {\smallmath b};
\node at (-1.4, -1.9) {\smallmath a};
\node at (-1.48, -1.6) {\smallmath b};
\draw (0, 1.828) .. controls (-0.5, 2.5) and (-1.4, 2.2) .. (-1.4, 1.28);
\node at (-0.2, 2.2) {\smallmath a};
\node at (-0.57, 2.38) {\smallmath b};
\node at (-0.9, 2.3) {\smallmath a};
\node at (-1.2, 2.15) {\smallmath d};
\node at (-1.4, 1.9) {\smallmath c};
\node at (-1.48, 1.6) {\smallmath d};
\draw (0, -1.828) .. controls ( 0.5, -2.5) and (1.4, -2.2) .. (1.4, -1.28);
\node at (0.2, -2.2) {\smallmath c};
\node at (0.57, -2.38) {\smallmath d};
\node at (0.9, -2.3) {\smallmath c};
\node at (1.2, -2.15) {\smallmath b};
\node at (1.4, -1.9) {\smallmath a};
\node at (1.48, -1.6) {\smallmath b};
\draw (1.4, 1.28) .. controls (2.1, 1.4) and (2.4, 0.6) .. (2, 0);
\node at (1.8, 1.4) {\smallmath b};
\node at (2.05, 1.25) {\smallmath c};
\node at (2.25, 1.08) {\smallmath d};
\node at (2.3, 0.8) {\smallmath c};
\node at (2.3, 0.5) {\smallmath d};
\node at (2.25, 0.2) {\smallmath a};
\draw (-1.4, -1.28) .. controls (-2.1, -1.4) and (-2.4, -0.6) .. (-2, 0);
\node at (-1.8, -1.4) {\smallmath d};
\node at (-2.05, -1.25) {\smallmath a};
\node at (-2.25, -1.08) {\smallmath b};
\node at (-2.3, -0.8) {\smallmath a};
\node at (-2.3, -0.5) {\smallmath b};
\node at (-2.25, -0.2) {\smallmath c};
\draw (-1.4, 1.28) .. controls (-2.1, 1.4) and (-2.4, 0.6) .. (-2, 0);
\node at (-1.8, 1.4) {\smallmath d};
\node at (-2.05, 1.25) {\smallmath c};
\node at (-2.25, 1.08) {\smallmath b};
\node at (-2.3, 0.8) {\smallmath a};
\node at (-2.3, 0.5) {\smallmath b};
\node at (-2.25, 0.2) {\smallmath a};
\draw (1.4, -1.28) .. controls (2.1, -1.4) and (2.4, -0.6) .. (2, 0);
\node at (1.8, -1.4) {\smallmath b};
\node at (2.05, -1.25) {\smallmath a};
\node at (2.25, -1.08) {\smallmath d};
\node at (2.3, -0.8) {\smallmath c};
\node at (2.3, -0.5) {\smallmath d};
\node at (2.25, -0.2) {\smallmath c};
%
%
\draw[fill=lightgray] (0.25, -0.5) circle[radius=0.08]; 
\draw[fill=lightgray] (1.1, 1.35) circle[radius=0.08]; 
\draw[fill=lightgray] (1.9, 0.3) circle[radius=0.08]; 
\draw[fill=lightgray] (1.9, -0.9) circle[radius=0.08]; 
\draw[fill=lightgray] (-0.6, -1.95) circle[radius=0.08]; 
\draw[fill=lightgray] (-1.7, -1) circle[radius=0.08]; 
\draw[fill=lightgray] (-1.85, 0.2) circle[radius=0.08]; 
\draw[fill=lightgray] (-0.9, 1.9) circle[radius=0.08]; 
\draw[fill=darkgray] (-0.25, 0.5) circle[radius=0.08]; 
\draw[fill=darkgray] (0.65, 2) circle[radius=0.08]; 
\draw[fill=darkgray] (1.8, 1) circle[radius=0.08]; 
\draw[fill=darkgray] (1.85, -0.25) circle[radius=0.08]; 
\draw[fill=darkgray] (0.8, -1.9) circle[radius=0.08]; 
\draw[fill=darkgray] (-1.2, -1.4) circle[radius=0.08]; 
\draw[fill=darkgray] (-2.05, -0.42) circle[radius=0.08]; 
\draw[fill=darkgray] (-1.8, 1) circle[radius=0.08]; 
\end{tikzpicture}
\caption{The eight octagons meeting at one corner vertex.}
\label{octagon}
\end{center}
\end{figure}
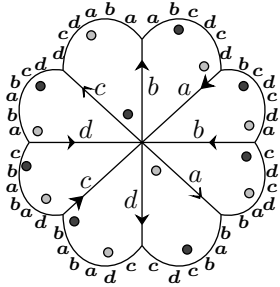

Denote by $C$ the Jordan curve in the hyperbolic plane corresponding to the 
unique facial cycle of $\Theta_5$ when embedded on the double torus with a $2$-cell face.
The five edges of $\Theta_5$ each appear twice on the boundary of $C$,
which is a 10-gon, so that the vertices of the cycle alternate $(v,u,v,u,v,u,v,u,v,u)$, and each edge is traversed exactly once in each direction. 
The translation symmetries of the hyperbolic plane will map $C$
to copies of $C$, such that their interiors are disjoint and, together with $C$ and its
translations, cover the entire hyperbolic plane.

Without loss
of generality, the interior of $C$ can be taken to
contain the central corner vertex of the diagram in Figure~\ref{octagon}.
However, it must not contain any other corner vertices, as those are obtained by translations
of the central corner vertex.  Therefore $C$ is a Jordan curve in the hyperbolic plane
whose interior contains the central corner vertex in Figure~\ref{octagon}.
A similar situation happens when graphs are embedded
on the torus.    For 
example, Figure~\ref{theta3} shows two (equivalent) embeddings
of $\Theta_3$ on the torus.  The embedding on the right is somewhat ``skewed'', crossing
the rectangle boundary several times, but
it is nevertheless an equivalent embedding with a single face (see also Figure~2 in \cite{GagarinKocayNeilsen} and corresponding explanation at the beginning of Section~2 in \cite{GagarinKocayNeilsen}).

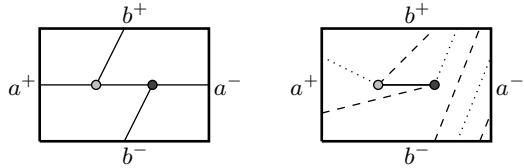
\begin{figure}[ht]
\begin{center}
\begin{tikzpicture}[scale=0.75, line width = 0.5]
\draw[line width=1] (0, 0)--(3, 0)--(3, 2)--(0,2)--(0,0);
\draw[line width=1] (5, 0)--(8, 0)--(8, 2)--(5,2)--(5,0);
\coordinate (u) at (1, 1);
\coordinate (v) at (2, 1);
\coordinate (x) at (6, 1);
\coordinate (y) at (7, 1);
\draw (u)--(v);
\draw (x)--(y);
\draw (u)--(1.5, 2);
\draw (1.5, 0)--(v);
\draw (v)--(3, 1);
\draw (0, 1)--(u);
\draw (x)--(y);
\draw[dashed] (x)--(7, 2);
\draw[dashed] (7, 0)--(7.8,2);
\draw[dashed] (7.8, 0)--(8, 0.5);
\draw[dashed] (5, 0.5)--(y);
\draw[dotted] (y)--(7.4, 2);
\draw[dotted] (7.4, 0)--(8, 1.5);
\draw[dotted] (5, 1.5)--(x);
\node  at (-0.3, 1) {\small{$a^+$}};
\node  at (3.35, 1) {\small{$a^-$}};
\node  at (4.7, 1) {\small{$a^+$}};
\node  at (8.35, 1) {\small{$a^-$}};
\node  at (1.7, 2.27) {\small{$b^+$}};
\node  at (1.7, -0.23) {\small{$b^-$}};
\node  at (6.7, 2.27) {\small{$b^+$}};
\node  at (6.7, -0.23) {\small{$b^-$}};
%
\draw[fill=lightgray] (u) circle[radius=0.08]; 
\draw[fill=darkgray] (v) circle[radius=0.08]; 
\draw[fill=lightgray] (x) circle[radius=0.08]; 
\draw[fill=darkgray] (y) circle[radius=0.08]; 
\end{tikzpicture}
\caption{Two equivalent embeddings of $\Theta_3$ on the torus.}
\label{theta3}
\end{center}
\end{figure}


As a result, the 10 edges of $C$ must intersect all of the
eight octagon sides incident on the central corner vertex.  
So, we can write $C=(v,s_1,u,s_2,v, \ldots, u,s_{10})$, where each $s_i$ is a label
corresponding to a traversal of an edge of the facial cycle of $\Theta_5$ --  
it represents a string of zero or more letters, corresponding to
the sides of the octagons crossed by the corresponding edge of $C$, $i=1,\ldots,10$. 
For example, if edge $i$ crosses an octagon side labelled $a$, then $s_i$ will be one of $a^+$ or
$a^-$, depending on the direction in which side $a$ is crossed. When travelling along $C$,
there is a region to the right and a region to the left.  When an edge of $C$ 
crosses a side of the octagon directed from the right to the left of $C$, this will be denoted by
$+$.  So $s_i=a^+$ means that edge $i$ of $C$ crosses the octagon side labelled $a$,
directed from the right to the left.  If the edge of $C$ crosses two or more sides of the octagon,
then $s_i$ will be a string of two or more letters, e.g., $s_i=b^+a^-$,
or  $s_i=b^+a^-c^+$, etc.  Note that an edge $uv$ of $C$ can cross any sequence
of octagon sides, as long as $C$ does not enclose any corners other than
the central corner.
If the edge does not cross any edges of the octagon, then we write 
$s_i=\phi$.  

As each edge of $\Theta_5$ occurs twice on $C$, with
opposite orientations, there is an $s_j$ corresponding to each $s_i$ 
such that $s_j=s_i^{-1}$, where $j\not=i$. For example, if $s_i=b^+a^-c^+$, then $s_j=s_i^{-1}=c^-a^+b^-$.
Concatenating all the 
strings $s_i$ on the cycle $C$, $i=1,\ldots,10$,
gives the string $(s_1s_2\ldots s_{10})$, which must reduce to a cyclic
permutation of $(b^+a^-b^-a^+d^+c^-d^-c^+)$, namely the cyclic sequence
of octagon sides incident on the central corner vertex in Figure~\ref{octagon}.
Note that there will be at most one $s_i=\phi$ (and its inverse), for otherwise the
embedding would have a digon face, which is not the case here.

Therefore, we have a cycle of ten edge labels $(s_1, s_2, \ldots, s_{10})$, 
consisting of five strings and their inverses.
Let the five strings be denoted by $\alpha, \beta, \rho, \sigma, \tau$ and their inverses by 
$\alpha^{-1}, \beta^{-1}, \rho^{-1}, \sigma^{-1}, \tau^{-1}$.
If $s_i=\alpha$, then $\alpha^{-1}$ can only be some
$s_j$ such that $j\not\equiv i\ ({\rm mod}\ 2)$, i.e., 
$i$ and $j$ must have opposite parity.  This is because each edge of 
$\Theta_5$ is traversed exactly twice on $C$, in {\em opposite} directions,
and the vertices of $C$ alternate $u,v,u,v,\ldots$. Furthermore, for
each string, for example $\alpha$, if $s_i=\alpha$, we have $\alpha^{-1}\not=s_{i\pm 1}$, 
or else $\alpha\alpha^{-1}$ or $\alpha^{-1}\alpha$ would cancel, 
which is not possible.  
Here arithmetic on the subscripts is done mod 10,
such that the result is a number in the range $1\ldots 10$.

We first show that there are at most five ways to assign the labels of 
strings to the cycle edges.  
Without loss of generality, let $s_1 = \alpha$.   Then $\alpha^{-1}$
is either $s_4$ or $s_6$.  Note that we need not consider $s_8$ as $\alpha^{-1}$, as this
is equivalent to reversing the cycle.
This gives two cases:

1) $(\alpha, s_2, s_3, \alpha^{-1}, s_5, s_6, s_7, s_8, s_9, s_{10})$

2) $(\alpha, s_2, s_3, s_4, s_5, \alpha^{-1}, s_7, s_8, s_9, s_{10})$

\noindent
We can characterize these cases by saying that in case (1) some pair 
$\alpha, \alpha^{-1}$ occurs as $s_i$ and $s_{i\pm 3}$,
and in case (2) no pair $\alpha, \alpha^{-1}$ occurs as $s_i$ and $s_{i\pm 3}$, i.e. 
every pair $\alpha, \alpha^{-1}$ occurs as $s_i$ and $s_{i\pm 5}$.
This division into two cases results in a number of subsequent cases. 
First of all, we have a unique completion for case (2):

$(\alpha, \beta, \sigma, \rho, \tau, \alpha^{-1}, \beta^{-1}, \sigma^{-1}, \rho^{-1}, \tau^{-1})$\qquad (*)

\noindent
In case (1), without loss of generality, we can choose $s_6$ as $\beta$.  Then $\beta^{-1}$ is
either $s_3$ or $s_9$, giving two sub-cases:

1a) $(\alpha, s_2, \beta^{-1}, \alpha^{-1}, s_5, \beta, s_7, s_8, s_9, s_{10})$

1b) $(\alpha, s_2, s_3, \alpha^{-1}, s_5, \beta, s_7, s_8, \beta^{-1}, s_{10})$

\noindent
In case (1a), without loss of generality, we can choose $s_8$ as $\sigma$.  Then $\sigma^{-1}$ is $s_5$, giving

1ai) $(\alpha, s_2, \beta^{-1}, \alpha^{-1}, \sigma^{-1}, \beta, s_7, \sigma, s_9, s_{10})$

\noindent
We then can choose $s_9$ as $\rho$, which requires $\rho^{-1}$ to be $s_2$. Then
$s_7$ is $\tau$ and $s_{10}$ is $\tau^{-1}$:

$(\alpha, \rho^{-1}, \beta^{-1}, \alpha^{-1}, \sigma^{-1}, \beta, \tau, \sigma, \rho, \tau^{-1})$\qquad (*)

\noindent
In case (1b), we can choose $s_7$ as $\sigma$.  Then $\sigma^{-1}$ is either 
$s_2$ or $s_{10}$:

1bi) $(\alpha, \sigma^{-1}, s_3, \alpha^{-1}, s_5, \beta, \sigma, s_8, \beta^{-1}, s_{10})$

1bii) $(\alpha, s_2, s_3, \alpha^{-1}, s_5, \beta, \sigma, s_8, \beta^{-1}, \sigma^{-1})$

\noindent
In case (1bi), we can choose $s_3$ as $\rho$.  Then $\rho^{-1}$ is either $s_8$ or $s_{10}$.
Each results in a unique completion:

$(\alpha, \sigma^{-1}, \rho, \alpha^{-1}, \tau, \beta, \sigma, \rho^{-1}, \beta^{-1}, \tau^{-1})$\qquad (*)

$(\alpha, \sigma^{-1}, \rho, \alpha^{-1}, \tau, \beta, \sigma, \tau^{-1}, \beta^{-1}, \rho^{-1})$\qquad (*)

\noindent
In case (1bii), we can choose $s_2$ as $\rho$. Then $\rho^{-1}$ is $s_5$.  There
is a unique completion:

$(\alpha,\rho, \tau, \alpha^{-1}, \rho^{-1}, \beta, \sigma, \tau^{-1}, \beta^{-1}, \sigma^{-1})$\qquad (*)

\noindent
This gives five possible solutions for labelling the facial cycle edges. 
We rename the letters so that each solution begins with $(\alpha,\beta,\rho,\alpha^{-1},\ldots)$, except for the first one found.  The results are:

\medbreak
$A=(\alpha,\beta,\rho,\sigma,\tau,\alpha^{-1},\beta^{-1},\rho^{-1},\sigma^{-1},\tau^{-1})$ 

$B= (\alpha,\beta,\rho,\alpha^{-1},\sigma,\rho^{-1},\tau,\sigma^{-1},\beta^{-1},\tau^{-1})$ 

$F= (\alpha, \beta, \rho, \alpha^{-1}, \sigma, \tau, \beta^{-1}, \rho^{-1}, \tau^{-1}, \sigma^{-1})$ 

$D= (\alpha, \beta, \rho, \alpha^{-1}, \sigma, \tau, \beta^{-1}, \sigma^{-1}, \tau^{-1}, \rho^{-1})$

$E= (\alpha,\beta,\rho, \alpha^{-1}, \beta^{-1}, \sigma, \tau, \rho^{-1}, \sigma^{-1}, \tau^{-1})$

\medbreak
We first show that cyclic sequences $A, B$, and $E$ can be realized by suitable choices
of strings for labels. If we choose $\alpha=b^+, \beta=a^-, \rho=\phi, \sigma=d^+, \tau=c^-$,
we obtain
$E=\allowbreak (b^+, a^-, \phi, b^-, a^+, d^+, c^-, \phi, d^-, c^+)$,
which reduces to $(b^+a^-b^-a^+d^+c^-d^-c^+)$, as required.
This solution is identical to the embedding $\Theta_5^{\#1}$ in Figure~\ref{theta5},
starting with $s_4$.

If we choose 
$\alpha=b^-, \beta=b^+a^+, \rho=a^-, \sigma=d^-, \tau=c^+$,
we obtain\\
\noindent
$A=\allowbreak (b^-, b^+a^+, a^-, d^-, c^+, b^+, a^-b^-, a^+, d^+, c^-)$,
which also reduces to\\
\noindent
$(b^+a^-b^-a^+d^+c^-d^-c^+)$. This solution is identical to the 
embedding $\Theta_5^{\#2}$ in Figure~\ref{theta5},
starting with $s_1$.

If we choose 
$\alpha=b^+, \beta=a^-, \rho=\phi, \sigma=a^+d^+, \tau=c^-$,
we obtain\\
\noindent
$B=\allowbreak (b^+, a^-, \phi, b^-, a^+d^+, \phi, c^-,d^-a^-, a^+, c^+)$,
which also reduces to \\
\noindent
$(b^+a^-b^-a^+d^+c^-d^-c^+)$. 
This solution is identical to the 
embedding $\Theta_5^{\#3}$ in Figure~\ref{theta5},
starting with $s_1$.

Therefore these three choices of strings for labels give three embeddings of $\Theta_5$,
each with one face.  These three embeddings are pairwise non-isomorphic. For any string $s_i$ 
occurring in $A$, its inverse is $s_{i+5}$, which is not the case with $B$ or $E$.
And for any string $s_i$ occurring in $B$, its inverse is 
either $s_{i+3}$ or $s_{i-3}$, which is not the case with $E$.

Clearly, any embeddings of $\Theta_5$ corresponding to solutions $A, B$ or $E$
are isomorphic to one of these three, because the facial cycle determines
the rotation system uniquely.  Therefore each of the solutions 
$A, B$ and $E$ defines an isomorphism class of embeddings.
Also, notice that renaming the strings of $D$ as
$\alpha\rightarrow\beta\rightarrow\rho\rightarrow\alpha^{-1}$
converts $D$ (starting at $s_{10}$) into $E$.
Therefore labellings $D$ and $E$ are equivalent.

We show that the remaining solution $F$
cannot be realized by an embedding. 
Consider the
cyclic product of strings $(s_1\ldots s_{10})$.   This must reduce to
$(b^+a^-b^-a^+d^+c^-d^-c^+)$.  Notice that the latter string has a decomposition
into a substring based on $\{a,b\}$ and another substring based on $\{c,d\}$,
and that they are disjoint.  The letter diametrically opposite any letter
$a$ or $b$ is $c$ or $d$, respectively, and conversely.  Consider the three edges of $F$ labelled $s_4, s_5, s_6$. 
Their inverses are the edges labelled $s_1,s_{10},s_9$, respectively.
Suppose $s_5=qrs$, where $q$ is the longest prefix that cancels with a suffix of $s_4$,
and $s$ is the longest suffix of $s_5$ that cancels with a prefix of $s_6$.
So, $s_4=pq^{-1}$ and $s_6=s^{-1}t$ for suitable strings $p,q,r,s,t$.
Then $s_4s_5s_6=pq^{-1}qrss^{-1}t = prt\not=\phi$.  
The diametrically opposite string is 
$s_9s_{10}s_1=s_6^{-1}s_5^{-1}s_4^{-1}=t^{-1}ss^{-1}r^{-1}q^{-1}qp^{-1}=t^{-1}r^{-1}p^{-1}=(prt)^{-1}$,
which contains the same letters as $prt$, a contradiction.

It follows that there are exactly three distinct 2-cell embeddings of $\Theta_5$ on the double torus, shown in Figure~\ref{theta5}.
\end{proof}


\subsection{Automorphisms, Rotations, Orientability}
The automorphisms of the embeddings of Figure~\ref{theta5}  will be helpful
in finding the embeddings of $K_{3,3}$.   We will need the rotations at
vertices $u$ and $v$ for each embedding, written explicitly below.  
The numbers below are the edge numbers
in the embeddings of Figure~\ref{theta5}. 
An automorphism is a permutation of vertices and/or edges that leaves the rotation system unchanged.
An embedding is \emph{non-orientable} if the embedding obtained by reversing
the rotations is isomorphic to the original embedding.  Otherwise it is
\emph{orientable}.
\medbreak
$\Theta_5^{\# 1}$\qquad\qquad
$u: (1,2,3,4,5)\qquad v: (1,2,4,5,3)$
\medbreak
$\Theta_5^{\# 2}$\qquad\qquad
$u: (1,2,3,4,5)\qquad v: (1,2,3,4,5)$
\medbreak

$\Theta_5^{\# 3}$\qquad\qquad
$u: (1,2,3,4,5)\qquad v: (1,4,2,5,3)$
\medbreak


\begin{lemma}
\label{autotheta5_1}
Embedding $\Theta_5^{\# 1}$ of Figure~\ref{theta5} has the automorphism group of order two and is non-orientable.
\vspace{2mm}

\begin{proof}
The permutation $(u,v)(1,4)(2,5)$ interchanges the rotations of $u$ and $v$, and
so is an automorphism.  It is the only possible non-trivial automorphism due to the unique position of
edge $3$ in the facial cycle: this is the only edge whose opposite traversal is diametrically opposite to 
itself on the cycle (see Figure~\ref{hyperbolic}).  
The permutation $(1,5)(2,4)$ maps the rotations of $u$ and
$v$ to their reversals. Therefore, $\Theta_5^{\# 1}$ is non-orientable.
\end{proof}
\end{lemma}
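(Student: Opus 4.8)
The plan is to treat the two assertions separately, working entirely from the two explicit rotations $u\colon(1,2,3,4,5)$ and $v\colon(1,2,4,5,3)$ recorded above. Throughout I regard an automorphism as a pair $(\phi,\pi)$, where $\phi$ permutes $\{u,v\}$ and $\pi$ permutes the edge set $\{1,2,3,4,5\}$, subject to the requirement that $\pi$ carry the rotation at each vertex $w$ to the rotation at $\phi(w)$; and I regard the embedding as non-orientable precisely when some such pair carries the given rotation system to the one obtained by reversing both rotations.

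First I would bound the automorphism group. Consider the homomorphism $(\phi,\pi)\mapsto\phi$ from the automorphism group to the symmetric group on $\{u,v\}$. Its kernel consists of the automorphisms fixing both vertices, i.e.\ edge permutations $\pi$ that simultaneously preserve the cyclic rotation $\sigma_u=(1\,2\,3\,4\,5)$ at $u$ and $\sigma_v=(1\,2\,4\,5\,3)$ at $v$; equivalently, $\pi$ lies in the centraliser of each of these two $5$-cycles. The centraliser of a $5$-cycle in $S_5$ is the cyclic group of order $5$ it generates, and $\sigma_v\notin\langle\sigma_u\rangle$ (the only power of $\sigma_u$ agreeing with $\sigma_v$ on $1\mapsto 2$ is $\sigma_u$ itself, which then disagrees at $2$, where $\sigma_u$ sends $2\mapsto 3$ but $\sigma_v$ sends $2\mapsto 4$). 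Since $5$ is prime, two distinct order-$5$ subgroups meet only in the identity, so the kernel is trivial, the automorphism group injects into a group of order two, and it remains only to produce one nontrivial element. The transposition of vertices together with the edge permutation $(1,4)(2,5)$ does the job: applying $(1,4)(2,5)$ to $(1,2,3,4,5)$ yields $(4,5,3,1,2)$, which is $v$'s rotation read from a different starting point, and since $(1,4)(2,5)$ is an involution it symmetrically returns $v$'s rotation to $u$'s. This exhibits an automorphism of order two, so the group has order exactly two.

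For non-orientability I would simply exhibit a permutation carrying the rotation system to its reversal. Keeping both vertices fixed and applying the edge permutation $(1,5)(2,4)$ sends $(1,2,3,4,5)$ to $(5,4,3,2,1)$, which is the reversal of $u$'s rotation, and sends $(1,2,4,5,3)$ to $(5,4,2,1,3)$, a cyclic reading of the reversal of $v$'s rotation; hence the reversed embedding is isomorphic to the original and the embedding is non-orientable. The only genuine obstacle is the upper bound on the automorphism group: once one suspects the answer is order two, the geometric reason is that in the facial $10$-gon the edge $3$ occupies a distinguished position (its two traversals are antipodal, unlike every other edge), so any automorphism must fix edge $3$ and can then only interchange $u$ and $v$; the centraliser computation above makes this rigorous without appealing to the figure. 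The existence of the two required explicit permutations is then routine verification.
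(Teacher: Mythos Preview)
Your proof is correct. Both you and the paper exhibit the same automorphism $(u,v)(1,4)(2,5)$ and the same orientation-reversing edge permutation $(1,5)(2,4)$; the only real difference is in how the upper bound $|\mathrm{Aut}|\le 2$ is obtained. The paper argues geometrically from the facial $10$-gon: edge $3$ is the unique edge whose two traversals are antipodal on the cycle, so any automorphism fixes edge $3$, and this pins down the group. You instead bound the kernel of the projection to $S_{\{u,v\}}$ algebraically, observing that an edge permutation fixing both vertices must centralise both $5$-cycles $\sigma_u$ and $\sigma_v$, and that two distinct order-$5$ subgroups of $S_5$ meet only in the identity. Your route is fully self-contained from the rotation data and avoids any appeal to the figure, at the modest cost of a short group-theoretic detour; the paper's route is shorter but presupposes the facial-cycle structure worked out in the preceding theorem.
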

\medbreak


\begin{lemma}
\label{autotheta5_2}
Embedding $\Theta_5^{\# 2}$ of Figure~\ref{theta5} has the automorphism group of order ten,
is non-orientable, and 
all its edges are equivalent.
\vspace{2mm}

\begin{proof}
It is clear from the rotations that permutations $(u,v)$ and $(1,2,3,4,5)$ are automorphisms of $\Theta_5^{\# 2}$.  
Therefore the group has order ten. The permutation $(2,5)(3,4)$ maps the
rotations to their reversals, implying the embedding is non-orientable.
\end{proof}
\end{lemma}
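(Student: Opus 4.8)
The plan is to establish the three assertions --- order ten, non-orientability, and edge-transitivity --- separately, all by direct computation with the rotations $u:(1,2,3,4,5)$ and $v:(1,2,3,4,5)$ recorded above for $\Theta_5^{\#2}$. The crucial feature I would exploit throughout is that the two rotations are \emph{identical} cyclic sequences, which is exactly what makes this embedding so symmetric.

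First I would obtain a lower bound on the automorphism group by exhibiting two commuting automorphisms. The edge $5$-cycle $c=(1,2,3,4,5)$, acting on edges and fixing $u$ and $v$, sends each rotation $(1,2,3,4,5)$ to itself (as a cyclic sequence), so it preserves the rotation system and generates a cyclic subgroup of order five. The vertex transposition $(u,v)$, combined with the identity on edges, is an automorphism precisely because $R(u)=R(v)$: it simply exchanges the two equal rotations. These two permutations have disjoint supports, so they commute and generate a group isomorphic to $\mathbb{Z}_{10}$; hence the automorphism group has order at least ten.

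The heart of the argument is the matching upper bound, which the terse statement ``the group has order ten'' takes for granted. Any automorphism $(\sigma,\pi)$ either fixes or swaps $\{u,v\}$, and in both cases the requirement that $\pi$ carry the clockwise rotation at $w$ to the clockwise rotation at $\sigma(w)$ collapses, since both rotations equal $(1,2,3,4,5)$, to the single condition that $\pi$ preserve the cyclic sequence $(1,2,3,4,5)$; thus $\pi\in\langle c\rangle$. With five choices of $\pi$ for each of the two choices of $\sigma$, there are at most ten automorphisms, so the group has order exactly ten. The one point that needs care here --- and the main potential pitfall --- is the orientation bookkeeping: an orientation-preserving automorphism must send $(1,2,3,4,5)$ to itself and never to its reversal $(1,5,4,3,2)$, and the vertex swap is legitimate only because the two rotations coincide rather than being reverses of one another.

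Edge-transitivity is then immediate, since $\langle c\rangle$ already acts on $\{1,2,3,4,5\}$ as a single $5$-cycle, placing all five edges in one orbit. Finally, for non-orientability I would reverse every rotation to get $u:(1,5,4,3,2)$ and $v:(1,5,4,3,2)$ and then verify that the edge permutation $(2,5)(3,4)$, fixing $u$ and $v$, carries $(1,2,3,4,5)$ to $(1,5,4,3,2)$. This exhibits an isomorphism from $\Theta_5^{\#2}$ to its mirror image, so by the definition given above the embedding is non-orientable. Apart from the orientation bookkeeping in the upper-bound step, every verification is a routine cyclic-permutation check.
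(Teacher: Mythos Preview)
Your proof is correct and follows essentially the same approach as the paper: you exhibit the same generators $(u,v)$ and $(1,2,3,4,5)$ for the automorphism group and the same orientation-reversing permutation $(2,5)(3,4)$. The only difference is that you supply the upper-bound argument (that any edge permutation arising in an automorphism must lie in $\langle(1,2,3,4,5)\rangle$), which the paper leaves implicit in its one-line ``Therefore the group has order ten.''
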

\medbreak


\begin{lemma}
\label{autotheta5_3}
Embedding $\Theta_5^{\# 3}$ of Figure~\ref{theta5} has the automorphism group of order five,
is non-orientable, and
all its edges are equivalent.
\vspace{2mm}

\begin{proof}
From the rotations, we see that permutation $(1,2,3,4,5)$ is an automorphism of $\Theta_5^{\# 3}$.
This permutation generates the whole automorphism group. 
Therefore the group has order five.  The permutation $(2,5)(3,4)$ maps the
rotations to their reversals, implying the embedding is non-orientable.
\end{proof}
\end{lemma}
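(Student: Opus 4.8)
The plan is to read off the embedding entirely from the two rotations $u\colon(1,2,3,4,5)$ and $v\colon(1,4,2,5,3)$, and to encode each as a $5$-cycle on the edge set $\{1,\dots,5\}$: let $\rho_u=(1,2,3,4,5)$ send each edge to its successor in the rotation of $u$, and let $\rho_v=(1,4,2,5,3)$ do the same for $v$. The key observation, which I would establish first, is the relation $\rho_v=\rho_u^{3}$ (indeed $\rho_u^{3}$ sends $1\mapsto4\mapsto2\mapsto5\mapsto3\mapsto1$). Since $\Theta_5$ has only the two vertices $u,v$, any automorphism $(g,\pi)$ either fixes both vertices or swaps them, and in either case the edge permutation $\pi$ must carry the rotation system to itself.

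First I would handle the vertex-fixing automorphisms. Requiring $\pi$ to preserve the rotation of $u$ means $\pi\rho_u\pi^{-1}=\rho_u$, so $\pi$ lies in the centralizer of the $5$-cycle $\rho_u$ in the symmetric group on $\{1,\dots,5\}$; this centralizer is exactly $\langle\rho_u\rangle$, of order five. Because $\rho_v=\rho_u^{3}$ is a power of $\rho_u$, every element of $\langle\rho_u\rangle$ automatically also preserves the rotation of $v$, so all five of these permutations are genuine automorphisms. In particular $(1,2,3,4,5)$ is an automorphism, and these five exhaust the vertex-fixing ones.

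Next I would rule out a vertex swap. If some $\pi$ gave an automorphism with $g=(u,v)$, it would have to send the rotation of $u$ to that of $v$ and conversely, i.e.\ $\pi\rho_u\pi^{-1}=\rho_v$ and $\pi\rho_v\pi^{-1}=\rho_u$. Substituting $\rho_v=\rho_u^{3}$ into the first gives $\pi\rho_u\pi^{-1}=\rho_u^{3}$, and cubing then yields $\pi\rho_v\pi^{-1}=\pi\rho_u^{3}\pi^{-1}=(\rho_u^{3})^{3}=\rho_u^{9}=\rho_u^{4}\neq\rho_u$, a contradiction. Hence no automorphism swaps $u$ and $v$, the automorphism group is precisely $\langle(1,2,3,4,5)\rangle$ of order five, and since this cyclic group acts transitively on the five edges, all edges are equivalent.

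For non-orientability it remains to exhibit a permutation carrying the rotation system to its reversal. I would take $\pi=(2,5)(3,4)$ and check that it sends the cyclic list $(1,2,3,4,5)$ to $(1,5,4,3,2)=\rho_u^{-1}$; because $\rho_v=\rho_u^{3}$, the same $\pi$ then automatically sends $\rho_v$ to $(\rho_u^{-1})^{3}=\rho_v^{-1}$, so every rotation is reversed and the reversed embedding is isomorphic to the original. The only real obstacle is bookkeeping: one must fix the convention that an orientation-preserving map transforms rotations \emph{without} reversal, so that the conjugation identities above are the correct ones; once the relation $\rho_v=\rho_u^{3}$ is spotted, each verification reduces to a short computation in the symmetric group on five letters.
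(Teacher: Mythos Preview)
Your proof is correct and follows the same approach as the paper—working directly from the two rotations and exhibiting the automorphism $(1,2,3,4,5)$ and the reversal $(2,5)(3,4)$. Where the paper simply asserts that $(1,2,3,4,5)$ generates the full automorphism group, you actually prove it: the observation $\rho_v=\rho_u^{3}$ lets you compute the vertex-fixing automorphisms as the centralizer $\langle\rho_u\rangle$ and cleanly rule out any vertex swap via the contradiction $\pi\rho_v\pi^{-1}=\rho_u^{4}\neq\rho_u$, so your argument is a genuine strengthening of the paper's sketch.
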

\medbreak

\begin{corollary}
\label{autotheta5}
In each of the embeddings $\Theta_5^{\# 1}$ and $\Theta_5^{\# 2}$,
vertices $u$ and $v$ are equivalent.  They are not equivalent in $\Theta_5^{\# 3}$.
\end{corollary}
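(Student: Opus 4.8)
The plan is to settle each of the three embeddings in turn, reading off the answer directly from the automorphism groups already determined in Lemmas~\ref{autotheta5_1}--\ref{autotheta5_3}. The key observation to state first is that since $\Theta_5$ has only the two vertices $u$ and $v$, they are \emph{equivalent} precisely when some automorphism of the embedding carries $u$ to $v$; and because any automorphism restricts to a bijection of the two-element set $\{u,v\}$, this is the same as asking for an automorphism that interchanges $u$ and $v$.

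For $\Theta_5^{\#1}$ and $\Theta_5^{\#2}$ the equivalence is immediate, and I would simply exhibit a single swapping automorphism. By Lemma~\ref{autotheta5_1} the permutation $(u,v)(1,4)(2,5)$ is an automorphism of $\Theta_5^{\#1}$ and it interchanges $u$ and $v$; by Lemma~\ref{autotheta5_2} the permutation $(u,v)$ is an automorphism of $\Theta_5^{\#2}$, again interchanging the two vertices. In both cases $u$ and $v$ therefore lie in a common orbit and are equivalent.

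The only step that requires more than producing one automorphism is the non-equivalence for $\Theta_5^{\#3}$, and this is the part where care is needed: one must rule out \emph{every} automorphism swapping $u$ and $v$, which depends on knowing the full group rather than a single element. Here I would invoke Lemma~\ref{autotheta5_3}, which asserts that the automorphism group of $\Theta_5^{\#3}$ has order five and is generated by the edge permutation $(1,2,3,4,5)$. Since this generator fixes both $u$ and $v$ and acts only on the edges, every element of the group fixes $u$ and fixes $v$; hence no automorphism carries $u$ to $v$, and the two vertices are inequivalent.

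The anticipated main obstacle is therefore not a computation but the reliance on completeness of the earlier classification: the non-equivalence claim is vacuous unless we genuinely know that $\Theta_5^{\#3}$ has no automorphisms beyond the five already found. As this completeness is exactly what Lemma~\ref{autotheta5_3} supplies, the corollary follows with no additional work.
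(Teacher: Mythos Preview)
Your proposal is correct and follows exactly the approach the paper intends: the corollary is stated without proof immediately after Lemmas~\ref{autotheta5_1}--\ref{autotheta5_3}, as it is meant to be read off directly from the automorphism groups computed there. Your argument makes this explicit in the right way, exhibiting the swapping automorphisms for $\Theta_5^{\#1}$ and $\Theta_5^{\#2}$ and using the completeness of the order-five group for $\Theta_5^{\#3}$.
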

\medbreak

If $uv$ is an edge of a graph $G$, we denote the graph obtained by contracting edge $uv$
by $G\cdot uv$. 
Clearly, if $uv$ belongs to a triangle in $G$, contracting $uv$ will result in parallel edges in $G\cdot uv$, and contracting an edge $xy$ from a set of parallel edges in $G$ would create a loop in $G\cdot xy$.
We will need the following simple lemma.  

\begin{lemma}
\label{contract}
Suppose $G^\tau$ is a $2$-cell embedding of a graph $G$ (parallel edges are possible) on the double torus, no face of $G^\tau$ is a digon, and $uv$ is a non-parallel edge of $G$ which is not on a boundary of a triangular face of $G^\tau$. Then $G^\tau\cdot uv$ is a $2$-cell embedding of $G\cdot uv$ with no loops or digon facial cycles.
\vspace{2mm}

\begin{proof}
Clearly, loops in $G^\tau\cdot uv$ and $G\cdot uv$ would only be possible if $uv$ were from a set of parallel edges. So, $G^\tau\cdot uv$ and $G\cdot uv$ have no loops. Since $uv$ is not on a boundary of a triangular face of $G^\tau$, contracting $uv$ doesn't create any digon faces in $G^\tau\cdot uv$.
Clearly, contracting an edge on the boundary of a $2$-cell face in $G^\tau$ leaves the corresponding face in the resulting embedding equivalent to an open disk (no loops in $G$ here).
\end{proof}
\end{lemma}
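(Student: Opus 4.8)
The plan is to separate the three assertions — that $G^\tau\cdot uv$ is a $2$-cell embedding, that it is loopless, and that it has no digon faces — and to prove each by tracking the effect of contraction on the rotation system and on the facial boundary walks. Since $G$ has no loops and $uv$ is a genuine edge, $uv$ is a non-loop edge, so its contraction is the standard operation that splices the rotation at $u$ and the rotation at $v$ into a single rotation at the merged vertex $w$, at the position formerly occupied by the two ends of $uv$. At the level of faces, the key bookkeeping fact I will use is that every edge of a cellular embedding is traversed exactly twice in total among all facial walks (once in each direction), and that the boundary walk of each face of $G^\tau\cdot uv$ is obtained from that of the corresponding face of $G^\tau$ by deleting those occurrences of $uv$ that lay on it.

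For the $2$-cell claim I would invoke the standard fact that contracting a non-loop edge of a cellular embedding yields a cellular embedding of the contracted graph on the same surface, with the number of faces unchanged. Concretely, collapsing the $1$-cell $uv$ to a point is a deformation retraction of the embedded complex onto $G^\tau\cdot uv$; it fixes the underlying surface, and each $2$-cell stays an open disk because only an arc of its attaching boundary is collapsed. The splicing description of the rotation at $w$ gives the same conclusion combinatorially, and simultaneously sets up the facial-walk bookkeeping needed below.

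The loopless assertion is immediate: an edge of $G$ becomes a loop in $G\cdot uv$ exactly when both of its endpoints lie in $\{u,v\}$, i.e.\ when it is a second $u$–$v$ edge parallel to $uv$. Since $uv$ is non-parallel, no such edge exists, so $G\cdot uv$, and hence $G^\tau\cdot uv$, is loopless.

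The heart of the argument, and the step I expect to be the main obstacle, is ruling out digon faces. Because $uv$ is traversed exactly twice across all facial walks, there are only two cases. In the generic case these two traversals lie in two distinct faces $F_1,F_2$, each using $uv$ once; then the contracted face $F_i'$ has length $\ell(F_i)-1$, which equals $2$ precisely when $\ell(F_i)=3$, i.e.\ when $F_i$ is a triangular face incident to $uv$ — excluded by hypothesis. (Conceptually, two edges of $G\cdot uv$ become parallel exactly when $u,v$ share a neighbour $x$, i.e.\ when $uvx$ is a triangle of $G$, and such a triangle produces a digon \emph{face} only when it was itself a facial triangle of $G^\tau$.) The delicate case is when a single face $F$ is incident to $uv$ on both sides; then a digon arises only if $\ell(F)=4$, and one checks that the two traversals of $uv$ cannot be separated along $\partial F$ without creating a loop, so they must be consecutive, which forces $u$ or $v$ to have degree one. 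Verifying that this pendant configuration never yields a genuine two-edge digon is the one point needing care; it is automatic whenever $u$ and $v$ are not incident to a pendant edge, as holds for all the graphs ($K_5$, $K_{3,3}$, $\Theta_5$, and their relevant minors) to which the lemma is applied. Combining the three parts gives that $G^\tau\cdot uv$ is a loopless, digon-free $2$-cell embedding of $G\cdot uv$.
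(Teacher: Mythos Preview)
Your argument follows the same three-part decomposition as the paper---looplessness, $2$-cellularity, and absence of digon faces---but with considerably more care. The paper's proof is terse: it handles the digon claim in a single sentence (``Since $uv$ is not on a boundary of a triangular face \ldots\ contracting $uv$ doesn't create any digon faces''), implicitly treating only the case where the two traversals of $uv$ lie on distinct faces. Your analysis of the remaining case, where both traversals lie on a single face $F$ with $\ell(F)=4$, is correct and goes beyond what the paper writes: the non-consecutive placement would force a loop, and the consecutive placement forces $\deg(u)=1$ or $\deg(v)=1$. Your hedge---that this pendant configuration does not arise for any of the graphs to which the lemma is actually applied (all have minimum degree at least three)---is exactly the right repair; the paper's proof simply does not address this case.
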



\section{$K_{3,3}$ on the double torus}
\label{section-K33}

\bigbreak
Denote now by $A,B,C,D,E,F$ the vertices of $K_{3,3}$, and
choose the edges $AC, AE, BD, BF$ (drawn in bold) of $K_{3,3}$ as shown in Figure~\ref{K33}.
Contract these four edges to form
a minor of $K_{3,3}$ isomorphic to $\Theta_5$.
{\em If we start with a 2-cell embedding of $K_{3,3}$ on the double torus, after contracting the edges, the resulting embedding of the minor will be a 2-cell embedding of $\Theta_5$ on the double torus.} 
This follows from Lemma~\ref{contract} and the fact
that no facial cycle of $K_{3,3}$ is a digon or triangle, as there is only one face in the embedding.
Therefore, every 2-cell embedding of $K_{3,3}$ on the double
torus can be contracted to a 2-cell embedding of $\Theta_5$, and $\Theta_5$ has only three 2-cell embeddings.
To find the distinct 2-cell embeddings of $K_{3,3}$, we restore the contracted edges
in all possible ways and compare the results for isomorphism.

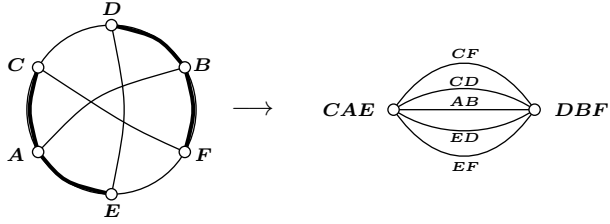
\begin{figure}[ht]
\begin{center}
\begin{tikzpicture}[scale=0.75, line width = 0.5]
\draw[fill=white] (0, 0) circle[radius=1.5];
\draw (0,1.5).. controls (0.3, 0) ..(0, -1.5);
\draw (-1.3, -0.75) .. controls (-0.3, 0.3) .. (1.3, 0.75);
\draw (-1.3, 0.75) .. controls (0.3, -0.3) .. (1.3, -0.75);
\draw[line width = 1.5] (-1.3, 0.75) .. controls (-1.5, 0) .. (-1.3, -0.75);
\draw[line width = 1.5] (0, -1.5) .. controls (-0.85, -1.3) .. (-1.3, -0.75);
\draw[line width = 1.5] (0, 1.5) .. controls (0.85, 1.3) .. (1.3, 0.75);
\draw[line width = 1.5] (1.3, 0.75) .. controls (1.5, 0) .. (1.3, -0.75);
\draw[fill=white] (0, 1.5) circle[radius=0.1];
\draw[fill=white] (0, -1.5) circle[radius=0.1];
\draw[fill=white] (-1.3, 0.75) circle[radius=0.1];
\draw[fill=white] (-1.3, -0.75) circle[radius=0.1];
\draw[fill=white] (1.3, 0.75) circle[radius=0.1];
\draw[fill=white] (1.3, -0.75) circle[radius=0.1];
\node  at (-1.7, -0.8) {\smallmath A};
\node  at (1.6, 0.8) {\smallmath B};
\node  at (-1.7, 0.8) {\smallmath C};
\node  at (0, 1.8) {\smallmath D};
\node  at (1.6, -0.8) {\smallmath F};
\node  at (0, -1.8) {\smallmath E};
\node  at (2.5, 0) {$\longrightarrow$};
\draw (5, 0)--(7.5, 0);
\draw (5, 0) .. controls (5.75, 0.5)  and (6.75, 0.5).. (7.5, 0);
\draw (5, 0) .. controls (5.75, -0.5)  and (6.75, -0.5).. (7.5, 0);
\draw (5, 0) .. controls (5.75, 1.1) and (6.75, 1.1) .. (7.5, 0);
\draw (5, 0) .. controls (5.75, -1.1) and (6.75, -1.1) .. (7.5, 0);
\draw[fill=white] (5, 0) circle[radius=0.1];
\draw[fill=white] (7.5, 0) circle[radius=0.1];
\node  at (4.2, 0) {\smallmath CAE};
\node  at (8.3, 0) {\smallmath DBF};
\node  at (6.25, 0.15) {\tinymath AB};
\node  at (6.25, 0.5) {\tinymath CD};
\node  at (6.25, 1.0) {\tinymath CF};
\node  at (6.25, -0.5) {\tinymath ED};
\node  at (6.25, -1.0) {\tinymath EF};
\end{tikzpicture}
\caption{$K_{3,3}$ and a minor isomorphic to $\Theta_5$.}
\label{K33}
\end{center}
\end{figure}

The vertices of the resulting minor $\Theta_5$ of $K_{3,3}$ will be denoted by $CAE$ and $DBF$, corresponding
to the vertices $\{A,C,E\}$ and $\{B,D,F\}$ of $K_{3,3}$ that were 
amalgamated during the edge-contractions.
The edges of $\Theta_5$ can be labelled $AB, CD, CF, ED, EF$,
in correspondence to the edges of $K_{3,3}$ they are derived from (see Figure~\ref{K33}).
There are $5! = 120$ different ways to assign the labels $AB, CD, CF, ED, EF$
to the five edges of the embeddings of $\Theta_5$ shown in Figure~\ref{theta5},
before attempting to restore the contracted edges.
However, many of them are equivalent.

\begin{lemma}
\label{autoK33}
There are eight automorphisms of $K_{3,3}$ that map the bold subgraph of Figure~\ref{K33}
to itself, generated by the permutations $(CE)$, $(DF)$, and $(AB)(CD)(EF)$.
\end{lemma}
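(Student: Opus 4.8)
The plan is to read off the bipartition of $K_{3,3}$ and then classify the relevant automorphisms by how they act on that bipartition. From the incidences one sees that the two colour classes are $X=\{A,D,F\}$ and $Y=\{B,C,E\}$ (for instance $A$ is adjacent to $B,C,E$, so $A$ lies with $D,F$). Recall that $\aut{K_{3,3}}$ has order $72$: it consists of the subgroup of order $36$ that preserves the pair $\{X,Y\}$, acting as $S_3\times S_3$ on the two classes, together with the permutations interchanging $X$ and $Y$. Since $K_{3,3}$ is complete bipartite, any bijection that either preserves both classes or swaps them is automatically adjacency-preserving, so it suffices to track the underlying set maps.

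Next I would analyse the bold subgraph $H$ with edge set $\{AC,AE,BD,BF\}$. It is a disjoint union of two paths: one with centre $A$ and leaves $C,E$, the other with centre $B$ and leaves $D,F$. Thus the degree-$2$ vertices of $H$ are exactly $A$ and $B$, while the degree-$1$ vertices are $C,E,D,F$. Any $\phi\in\aut{K_{3,3}}$ with $\phi(H)=H$ restricts to an automorphism of $H$, hence must send the centre set $\{A,B\}$ to itself, send the leaf set $\{C,D,E,F\}$ to itself, and carry each path-component of $H$ onto a path-component.

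The core of the argument is a two-case count according to whether $\phi$ fixes or swaps $X$ and $Y$. If $\phi$ fixes the classes, then since the centres $A$ and $B$ lie in different classes each must be fixed; consequently the leaf pair $\{C,E\}$ (neighbours of $A$) and the leaf pair $\{D,F\}$ (neighbours of $B$) are each preserved setwise, giving the $2\times2=4$ maps $e,(CE),(DF),(CE)(DF)$. If instead $\phi$ swaps the classes, the same degree-and-component constraints force $A\leftrightarrow B$ and $\{C,E\}\leftrightarrow\{D,F\}$, and the two leaf pairs may be matched in $2\times2=4$ ways; each such set map is a genuine part-swapping automorphism of $K_{3,3}$ preserving $H$. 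This yields exactly $4+4=8$ automorphisms. I would then check directly that the four part-fixing maps form the Klein four-group $\langle(CE),(DF)\rangle$ and that $(AB)(CD)(EF)$ is one of the four part-swapping maps; since the part-fixing maps constitute an index-$2$ subgroup and $(AB)(CD)(EF)$ lies outside it, the three listed permutations generate the whole group of order $8$. (Indeed, conjugation by $(AB)(CD)(EF)$ interchanges $(CE)$ and $(DF)$, confirming the normalising behaviour.)

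The step I expect to require the most care is the part-swapping case: one must confirm that the forced assignment $A\leftrightarrow B$, $\{C,E\}\leftrightarrow\{D,F\}$ genuinely extends to an automorphism of $K_{3,3}$ for each of the four leaf-matchings — this is exactly where completeness of the bipartite graph is used — and that no automorphism is double-counted across the two cases. The degree-and-component bookkeeping for $H$ is the other place where a misstep would change the count, so I would state the component structure of $H$ explicitly before invoking it.
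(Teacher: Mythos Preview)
Your argument is correct. The bipartition $X=\{A,D,F\}$, $Y=\{B,C,E\}$ is read off correctly from the figure, the component/degree analysis of the bold subgraph $H$ is right, and the two-case count (part-fixing versus part-swapping) yields exactly the eight automorphisms; your verification that the three listed permutations generate this group is also sound.

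The paper itself states this lemma without proof, so there is no argument to compare against. Your proof is more detailed than anything the paper supplies; in effect you have filled in what the authors left to the reader. One small stylistic remark: you could shorten the write-up by observing at the outset that the stabiliser of $H$ in $\aut{K_{3,3}}$ must permute the two path-components $\{C,A,E\}$ and $\{D,B,F\}$ preserving centres, which immediately gives a homomorphism onto $S_2$ with kernel $\langle (CE),(DF)\rangle$ --- but the case split you wrote is equally valid and arguably clearer.
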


It follows from Lemma~\ref{autoK33} that, without loss of generality, vertex 
$u$ of $\Theta_5$ can be taken
to be $CAE$, so that vertex $v$ is $DBF$.   Furthermore,
the \emph{central edge} $uv$ in the embeddings $\#1$ and $\#3$ of Figure~\ref{theta5}, 
i.e., the only edge not crossing the boundary of the octagon, can be taken to be 
either $AB$ or $CD$,
as the non-contracted edges of $K_{3,3}$ in Figure~\ref{K33}
are all equivalent to one of these.

So, we have an embedding of $\Theta_5$ of Figure~\ref{K33}, with the vertices
labelled $CAE$ and $DBF$, and the edges labelled $AB, CD, CF, ED, EF$.  The embedding
is one of $\Theta_5^{\#1}, \Theta_5^{\#2}, \Theta_5^{\#3}$ of Figure~\ref{theta5}.
Vertex $CAE$ is to be expanded into a path $[C,A,E]$, and $DBF$ is to be expanded
into a path $[D,B,F]$.   When $CAE$ is expanded, the five edges in the 
rotation of $CAE$ must be divided into two consecutive edges for $C$, one edge for $A$,
and two consecutive edges for $E$.  A similar observation holds for $DBF$.

\begin{lemma}
\label{theta5lemma2}
There are no extensions of $\Theta_5^{\# 1}$ to $K_{3,3}$ with central edge $uv$ labelled $AB$.
\vspace{2mm}

\begin{proof}
Suppose that $uv$ represents edge $AB$, as in Figure~\ref{caseAB}.
There are two ways to expand vertex $u$ into the path $[C,A,E]$, having $AB$ as 
the central edge. For each of these, the edges $CD$
and $CF$ must intersect consecutive sides
of the octagon, and the edges $ED$ and $EF$
must also intersect consecutive sides
of the octagon.  One such arrangement is shown in Figure~\ref{caseAB}.
There are eight possible ways of arranging the labels of the edges incident
on $C$ and $D$ with $CD, CF, ED, EF$ satisfying this condition.

However, vertex $v$ must be expanded into a path $[D,B,F]$, such that the edges
$CD$ and $ED$ intersect consecutive sides
of the octagon, and edges $EF$ and $CF$ intersect consecutive sides.
By inspection of the eight arrangements, we see that this is not possible.
\end{proof}
\end{lemma}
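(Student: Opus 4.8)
The plan is to recast the geometric ``consecutive sides of the octagon'' requirement as a combinatorial consecutiveness condition on the two fixed rotations of $\Theta_5^{\# 1}$, and then to reach a contradiction by a short membership argument. Expanding the amalgamated vertex $u=CAE$ back into the path $[C,A,E]$ is the inverse of contracting the edges $CA$ and $AE$, so by the rotation calculus for edge contraction it is possible exactly when the cyclic rotation at $u$ reads as (two consecutive edges at $C$)(the single edge $AB$ at $A$)(two consecutive edges at $E$). Thus a legal expansion of $u$ demands that $\{CD,CF\}$ be consecutive in the rotation and $\{ED,EF\}$ be consecutive, with the central edge $AB$ separating the two pairs; the wrap-around gap between the two pairs is the third, unused rotation-gap at the degree-three vertex $A$. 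The analogous statement at $v=DBF$ is that $\{CD,ED\}$ and $\{CF,EF\}$ must each be consecutive, separated by $AB$.

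With this dictionary in hand, I would plug in the recorded rotations $u:(1,2,3,4,5)$ and $v:(1,2,4,5,3)$, fixing the central edge as $3=AB$. At $u$ the edge $AB$ occupies position $3$, and the only partition of the remaining edges into two consecutive pairs flanking it is $\{1,2\}$ and $\{4,5\}$; hence $\{CD,CF\}$ and $\{ED,EF\}$ must be exactly these two blocks. The freedom left is the choice of which block is the $C$-block and the internal order inside each block, which is precisely the eight arrangements named in the statement.

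I would then impose the expansion condition at $v$. In the rotation $(1,2,4,5,3)$ the edge $AB=3$ again lies between the two blocks $\{1,2\}$ and $\{4,5\}$ (these are the only consecutive pairs partitioning $\{1,2,4,5\}$), so a legal expansion of $v$ forces $\{CD,ED\}$ and $\{CF,EF\}$ to be those same two blocks. But the $u$-condition has already confined both $C$-edges to one block and both $E$-edges to the other, so $CD$ and $ED$ lie in different blocks and $\{CD,ED\}$ can never equal $\{1,2\}$ or $\{4,5\}$. This is the desired contradiction, and because it depends only on block membership it is insensitive to the internal orderings and to the choice of $C$-block, eliminating all eight arrangements at once.

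The step I expect to be the main obstacle is establishing the equivalence in the first paragraph cleanly, namely proving that ``$u$ is expandable into $[C,A,E]$'' is exactly the consecutiveness condition, including the bookkeeping that the degree-three middle vertices $A$ and $B$ contribute only the single edges $AB$ and that their third rotation-gap is precisely the wrap-around between the two blocks. Once this is nailed down, the rest is the uniform block-membership contradiction above, and Figure~\ref{caseAB} serves only to confirm that the two pairs flanking $AB$ are genuinely forced.
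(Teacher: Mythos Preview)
Your proposal is correct and follows essentially the same approach as the paper: both arguments reduce to the observation that the consecutiveness constraints imposed at $u$ (the $C$-edges form one consecutive pair, the $E$-edges the other) are incompatible with those imposed at $v$ (the $D$-edges form one consecutive pair, the $F$-edges the other). The paper verifies this incompatibility by inspecting the eight labelled arrangements one by one, whereas you dispose of all eight at once via the block-membership observation that $CD$ and $ED$ necessarily lie in different blocks of $\{\{1,2\},\{4,5\}\}$; this is a tidier way to finish but not a genuinely different route.
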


\begin{figure}[ht]
\begin{center}
\begin{tikzpicture}[scale=0.75, line width = 0.5]
\draw (0, -0.6)--(0, 0.6)--(1, 1.6)--(2.5, 1.6)--(3.5, 0.6)--(3.5, -0.6)--(2.5, -1.6)--(1, -1.6)--(0, -0.6);
\node  at (-0.2, 0) {$a$};
\node  at (0.3, 1.3) {$b$};
\node  at (1.8, 1.8) {$a$};
\node  at (3.3, 1.3) {$b$};
\node  at (3.7, 0) {$c$};
\node  at (3.3, -1.3) {$d$};
\node  at (1.8, -1.8) {$c$};
\node  at (0.3, -1.3) {$d$};
\node  at (1.7, 0.15) {\tinymath AB};
\node  at (0.4, 0.15) {\tinymath CD};
\node  at (1.6, 1.2) {\tinymath CD};
\node  at (0.9, 0.7) {\tinymath CF};
\node  at (2.6, 0.85) {\tinymath CF};
\node  at (0.45, -0.5) {\tinymath ED};
\node  at (3.0, -0.5) {\tinymath ED};
\node  at (1.7, -1.0) {\tinymath EF};
\node  at (3.1, 0.15) {\tinymath EF};
\node  at (1.3, -0.18) {$u$};
\node  at (2.3, -0.18) {$v$};
\draw[line width=1] (0,0)--(3.5, 0);
\draw[line width=1] (0.5, 1.1)--(1, 0)--(0.5, -1.1);
\draw[line width=1] (1,0)--(1.75, -1.6);
\draw[line width=1] (3.0, 1.1)--(2.5, 0)--(3.0, -1.1);
\draw[line width=1] (2.5,0)--(1.75, 1.6);
\draw[fill=white] (1, 0) circle[radius=0.1];
\draw[fill=white] (2.5, 0) circle[radius=0.1];
\node  at (4.8, 0) {$\longrightarrow$};
\draw (6, -0.6)--(6, 0.6)--(7, 1.6)--(8.5, 1.6)--(9.5, 0.6)--(9.5, -0.6)--(8.5, -1.6)--(7, -1.6)--(6, -0.6);
\draw[line width=1] (6.5, 1.1)--(7, 0.5)--(7.2, 0)--(7, -0.5)--(6.5, -1.1);
\draw[line width=1] (6,0)--(7, 0.5);
\draw[line width=1] (7,-0.5)--(7.75, -1.6);
\draw[line width=1] (9.0, 1.1)--(8.5, 0)--(9.0, -1.1);
\draw[line width=1] (8.5,0)--(7.75, 1.6);
\draw[line width=1] (7.2,0)--(9.5, 0);
\node  at (5.8, 0) {$a$};
\node  at (6.3, 1.3) {$b$};
\node  at (7.8, 1.8) {$a$};
\node  at (9.3, 1.3) {$b$};
\node  at (9.7, 0) {$c$};
\node  at (9.3, -1.3) {$d$};
\node  at (7.8, -1.8) {$c$};
\node  at (6.3, -1.3) {$d$};
\node  at (7.25, 0.65) {\tinymath C};
\node  at (6.9, 0.0) {\tinymath A};
\node  at (6.7, -0.4) {\tinymath E};
\node  at (6.4, 0.45) {\tinymath CD};
\node  at (7.6, 1.2) {\tinymath CD};
\node  at (6.9, 1.0) {\tinymath CF};
\node  at (8.6, 0.85) {\tinymath CF};
\node  at (9.0, -0.5) {\tinymath ED};
\node  at (6.45, -0.75) {\tinymath ED};
\node  at (9.1, 0.15) {\tinymath EF};
\node  at (7.7, -1.0) {\tinymath EF};
\node  at (7.7, 0.15) {\tinymath AB};
\node  at (8.3, -0.2) {$v$};
\draw[fill=white] (7, 0.5) circle[radius=0.1];
\draw[fill=white] (7.2, 0) circle[radius=0.1];
\draw[fill=white] (7, -0.5) circle[radius=0.1];
\draw[fill=white] (8.5, 0) circle[radius=0.1];
\end{tikzpicture}
\caption{Case of central edge labelled $AB$, impossible to extend.}
\label{caseAB}
\end{center}
\end{figure}

\begin{lemma}
\label{CDlemma}
There are four extensions of $\Theta_5^{\#1}$ to $K_{3,3}$ with central edge $uv$ labelled $CD$.
\vspace{2mm}

\begin{proof}
Suppose that central edge $uv$ represents edge $CD$.
There are eight ways of expanding vertex $CAE$ into a path $[C,A,E]$,
some of which are shown in Figure~\ref{caseCD1}. For each of these,
edges $ED$ and $EF$ must be consecutive in the rotation of $E$, i.e. intersect consecutive sides of the octagon.
However, edges $CF, EF$  must also be consecutive in the rotation of $DBF$.  
This reduces the number of ways of expanding $u$ to a path $[C,A,E]$ to four, 
which are shown in Figure~\ref{caseCD1}.

Vertex $DBF$ must then be expanded into a path $[D,B,F]$.  In each of the four cases,
there is exactly one way to do this, as shown in Figure~\ref{caseCD2}.

\end{proof}
\end{lemma}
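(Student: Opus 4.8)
The plan is to exploit the rotation system of $\Theta_5^{\#1}$ recorded above, namely $u=CAE\colon(1,2,3,4,5)$ and $v=DBF\colon(1,2,4,5,3)$, together with the fact that undoing a contraction is a vertex expansion which splits a rotation into consecutive arcs. Since the central edge $uv$ is edge $3$ and is labelled $CD$, I fix $CD=3$ throughout and recast the problem as assigning the four remaining labels $AB,CF,ED,EF$ to edges $1,2,4,5$, subject to the two local constraints coming from expanding $CAE$ into the path $[C,A,E]$ and $DBF$ into the path $[D,B,F]$.

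First I would make the expansion rule explicit: restoring the contracted path $[C,A,E]$ at $u$ partitions the cyclic rotation $(1,2,3,4,5)$ into three consecutive arcs---a $2$-arc carrying the surviving edges $CD,CF$ of $C$, a singleton carrying the edge $AB$ of $A$, and a $2$-arc carrying the edges $ED,EF$ of $E$. Because $CD=3$ lies in the $C$-arc, its partner $CF$ must be an immediate neighbour of edge $3$ in the rotation, hence edge $2$ or edge $4$; for each such choice the singleton $A$-arc may sit on either side of the $C$-arc, and the two edges of the remaining $E$-arc may be named $ED,EF$ in either order. Enumerating these possibilities should produce the eight admissible labellings of $CAE$, in each of which $ED$ and $EF$ are automatically consecutive.

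Next I would impose the condition forced by the other endpoint: expanding $DBF$ into $[D,B,F]$ requires the two edges of $F$, namely $CF$ and $EF$, to be consecutive in the rotation $v\colon(1,2,4,5,3)$. Testing this against the eight labellings is the routine but error-prone step, and I expect exactly four to survive. For each survivor I would then check that the complementary condition---$CD$ and $ED$ consecutive at $D$, with $AB$ as the $B$-singleton---holds automatically, so that the expansion of $DBF$ is not merely possible but uniquely determined; this yields precisely four extensions.

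I expect the main obstacle to be the simultaneous bookkeeping on the two endpoints, since the consecutive-arc rule must be applied at $CAE$ and at $DBF$ relative to two different cyclic orders. The subtle points are to include the cases where the singleton $A$- or $B$-arc lies on either side of its neighbour, to avoid double-counting labellings that actually coincide, and to confirm genuine uniqueness of the $[D,B,F]$ expansion rather than mere existence; here I would rely on reading the explicit arc order off $v\colon(1,2,4,5,3)$.
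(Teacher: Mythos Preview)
Your plan is correct and follows essentially the same argument as the paper: eight admissible expansions of $CAE$ into $[C,A,E]$ (your $2\times 2\times 2$ count matches the paper's ``eight ways''), the constraint that $CF$ and $EF$ be consecutive in the rotation at $DBF$ cuts these to four, and each survivor admits a unique expansion of $DBF$ into $[D,B,F]$. The only difference is presentational---you work algebraically with the numbered rotation system $(1,2,3,4,5)$ and $(1,2,4,5,3)$, whereas the paper argues geometrically from the octagon figures---but the underlying case analysis is identical.
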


\begin{figure}[ht]
\begin{center}
\begin{tikzpicture}[scale=0.75, line width = 0.5]
\draw (0, -0.6)--(0, 0.6)--(1, 1.6)--(2.5, 1.6)--(3.5, 0.6)--(3.5, -0.6)--(2.5, -1.6)--(1, -1.6)--(0, -0.6);
\node  at (-0.2, 0) {$a$};
\node  at (0.3, 1.3) {$b$};
\node  at (1.8, 1.8) {$a$};
\node  at (3.3, 1.3) {$b$};
\node  at (3.7, 0) {$c$};
\node  at (3.3, -1.3) {$d$};
\node  at (1.8, -1.8) {$c$};
\node  at (0.3, -1.3) {$d$};
\node  at (1.7, 0.45) {\tinymath CD};
\node  at (0.5, 0.15) {\tinymath AB};
\node  at (1.2, 0.65) {\tinymath C};
\node  at (1.45, 0) {\tinymath A};
\node  at (1.25, -0.5) {\tinymath E};
\node  at (1.6, 1.2) {\tinymath AB};
\node  at (0.9, 1.0) {\tinymath CF};
\node  at (2.6, 0.85) {\tinymath CF};
\node  at (0.5, -0.7) {\tinymath ED};
\node  at (3.0, -0.5) {\tinymath ED};
\node  at (1.7, -1.0) {\tinymath EF};
\node  at (3.1, 0.15) {\tinymath EF};
\node  at (2.2, -0.2) {$v$};
\draw[line width=1] (0.5, 1.1)--(1, 0.5)--(1.2, 0)--(1, -0.5)--(0.5, -1.1);
\draw[line width=1] (0,0)--(1.2, 0);
\draw[line width=1] (1, 0.5)--(2.5, 0);
\draw[line width=1] (1,-0.5)--(1.75, -1.6);
\draw[line width=1] (3.0, 1.1)--(2.5, 0)--(3.0, -1.1);
\draw[line width=1] (2.5,0)--(1.75, 1.6);
\draw[line width=1] (2.5,0)--(3.5, 0);
\draw[fill=white] (1, 0.5) circle[radius=0.1];
\draw[fill=white] (1.2, 0) circle[radius=0.1];
\draw[fill=white] (1, -0.5) circle[radius=0.1];
\draw[fill=white] (2.5, 0) circle[radius=0.1];
%
\draw (4.5, -0.6)--(4.5, 0.6)--(5.5, 1.6)--(7, 1.6)--(8, 0.6)--(8, -0.6)--(7, -1.6)--(5.5, -1.6)--(4.5, -0.6);
\draw[line width=1] (5, 1.1)--(5.7, 0.5)--(5.7, -0.5)--(5.3, 0)--(5, -1.1);
\draw[line width=1] (4.5,0)--(5.3, 0);
\draw[line width=1] (5.7,-0.5)--(6.25, -1.6);
\draw[line width=1] (7.5, 1.1)--(7, 0)--(7.5, -1.1);
\draw[line width=1] (7,0)--(6.25, 1.6);
\draw[line width=1] (7, 0)--(8, 0);
\draw[line width=1] (5.7, 0.5)--(7, 0);
\node  at (4.3, 0) {$a$};
\node  at (4.8, 1.3) {$b$};
\node  at (6.3, 1.8) {$a$};
\node  at (7.8, 1.3) {$b$};
\node  at (8.2, 0) {$c$};
\node  at (7.8, -1.3) {$d$};
\node  at (6.3, -1.8) {$c$};
\node  at (4.8, -1.3) {$d$};
\node  at (5.9, 0.7) {\tinymath C};
\node  at (5.95, -0.5) {\tinymath A};
\node  at (5.35, 0.25) {\tinymath E};
\node  at (6.2, -1.0) {\tinymath AB};
\node  at (4.85, 0.15) {\tinymath EF};
\node  at (4.9, -0.5) {\tinymath ED};
\node  at (6.2, 1.2) {\tinymath EF};
\node  at (5.4, 1.0) {\tinymath CF};
\node  at (7.1, 0.85) {\tinymath CF};
\node  at (7.5, -0.5) {\tinymath ED};
\node  at (7.6, 0.15) {\tinymath AB};
\node  at (6.25, 0.45) {\tinymath CD};
\node  at (6.7, -0.2) {$v$};
\draw[fill=white] (5.7, 0.5) circle[radius=0.1];
\draw[fill=white] (5.3, 0) circle[radius=0.1];
\draw[fill=white] (5.7, -0.5) circle[radius=0.1];
\draw[fill=white] (7, 0) circle[radius=0.1];
%
\draw (9, -0.6)--(9, 0.6)--(10, 1.6)--(11.5, 1.6)--(12.5, 0.6)--(12.5, -0.6)--(11.5, -1.6)--(10, -1.6)--(9, -0.6);
\draw[line width=1] (9.5, 1.1)--(10, 0.5)--(10, 0)--(10.2, -0.5);
\draw[line width=1] (10.0, 0)--(9.5, -1.1);
\draw[line width=1] (9,0)--(10, 0.5);
\draw[line width=1] (10.2, -0.5)--(10.75, -1.6);
\draw[line width=1] (10.2, -0.5)--(11.5, 0);
\draw[line width=1] (12, 1.1)--(11.5, 0)--(12, -1.1);
\draw[line width=1] (11.5, 0)--(10.75, 1.6);
\draw[line width=1] (11.5,0)--(12.5, 0);
\node  at (8.8, 0) {$a$};
\node  at (9.3, 1.3) {$b$};
\node  at (10.8, 1.8) {$a$};
\node  at (12.3, 1.3) {$b$};
\node  at (12.7, 0) {$c$};
\node  at (12.3, -1.3) {$d$};
\node  at (10.8, -1.8) {$c$};
\node  at (9.3, -1.3) {$d$};
\node  at (10.25, 0.65) {\tinymath E};
\node  at (10.2, 0.0) {\tinymath A};
\node  at (10.55, -0.55) {\tinymath C};
\node  at (10.6, 1.25) {\tinymath ED};
\node  at (11.6, 0.85) {\tinymath EF};
\node  at (12, -0.5) {\tinymath AB};
\node  at (12.1, 0.15) {\tinymath CF};
\node  at (10.75, -0.1) {\tinymath CD};
\node  at (10.7, -1.0) {\tinymath CF};
\node  at (9.4, -0.5) {\tinymath AB};
\node  at (9.9, 0.95) {\tinymath EF};
\node  at (9.4, 0.35) {\tinymath ED};
\node  at (11.3, -0.28) {$v$};
\draw[fill=white] (10, 0.5) circle[radius=0.1];
\draw[fill=white] (10, 0) circle[radius=0.1];
\draw[fill=white] (10.2, -0.5) circle[radius=0.1];
\draw[fill=white] (11.5, 0) circle[radius=0.1];
%
\draw (13.5, -0.6)--(13.5, 0.6)--(14.5, 1.6)--(16, 1.6)--(17, 0.6)--(17, -0.6)--(16, -1.6)--(14.5, -1.6)--(13.5, -0.6);
\draw[line width=1] (14, 1.1)--(14.5, 0.5)--(14.3, -0.1)--(14, -1.1);
\draw[line width=1] (13.5,0)--(14.3, -0.1);
\draw[line width=1] (14.5, 0.5)--(14.8, -0.5);
\draw[line width=1] (14.8, -0.5)--(15.25, -1.6);
\draw[line width=1] (16.5, 1.1)--(16, 0)--(16.5, -1.1);
\draw[line width=1] (16, 0)--(15.25, 1.6);
\draw[line width=1] (14.8,-0.5)--(16, 0)--(17, 0);
\node  at (13.3, 0) {$a$};
\node  at (13.8, 1.3) {$b$};
\node  at (15.3, 1.8) {$a$};
\node  at (16.8, 1.3) {$b$};
\node  at (17.2, 0) {$c$};
\node  at (16.8, -1.3) {$d$};
\node  at (15.3, -1.8) {$c$};
\node  at (13.8, -1.3) {$d$};
\node  at (14.75, 0.65) {\tinymath A};
\node  at (14.5, -0.15) {\tinymath E};
\node  at (15.05, -0.6) {\tinymath C};
\node  at (15.1, 1.25) {\tinymath ED};
\node  at (16.1, 0.85) {\tinymath AB};
\node  at (16.5, -0.5) {\tinymath EF};
\node  at (16.6, 0.15) {\tinymath CF};
\node  at (15.25, -0.1) {\tinymath CD};
\node  at (14.4, 0.95) {\tinymath AB};
\node  at (13.9, 0.15) {\tinymath ED};
\node  at (15.2, -1.0) {\tinymath CF};
\node  at (13.9, -0.5) {\tinymath EF};
\node  at (15.8, -0.28) {$v$};
\draw[fill=white] (14.5, 0.5) circle[radius=0.1];
\draw[fill=white] (14.3, -0.1) circle[radius=0.1];
\draw[fill=white] (14.8, -0.5) circle[radius=0.1];
\draw[fill=white] (16, 0) circle[radius=0.1];
\end{tikzpicture}
\caption{Case of central edge $CD$.}
\label{caseCD1}
\end{center}
\end{figure}


\begin{figure}[ht]
\begin{center}
\begin{tikzpicture}[scale=0.75, line width = 0.5]
\draw (0, -0.6)--(0, 0.6)--(1, 1.6)--(2.5, 1.6)--(3.5, 0.6)--(3.5, -0.6)--(2.5, -1.6)--(1, -1.6)--(0, -0.6);
\node  at (-0.2, 0) {$a$};
\node  at (0.3, 1.3) {$b$};
\node  at (1.8, 1.8) {$a$};
\node  at (3.3, 1.3) {$b$};
\node  at (3.7, 0) {$c$};
\node  at (3.3, -1.3) {$d$};
\node  at (1.8, -1.8) {$c$};
\node  at (0.3, -1.3) {$d$};
\node  at (1.7, 0.45) {\tinymath CD};
\node  at (0.5, 0.15) {\tinymath AB};
\node  at (1.2, 0.65) {\tinymath C};
\node  at (1.45, 0) {\tinymath A};
\node  at (1.25, -0.5) {\tinymath E};
\node  at (1.7, 1.2) {\tinymath AB};
\node  at (0.9, 1.0) {\tinymath CF};
\node  at (2.65, 0.9) {\tinymath CF};
\node  at (0.5, -0.7) {\tinymath ED};
\node  at (2.45, -0.7) {\tinymath ED};
\node  at (1.7, -1.0) {\tinymath EF};
\node  at (3.18, 0.22) {\tinymath EF};
\node  at (2.1, -0.2) {\tinymath D};
\node  at (2.15, 0.55) {\tinymath B};
\node  at (2.9, -0.13) {\tinymath F};
\draw[line width=1] (0.5, 1.1)--(1, 0.5)--(1.2, 0)--(1, -0.5)--(0.5, -1.1);
\draw[line width=1] (0,0)--(1.2, 0);
\draw[line width=1] (1, 0.5)--(2.3, 0);
\draw[line width=1] (1,-0.5)--(1.75, -1.6);
\draw[line width=1] (3.0, 1.1)--(2.9, 0.1);
\draw[line width=1] (2.3, 0)--(3.0, -1.1);
\draw[line width=1] (2.4, 0.5)--(1.75, 1.6);
\draw[line width=1] (2.9, 0.1)--(3.5, 0);
\draw[line width=1] (2.3,0)--(2.4, 0.5)--(2.9, 0.1);
\draw[fill=white] (1, 0.5) circle[radius=0.1];
\draw[fill=white] (1.2, 0) circle[radius=0.1];
\draw[fill=white] (1, -0.5) circle[radius=0.1];
\draw[fill=white] (2.4, 0.5) circle[radius=0.1];
\draw[fill=white] (2.3, 0) circle[radius=0.1];
\draw[fill=white] (2.9, 0.1) circle[radius=0.1];
%
\draw (4.5, -0.6)--(4.5, 0.6)--(5.5, 1.6)--(7, 1.6)--(8, 0.6)--(8, -0.6)--(7, -1.6)--(5.5, -1.6)--(4.5, -0.6);
\draw[line width=1] (5, 1.1)--(5.7, 0.5)--(5.7, -0.5)--(5.3, 0)--(5, -1.1);
\draw[line width=1] (4.5,0)--(5.3, 0);
\draw[line width=1] (5.7,-0.5)--(6.25, -1.6);
\draw[line width=1] (7.5, 1.1)--(7, 0.5);
\draw[line width=1] (7, -0.5)--(7.5, -1.1);
\draw[line width=1] (7,0.5)--(6.25, 1.6);
\draw[line width=1] (7.3, 0)--(8, 0);
\draw[line width=1] (5.7, 0.5)--(7, -0.5);
\draw[line width=1] (7,0.5)--(7.3, 0)--(7, -0.5);
\node  at (4.3, 0) {$a$};
\node  at (4.8, 1.3) {$b$};
\node  at (6.3, 1.8) {$a$};
\node  at (7.8, 1.3) {$b$};
\node  at (8.2, 0) {$c$};
\node  at (7.8, -1.3) {$d$};
\node  at (6.3, -1.8) {$c$};
\node  at (4.8, -1.3) {$d$};
\node  at (5.9, 0.7) {\tinymath C};
\node  at (5.95, -0.5) {\tinymath A};
\node  at (5.35, 0.25) {\tinymath E};
\node  at (6.2, -1.0) {\tinymath AB};
\node  at (4.85, 0.15) {\tinymath EF};
\node  at (4.9, -0.5) {\tinymath ED};
\node  at (6.2, 1.2) {\tinymath EF};
\node  at (5.4, 1.0) {\tinymath CF};
\node  at (7.1, 1.0) {\tinymath CF};
\node  at (7.5, -0.65) {\tinymath ED};
\node  at (7.6, 0.15) {\tinymath AB};
\node  at (6.55, 0.15) {\tinymath CD};
\node  at (7.05, 0) {\tinymath B};
\node  at (6.85, -0.7) {\tinymath D};
\node  at (6.7, 0.5) {\tinymath F};
\draw[fill=white] (5.7, 0.5) circle[radius=0.1];
\draw[fill=white] (5.3, 0) circle[radius=0.1];
\draw[fill=white] (5.7, -0.5) circle[radius=0.1];
\draw[fill=white] (7, 0.5) circle[radius=0.1];
\draw[fill=white] (7.3, 0) circle[radius=0.1];
\draw[fill=white] (7, -0.5) circle[radius=0.1];

%
\draw (9, -0.6)--(9, 0.6)--(10, 1.6)--(11.5, 1.6)--(12.5, 0.6)--(12.5, -0.6)--(11.5, -1.6)--(10, -1.6)--(9, -0.6);
\draw[line width=1] (9.5, 1.1)--(10, 0.5)--(10, 0)--(10.2, -0.5);
\draw[line width=1] (10.0, 0)--(9.5, -1.1);
\draw[line width=1] (9,0)--(10, 0.5);
\draw[line width=1] (10.2, -0.5)--(10.75, -1.6);
\draw[line width=1] (10.2, -0.5)--(11.2, 0.5);
\draw[line width=1] (12, 1.1)--(11.8, 0.2);
\draw[line width=1] (11.5, -0.5)--(12, -1.1);
\draw[line width=1] (11.2, 0.5)--(10.75, 1.6);
\draw[line width=1] (11.8, 0.2)--(12.5, 0);
\draw[line width=1] (11.2, 0.5)--(11.5, -0.5)--(11.8, 0.2);
\node  at (8.8, 0) {$a$};
\node  at (9.3, 1.3) {$b$};
\node  at (10.8, 1.8) {$a$};
\node  at (12.3, 1.3) {$b$};
\node  at (12.7, 0) {$c$};
\node  at (12.3, -1.3) {$d$};
\node  at (10.8, -1.8) {$c$};
\node  at (9.3, -1.3) {$d$};
\node  at (10.25, 0.65) {\tinymath E};
\node  at (10.2, 0.0) {\tinymath A};
\node  at (10.50, -0.55) {\tinymath C};
\node  at (10.6, 1.25) {\tinymath ED};
\node  at (11.65, 0.9) {\tinymath EF};
\node  at (12, -0.6) {\tinymath AB};
\node  at (12.18, 0.25) {\tinymath CF};
\node  at (10.65, 0.2) {\tinymath CD};
\node  at (10.7, -1.0) {\tinymath CF};
\node  at (9.4, -0.5) {\tinymath AB};
\node  at (9.8, 1.0) {\tinymath EF};
\node  at (9.4, 0.35) {\tinymath ED};
\node  at (11.2, -0.5) {\tinymath B};
\node  at (10.95, 0.53) {\tinymath D};
\node  at (11.55, 0.25) {\tinymath F};
\draw[fill=white] (10, 0.5) circle[radius=0.1];
\draw[fill=white] (10, 0) circle[radius=0.1];
\draw[fill=white] (10.2, -0.5) circle[radius=0.1];
\draw[fill=white] (11.2, 0.5) circle[radius=0.1];
\draw[fill=white] (11.5, -0.5) circle[radius=0.1];
\draw[fill=white] (11.8, 0.2) circle[radius=0.1];
%
\draw (13.5, -0.6)--(13.5, 0.6)--(14.5, 1.6)--(16, 1.6)--(17, 0.6)--(17, -0.6)--(16, -1.6)--(14.5, -1.6)--(13.5, -0.6);
\draw[line width=1] (14, 1.1)--(14.5, 0.5)--(14.3, -0.1)--(14, -1.1);
\draw[line width=1] (13.5,0)--(14.3, -0.1);
\draw[line width=1] (14.5, 0.5)--(14.8, -0.5);
\draw[line width=1] (14.8, -0.5)--(15.25, -1.6);
\draw[line width=1] (16.5, 1.1)--(16.1, 0.3)--(15.6, 0.7);
\draw[line width=1] (16.5, -0.5)--(16.5, -1.1);
\draw[line width=1] (15.6, 0.7)--(15.25, 1.6);
\draw[line width=1] (14.8,-0.5)--(15.6, 0.7);
\draw[line width=1] (16.5, -0.5)--(17, 0);
\draw[line width=1] (16.1, 0.3)--(16.5, -0.5);
\node  at (13.3, 0) {$a$};
\node  at (13.8, 1.3) {$b$};
\node  at (15.3, 1.8) {$a$};
\node  at (16.8, 1.3) {$b$};
\node  at (17.2, 0) {$c$};
\node  at (16.8, -1.3) {$d$};
\node  at (15.3, -1.8) {$c$};
\node  at (13.8, -1.3) {$d$};
\node  at (14.75, 0.65) {\tinymath A};
\node  at (14.5, -0.15) {\tinymath E};
\node  at (15.05, -0.6) {\tinymath C};
\node  at (15.1, 1.25) {\tinymath ED};
\node  at (16.1, 0.85) {\tinymath AB};
\node  at (16.2, -0.85) {\tinymath EF};
\node  at (16.65, -0.05) {\tinymath CF};
\node  at (15.25, -0.1) {\tinymath CD};
\node  at (14.45, 1) {\tinymath AB};
\node  at (13.9, 0.15) {\tinymath ED};
\node  at (15.2, -1.0) {\tinymath CF};
\node  at (13.9, -0.5) {\tinymath EF};
\node  at (15.9, 0.15) {\tinymath B};
\node  at (15.35, 0.7) {\tinymath D};
\node  at (16.25, -0.5) {\tinymath F};
\draw[fill=white] (14.5, 0.5) circle[radius=0.1];
\draw[fill=white] (14.3, -0.1) circle[radius=0.1];
\draw[fill=white] (14.8, -0.5) circle[radius=0.1];
\draw[fill=white] (15.6, 0.7) circle[radius=0.1];
\draw[fill=white] (16.1, 0.3) circle[radius=0.1];
\draw[fill=white] (16.5, -0.5) circle[radius=0.1];
\end{tikzpicture}
\caption{Completion of the case of central edge $CD$.}
\label{caseCD2}
\end{center}
\end{figure}


\begin{lemma}
\label{theta5lemma1}
There are no extensions of $\Theta_5^{\# 2}$ to $K_{3,3}$.
\vspace{2mm}

\begin{proof}
We have vertex $u$ of $\Theta_5^{\# 2}$ representing the vertex $CAE$.  The five
incident edges correspond to $AB, CD, CF, DE, EF$.  When $CAE$
is expanded to the path $[C,A,E]$,  the edges $CD, CF$ must be consecutive,
because they are both incident on $C$;
and $EF, ED$ must be consecutive, because they are both incident on $E$.
The remaining edge must be $AB$.
There are a number of ways of assigning names to the edges incident on vertex $u$
satisfing this requirement.  However, when vertex $DBF$ ($v$ of $\Theta_5^{\# 2}$ in Figure~\ref{theta5}) is expanded
to the path $[D,B,F]$, edges $CD, ED$ must be consecutive because they are
both incident on $D$;  and $CF, EF$ must be consecutive because they
are both incident on $F$. One can see that it is not possible to satisfy these
conditions, for any labelling of
the edges incident on $u$ ($CAE$).  
An example of one of the cases is illustrated in Figure~\ref{theta5_2}.
The other cases are similar. 
(All the cases can be easily considered by fixing one of the five edges to be $AB$.)
\end{proof}
\end{lemma}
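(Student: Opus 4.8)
The plan is to convert the requirement of expanding the two vertices of $\Theta_5^{\#2}$ back into $K_{3,3}$ into a set of adjacency (consecutiveness) constraints on a single cyclic order, and then to show that these constraints are mutually incompatible. The feature to exploit is that in $\Theta_5^{\#2}$ the rotations at $u$ and $v$ are \emph{identical}, both equal to $(1,2,3,4,5)$. Hence, once the labels $\{AB, CD, CF, ED, EF\}$ are assigned to the five edges, there is one well-defined cyclic adjacency relation on these labels, and it governs the splitting of \emph{both} vertices simultaneously. By Lemma~\ref{autotheta5_2} all five edges are equivalent, so one may fix $AB$ as a distinguished edge, but the argument below will not actually require this reduction.

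Next I would record the constraints. Vertex $u = CAE$ must be expanded to the path $[C,A,E]$ with $A$ in the middle (the contracted edges incident to $A$ are $CA$ and $AE$), and $v = DBF$ to the path $[D,B,F]$ with $B$ in the middle. Splitting a vertex along its rotation partitions the incident edges into consecutive arcs, one per new vertex. For $u$ this forces the two edges at $C$, namely $CD$ and $CF$, to be consecutive, and the two edges at $E$, namely $ED$ and $EF$, to be consecutive, with the single edge $AB$ (at $A$) separating the two arcs. For $v$ the same reasoning forces $CD, ED$ (the edges at $D$) to be consecutive and $CF, EF$ (the edges at $F$) to be consecutive.

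The key step is to observe that all four consecutiveness conditions refer to the \emph{same} cyclic order of five edges, and that together they are impossible. In a cyclic sequence of five elements each element has exactly two neighbours. The conditions from $u$ give $CD \sim CF$ and $ED \sim EF$, and those from $v$ give $CD \sim ED$ and $CF \sim EF$. These four adjacencies already assign to each of $CD, CF, ED, EF$ two neighbours drawn entirely from within that set, producing the closed $4$-cycle $CD \sim CF \sim EF \sim ED \sim CD$. This saturates every neighbour slot of the four edges and therefore leaves no position adjacent to $AB$, contradicting the fact that the rotation is a single $5$-cycle that must contain $AB$ together with its two neighbours. Hence no assignment of labels admits a valid simultaneous expansion, and $\Theta_5^{\#2}$ has no extension to $K_{3,3}$.

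I expect the main obstacle to be expository rather than mathematical: one must argue carefully that both expansions are read off the \emph{one} shared rotation, so that the $u$-constraints and $v$-constraints genuinely collide, and that ``consecutive in the rotation'' is exactly the condition imposed by vertex splitting at an endpoint of the expansion path. It is also worth confirming, as in Lemma~\ref{contract}, that no digon is created, so that the counting of arcs is valid. Once these points are in place, the $4$-cycle observation makes the assertion ``one can see that it is not possible'' completely rigorous, and it disposes of all labelings at once rather than checking the individual cases suggested by Figure~\ref{theta5_2}.
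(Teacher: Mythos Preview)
Your proof is correct and follows the same underlying approach as the paper: both derive the four consecutiveness constraints ($CD\sim CF$, $ED\sim EF$ at $u$; $CD\sim ED$, $CF\sim EF$ at $v$) that come from splitting the two vertices into the paths $[C,A,E]$ and $[D,B,F]$.

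Where you go further is in the final step. The paper simply asserts that ``one can see that it is not possible to satisfy these conditions'' and points to a figure plus a case check (fix which edge is $AB$). You instead make explicit the crucial feature of $\Theta_5^{\#2}$---that the rotations at $u$ and $v$ are the \emph{same} cyclic order $(1,2,3,4,5)$---so that all four adjacency constraints live on a single $5$-cycle. Your observation that these constraints force the closed $4$-cycle $CD\text{--}CF\text{--}EF\text{--}ED\text{--}CD$, saturating every neighbour slot and stranding $AB$, turns the paper's case analysis into a one-line contradiction. This is a genuine sharpening: it handles all labelings at once and makes transparent exactly why $\Theta_5^{\#2}$ is the embedding that fails (its symmetry is too high), whereas the paper's argument would read the same for any embedding and only distinguishes $\Theta_5^{\#2}$ through the outcome of the case check.
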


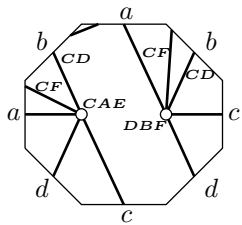
\begin{figure}[ht]
\begin{center}
\begin{tikzpicture}[scale=0.75, line width = 0.5]
\draw (4.5, -0.6)--(4.5, 0.6)--(5.5, 1.6)--(7, 1.6)--(8, 0.6)--(8, -0.6)--(7, -1.6)--(5.5, -1.6)--(4.5, -0.6);
\node  at (4.3, 0) {$a$};
\node  at (4.8, 1.3) {$b$};
\node  at (6.3, 1.8) {$a$};
\node  at (7.8, 1.3) {$b$};
\node  at (8.2, 0) {$c$};
\node  at (7.8, -1.3) {$d$};
\node  at (6.3, -1.8) {$c$};
\node  at (4.8, -1.3) {$d$};
\node  at (5.9, 0.2) {\tinymath CAE};
\node  at (6.6, -0.2) {\tinymath DBF};
\node  at (5.4, 1) {\tinymath CD};
\node  at (4.9, 0.5) {\tinymath CF};
\node  at (7.6, 0.7) {\tinymath CD};
\node  at (6.8, 1.1) {\tinymath CF};
\draw[line width=1] (4.5,0)--(5.5, 0)--(4.5, 0.5);
\draw[line width=1] (8,0)--(7, 0)--(7.1, 1.5);
\draw[line width=1] (5, 1.1)--(5.5, 0)--(5, -1.1);
\draw[line width=1] (5.5,0)--(6.25, -1.6);
\draw[line width=1] (7.5, 1.1)--(7, 0)--(7.5, -1.1);
\draw[line width=1] (7,0)--(6.25, 1.6);
\draw[line width=1] (5.8, 1.6)--(5.3, 1.4);
\draw[fill=white] (5.5, 0) circle[radius=0.1];
\draw[fill=white] (7, 0) circle[radius=0.1];
\end{tikzpicture}
\caption{There are no extensions of $\Theta_5^{\#2}$ to $K_{3,3}$.}
\label{theta5_2}
\end{center}
\end{figure}

\begin{lemma}
\label{theta5lemma3}
Up to isomorphism, there are at most three extensions of $\Theta_5^{\#3}$ to $K_{3,3}$.
\vspace{2mm}

\begin{proof}
Without loss of generality, the central edge of $\Theta_5^{\#3}$
can be taken as one of $CD$ or $AB$.\\

\noindent
{\bf Case} $CD$.

Amongst the edges incident on vertex $CAE$ in Figure~\ref{theta5_3case1},
edges $CD$ and $CF$ must be consecutive.  This gives two choices for $CF$,
which are shown in Figure~\ref{theta5_3case1} (on the left).
Also, edges $ED$ and $EF$ must be consecutive.  The remaining edge
must be $AB$.   However, at vertex $DBF$,
edges $DC$ and $DE$ must be consecutive, as must $FC$ and $FE$.
This forces the rest of the labelling of 
the edges, which then extends to an embedding of $K_{3,3}$
in two possible ways.

\begin{figure}[ht]
\begin{center}
\begin{tikzpicture}[scale=0.70, line width = 0.5]
\draw (9, -0.6)--(9, 0.6)--(10, 1.6)--(11.5, 1.6)--(12.5, 0.6)--(12.5, -0.6)--(11.5, -1.6)--(10, -1.6)--(9, -0.6);
\node  at (8.8, 0) {$a$};
\node  at (9.3, 1.3) {$b$};
\node  at (10.8, 1.8) {$a$};
\node  at (12.3, 1.3) {$b$};
\node  at (12.7, 0) {$c$};
\node  at (12.3, -1.3) {$d$};
\node  at (10.8, -1.8) {$c$};
\node  at (9.3, -1.3) {$d$};
\node  at (9.77, 0.21) {\tinymath CAE};
\node  at (11.6, -0.19) {\tinymath DBF};
\node  at (10.75, 0.13) {\tinymath CD};
\node  at (9.9, 0.83) {\tinymath CF};
\node  at (12.05, 0.65) {\tinymath CF};
\node  at (9.32, -0.13) {\tinymath AB};
\node  at (10.57, 1.3) {\tinymath AB};
%
\draw[line width=1] (9,0)--(10, 0);
\draw[line width=1] (12.5,0)--(11.5, 0)--(11.3, 1.6);
\draw[line width=1] (9.5, 1.1)--(10, 0);
\draw[line width=1] (10,0)--(10.75, -1.6);
\draw[line width=1] (12, 1.1)--(11.5, 0);
\draw[line width=1] (11.5, 0)--(10, 0)--(12.3, -0.8);
\draw[line width=1] (11.5,0)--(10.75, 1.6);
\draw[line width=1] (9.0, -0.3)--(9.25, -0.85);
\draw[fill=white] (10, 0) circle[radius=0.1];
\draw[fill=white] (11.5, 0) circle[radius=0.1];
\node  at (13.75, 0) {$\longrightarrow$};
\draw (15, -0.6)--(15, 0.6)--(16, 1.6)--(17.5, 1.6)--(18.5, 0.6)--(18.5, -0.6)--(17.5, -1.6)--(16, -1.6)--(15, -0.6);
\coordinate (u1) at (16.2, 0.4);
\coordinate (u2) at (16, 0);
\coordinate (u3) at (16.2, -0.4);
\coordinate (v1) at (17.5, 0);
\coordinate (v2) at (17.3, 0.4);
\coordinate (v3) at (17.8, 0.4);
\node  at (14.8, 0) {$a$};
\node  at (15.3, 1.3) {$b$};
\node  at (16.8, 1.8) {$a$};
\node  at (18.3, 1.3) {$b$};
\node  at (18.7, 0) {$c$};
\node  at (18.3, -1.3) {$d$};
\node  at (16.8, -1.8) {$c$};
\node  at (15.3, -1.3) {$d$};
\node  at (16.23, 0) {\tinymath A};
\node  at (16.4, 0.55) {\tinymath C};
\node  at (17.45, -0.22) {\tinymath D};
\node  at (15.97, -0.55) {\tinymath E};
\node  at (16.87, 0.01) {\tinymath CD};
\node  at (17.06, 0.49) {\tinymath B};
\node  at (17.99, 0.33) {\tinymath F};
\node  at (18.19, -0.13) {\tinymath ED};
\node  at (17.68, 1.38) {\tinymath EF};
\node  at (17.82, -0.89) {\tinymath EF};
\node  at (15.48, 0.73) {\tinymath CF};
\node  at (18.19, 0.84) {\tinymath CF};
\node  at (16.85, -1.1) {\tinymath ED};
\node  at (15.37, -0.13) {\tinymath AB};
\node  at (16.57, 1.3) {\tinymath AB};
\draw[line width=1] (u1)--(u2)--(u3);
\draw[line width=1] (v1)--(v2)--(v3);
\draw[line width=1] (15,0)--(u2);
\draw[line width=1] (18.5,0)--(v1);
\draw[line width=1] (v3)--(17.3, 1.6);
\draw[line width=1] (15.5, 1.1)--(u1);
\draw[line width=1] (u3)--(16.75, -1.6);
\draw[line width=1] (18, 1.1)--(v3);
\draw[line width=1] (17.5, 0)--(u1);
\draw[line width=1] (u3)--(18.3, -0.8);
\draw[line width=1] (17.5,0)--(16.75, 1.6);
\draw[line width=1] (15.0, -0.3)--(15.25, -0.85);
%
\draw[fill=white] (u1) circle[radius=0.1];
\draw[fill=white] (u2) circle[radius=0.1];
\draw[fill=white] (u3) circle[radius=0.1];
\draw[fill=white] (v1) circle[radius=0.1];
\draw[fill=white] (v2) circle[radius=0.1];
\draw[fill=white] (v3) circle[radius=0.1];
%
\draw (9, -5.1)--(9, -3.9)--(10, -2.9)--(11.5, -2.9)--(12.5, -3.9)--(12.5, -5.1)--(11.5, -6.1)--(10, -6.1)--(9, -5.1);
\node  at (8.8, -4.5) {$a$};
\node  at (9.3, -3.2) {$b$};
\node  at (10.8, -2.7) {$a$};
\node  at (12.3, -3.2) {$b$};
\node  at (12.7, -4.5) {$c$};
\node  at (12.3, -5.8) {$d$};
\node  at (10.8, -6.3) {$c$};
\node  at (9.3, -5.8) {$d$};
\node  at (9.58, -4.7) {\tinymath CAE};
\node  at (11.6, -4.7) {\tinymath DBF};
\node  at (10.7, -4.35) {\tinymath CD};
\node  at (11.1, -5.1) {\tinymath CF};
\node  at (11.62, -3.4) {\tinymath CF};
\node  at (12.12, -4.35) {\tinymath AB};
\node  at (10.87, -5.65) {\tinymath AB};
%
\draw[line width=1] (9,-4.5)--(10, -4.5);
\draw[line width=1] (12.5,-4.5)--(11.5, -4.5)--(11.3, -2.9);
\draw[line width=1] (9.5, -3.4)--(10, -4.5);
\draw[line width=1] (10,-4.5)--(10.75, -6.1);
\draw[line width=1] (12, -3.4)--(11.5, -4.5);
\draw[line width=1] (11.5, -4.5)--(10, -4.5)--(12.3, -5.3);
\draw[line width=1] (11.5,-4.5)--(10.75, -2.9);
\draw[line width=1] (9.0, -4.8)--(9.25, -5.35);
\draw[fill=white] (10, -4.5) circle[radius=0.1];
\draw[fill=white] (11.5, -4.5) circle[radius=0.1];
\node  at (13.75, -4.5) {$\longrightarrow$};
\draw (15, -5.1)--(15, -3.9)--(16, -2.9)--(17.5, -2.9)--(18.5, -3.9)--(18.5, -5.1)--(17.5, -6.1)--(16, -6.1)--(15, -5.1);
\coordinate (u1) at (16.7, -4.6);
\coordinate (u2) at (16.2, -4.5);
\coordinate (u3) at (16.0, -4.1);
\coordinate (v1) at (17.5, -4.5);
\coordinate (v2) at (17.8, -4.5);
\coordinate (v3) at (17.8, -4.1);
\node  at (14.8, -4.5) {$a$};
\node  at (15.3, -3.2) {$b$};
\node  at (16.8, -2.7) {$a$};
\node  at (18.3, -3.2) {$b$};
\node  at (18.7, -4.5) {$c$};
\node  at (18.3, -5.8) {$d$};
\node  at (16.8, -6.3) {$c$};
\node  at (15.3, -5.8) {$d$};
\node  at (16.7, -4.82) {\tinymath C};
\node  at (16.23, -3.97) {\tinymath E};
\node  at (17.49, -4.72) {\tinymath D};
\node  at (16.05, -4.67) {\tinymath A};
\node  at (17.07, -4.41) {\tinymath CD};
\node  at (17.97, -4.66) {\tinymath B};
\node  at (17.55, -4.05) {\tinymath F};
\node  at (18.21, -4.37) {\tinymath AB};
\node  at (17.7, -3.2) {\tinymath CF};
\node  at (17.7, -5.28) {\tinymath CF};
\node  at (15.42, -3.79) {\tinymath EF};
\node  at (18.13, -3.78) {\tinymath EF};
\node  at (16.9, -5.59) {\tinymath AB};
\node  at (15.45, -4.52) {\tinymath ED};
\node  at (16.56, -3.17) {\tinymath ED};
\draw[line width=1] (u1)--(u2)--(u3);
\draw[line width=1] (v1)--(v2)--(v3);
\draw[line width=1] (15,-4.5)--(u3);
\draw[line width=1] (18.5,-4.5)--(v1);
\draw[line width=1] (v3)--(17.3, -2.9);
\draw[line width=1] (15.5, -3.4)--(u3);
\draw[line width=1] (u2)--(16.75, -6.1);
\draw[line width=1] (18, -3.4)--(v3);
\draw[line width=1] (17.5, -4.5)--(u1);
\draw[line width=1] (u1)--(18.3, -5.3);
\draw[line width=1] (17.5,-4.5)--(16.75, -2.9);
\draw[line width=1] (15.0, -4.8)--(15.25, -5.35);
%
\draw[fill=white] (u1) circle[radius=0.1];
\draw[fill=white] (u2) circle[radius=0.1];
\draw[fill=white] (u3) circle[radius=0.1];
\draw[fill=white] (v1) circle[radius=0.1];
\draw[fill=white] (v2) circle[radius=0.1];
\draw[fill=white] (v3) circle[radius=0.1];
\end{tikzpicture}
\caption{$\Theta_5^{\#3}$ with central edge $CD$.}
\label{theta5_3case1}
\end{center}
\end{figure}
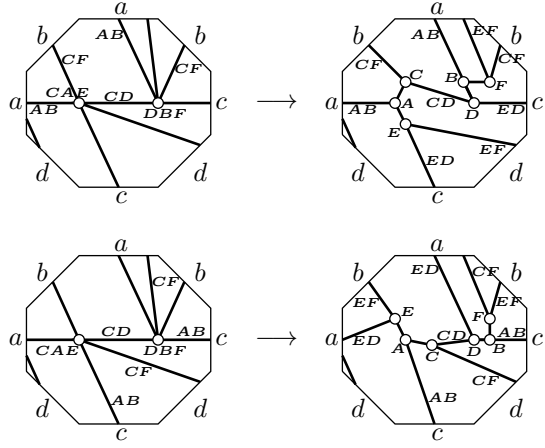

%
\medbreak
\noindent
{\bf Case} $AB$.

Refer to Figures~\ref{theta5_3case3} and~\ref{caseAB22}.  Edges $EF$ and $ED$ must be consecutive at
vertex $CAE$, and also edges $CF$ and $CD$.  At vertex $DBF$, edges $CF$ and $EF$
must be consecutive, as must edges $CD$ and $ED$. This
leads to four extensions to $K_{3,3}$.  However, it is easy to see from the diagram in Figure~\ref{caseAB22}
that all four are isomorphic, giving only one extension with central edge $AB$.
\end{proof}
\end{lemma}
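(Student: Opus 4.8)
The plan is to undo the contraction: I would expand the vertex $u=CAE$ into the path $[C,A,E]$ and the vertex $v=DBF$ into the path $[D,B,F]$, distributing the five incident edges in a way compatible with the rotation system of $\Theta_5^{\#3}$ (with $u:(1,2,3,4,5)$ and the rotation of $v$ as listed after Figure~\ref{theta5}). The governing conditions are purely combinatorial: at $u$ the edges $CD,CF$ are both incident on $C$ and $ED,EF$ are both incident on $E$, so in the cyclic rotation the pair $\{CD,CF\}$ must occupy two consecutive slots, the pair $\{ED,EF\}$ two consecutive slots, and $AB$ the single remaining slot, with $AB$ separating the two pairs because $A$ is the interior vertex of the path. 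Symmetrically, at $v$ the pairs $\{CD,ED\}$ and $\{CF,EF\}$ must each be consecutive, with $AB$ between them. The first move is a symmetry reduction: by Lemma~\ref{autoK33} the automorphisms of $K_{3,3}$ fixing the bold subgraph act on the five edges with exactly two orbits, namely $\{AB\}$ and $\{CD,CF,ED,EF\}$, so without loss of generality the central (non-crossing) edge $uv$ of $\Theta_5^{\#3}$ is either $CD$ or $AB$. Since $u$ and $v$ are inequivalent in $\Theta_5^{\#3}$ by Corollary~\ref{autotheta5}, the embedding supplies no symmetry exchanging the two path-expansions, and the two cases must be followed through directly.

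In the case where the central edge is $CD$, declaring $CD$ to be the non-crossing edge pins $C$ to the $u$-corner and $D$ to the $v$-corner. I would then use the consecutiveness requirement to place $CF$ adjacent to $CD$ at $C$: there are two choices, and for each the positions of $ED,EF$ at $E$ and of $AB$ at $A$ are forced by the path structure, as in Figure~\ref{theta5_3case1}. Checking each candidate against the $v$-side constraints (that $\{CD,ED\}$ and $\{CF,EF\}$ be consecutive at $D$ and $F$) shows that both survive, and in each surviving configuration the expansion of $v$ into $[D,B,F]$ is uniquely determined. This yields exactly two extensions.

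In the case where the central edge is $AB$, the requirements that $\{EF,ED\}$ and $\{CF,CD\}$ be consecutive at $E$ and $C$ on the $u$-side, and that $\{CF,EF\}$ and $\{CD,ED\}$ be consecutive at $F$ and $D$ on the $v$-side, leave four combinatorial completions. The decisive step is to show these four are pairwise isomorphic: I would exhibit an explicit identifying symmetry of the configuration, realized as an automorphism of $K_{3,3}$ composed with a symmetry of $\Theta_5^{\#3}$ (read off the octagon diagram), collapsing the four completions to one. Summing the two cases then gives at most $2+1=3$ extensions up to isomorphism, which is the claimed bound.

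The main obstacle is the bookkeeping in the case $AB$: establishing that the four formally distinct completions collapse to a single isomorphism class. Unlike the $CD$ case, where the $v$-side constraints immediately discard configurations, here all four satisfy every consecutiveness requirement and differ only in how the consecutive pairs are oriented, so the work is to produce the identifying isomorphisms explicitly and to verify that each respects both the rotation system and the $K_{3,3}$ edge labels simultaneously. A secondary point requiring care is the orbit computation underpinning the reduction to central edge $CD$ or $AB$: one must check against the generators $(CE)$, $(DF)$, and $(AB)(CD)(EF)$ of Lemma~\ref{autoK33} that $\{CD,CF,ED,EF\}$ really forms one orbit, so that these two representatives genuinely exhaust all cases.
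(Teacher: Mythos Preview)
Your proposal is correct and follows essentially the same approach as the paper: reduce the central edge to $AB$ or $CD$ via Lemma~\ref{autoK33}, use the consecutiveness constraints at $CAE$ and $DBF$ to get two extensions in the $CD$ case and four in the $AB$ case, then collapse the four $AB$-case extensions to one by symmetry. The paper handles the final collapse by direct inspection of the four drawings in Figure~\ref{caseAB22} rather than by naming an explicit automorphism, but the logic is the same.
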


\begin{figure}[ht]
\begin{center}
\begin{tikzpicture}[scale=0.70, line width = 0.5]
\draw (9, -0.6)--(9, 0.6)--(10, 1.6)--(11.5, 1.6)--(12.5, 0.6)--(12.5, -0.6)--(11.5, -1.6)--(10, -1.6)--(9, -0.6);
\node  at (8.8, 0) {$a$};
\node  at (9.3, 1.3) {$b$};
\node  at (10.8, 1.8) {$a$};
\node  at (12.3, 1.3) {$b$};
\node  at (12.7, 0) {$c$};
\node  at (12.3, -1.3) {$d$};
\node  at (10.8, -1.8) {$c$};
\node  at (9.3, -1.3) {$d$};
\node  at (9.6, -0.22) {\tinymath CAE};
\node  at (11.59, -0.21) {\tinymath DBF};
\node  at (10.8, 0.15) {\tinymath AB};
%
\draw[line width=1] (9,0)--(10, 0);
\draw[line width=1] (12.5,0)--(11.5, 0)--(11.3, 1.6);
\draw[line width=1] (9.5, 1.1)--(10, 0);
\draw[line width=1] (10,0)--(10.75, -1.6);
\draw[line width=1] (12, 1.1)--(11.5, 0);
\draw[line width=1] (11.5, 0)--(10, 0)--(12.3, -0.8);
\draw[line width=1] (11.5,0)--(10.75, 1.6);
\draw[line width=1] (9.0, -0.3)--(9.25, -0.85);
\draw[fill=white] (10, 0) circle[radius=0.1];
\draw[fill=white] (11.5, 0) circle[radius=0.1];
\end{tikzpicture}
\caption{$\Theta_5^{\# 3}$ with central edge $AB$.}
\label{theta5_3case3}
\end{center}
\end{figure}
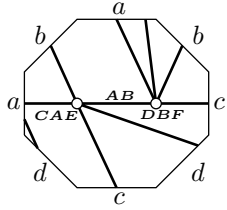


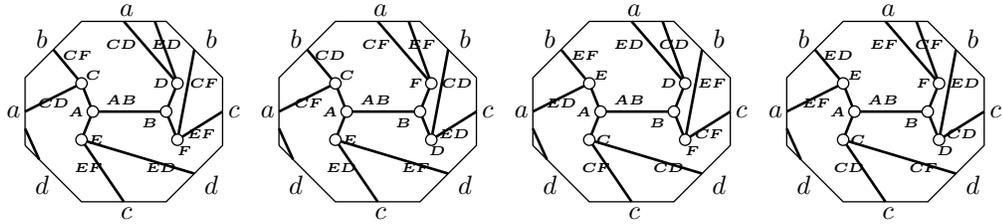
\begin{figure}[ht]
\begin{center}
\begin{tikzpicture}[scale=0.75, line width = 0.5]
\draw (0, -0.6)--(0, 0.6)--(1, 1.6)--(2.5, 1.6)--(3.5, 0.6)--(3.5, -0.6)--(2.5, -1.6)--(1, -1.6)--(0, -0.6);
\node  at (-0.2, 0) {$a$};
\node  at (0.3, 1.3) {$b$};
\node  at (1.8, 1.8) {$a$};
\node  at (3.3, 1.3) {$b$};
\node  at (3.7, 0) {$c$};
\node  at (3.3, -1.3) {$d$};
\node  at (1.8, -1.8) {$c$};
\node  at (0.3, -1.3) {$d$};
\node  at (1.7, 0.2) {\tinymath AB};
\node  at (0.5, 0.15) {\tinymath CD};
\node  at (1.2, 0.65) {\tinymath C};
\node  at (0.9, 0) {\tinymath A};
\node  at (1.25, -0.5) {\tinymath E};
\node  at (1.7, 1.2) {\tinymath CD};
\node  at (2.5, 1.2) {\tinymath ED};
\node  at (0.9, 1.0) {\tinymath CF};
\node  at (3.15, 0.5) {\tinymath CF};
\node  at (2.4, -1.0) {\tinymath ED};
\node  at (2.8, -0.7) {\tinymath F};
\node  at (1.1, -1.0) {\tinymath EF};
\node  at (3.1, -0.4) {\tinymath EF};
\node  at (2.2, -0.2) {\tinymath B};
\node  at (2.4, 0.5) {\tinymath D};
\draw[line width=1] (0.5, 1.1)--(1, 0.5)--(1.2, 0)--(1, -0.5);
\draw[line width=1] (0,0)--(1, 0.5);
\draw[line width=1] (1.2, 0)--(2.5, 0);
\draw[line width=1] (1,-0.5)--(1.75, -1.6);
\draw[line width=1] (3.0, 1.1)--(2.7, -0.5);
\draw[line width=1] (2.7,0.5)--(1.75, 1.6);
\draw[line width=1] (2.7,-0.5)--(3.5, 0);
\draw[line width=1] (2.7, 0.5)--(2.5, 0)--(2.7, -0.5);
\draw[line width=1] (1, -0.5)--(3.0, -1.1);
\draw[line width=1] (2.7, 0.5)--(2.3, 1.6);
\draw[line width=1] (0, -0.3)--(0.25, -0.85);
\draw[fill=white] (1, 0.5) circle[radius=0.1];
\draw[fill=white] (1.2, 0) circle[radius=0.1];
\draw[fill=white] (1, -0.5) circle[radius=0.1];
\draw[fill=white] (2.5, 0) circle[radius=0.1];
\draw[fill=white] (2.7, 0.5) circle[radius=0.1];
\draw[fill=white] (2.7, -0.5) circle[radius=0.1];
%
\draw (4.5, -0.6)--(4.5, 0.6)--(5.5, 1.6)--(7, 1.6)--(8, 0.6)--(8, -0.6)--(7, -1.6)--(5.5, -1.6)--(4.5, -0.6);
\draw[line width=1] (5.5, 0.5)--(5.7, 0)--(5.5, -0.5);
\draw[line width=1] (5, 1.1)--(5.5, 0.5);
\draw[line width=1] (4.5,0)--(5.5, 0.5);
\draw[line width=1] (7.5, 1.1)--(7.2, -0.5);
\draw[line width=1] (7.2,0.5)--(6.25, 1.6);
\draw[line width=1] (7, 0)--(5.7, 0);
\draw[line width=1] (7.2,-0.5)--(8.0, 0);
\draw[line width=1] (7.2, 0.5)--(7.0, 0)--(7.2, -0.5);
\draw[line width=1] (5.5,-0.5)--(6.25, -1.6);
\draw[line width=1] (5.5, -0.5)--(7.5, -1.1);
\draw[line width=1] (7.2, 0.5)--(6.8, 1.6);
\draw[line width=1] (4.5, -0.3)--(4.75, -0.85);
\node  at (4.3, 0) {$a$};
\node  at (4.8, 1.3) {$b$};
\node  at (6.3, 1.8) {$a$};
\node  at (7.8, 1.3) {$b$};
\node  at (8.2, 0) {$c$};
\node  at (7.8, -1.3) {$d$};
\node  at (6.3, -1.8) {$c$};
\node  at (4.8, -1.3) {$d$};
\node  at (6.2, 0.2) {\tinymath AB};
\node  at (5, 0.15) {\tinymath CF};
\node  at (5.7, 0.65) {\tinymath C};
\node  at (5.4, 0) {\tinymath A};
\node  at (5.75, -0.5) {\tinymath E};
\node  at (6.2, 1.2) {\tinymath CF};
\node  at (7, 1.2) {\tinymath EF};
\node  at (5.4, 1.0) {\tinymath CD};
\node  at (7.65, 0.5) {\tinymath CD};
\node  at (6.9, -1.0) {\tinymath EF};
\node  at (7.3, -0.7) {\tinymath D};
\node  at (5.6, -1.0) {\tinymath ED};
\node  at (7.6, -0.4) {\tinymath ED};
\node  at (6.7, -0.2) {\tinymath B};
\node  at (6.9, 0.5) {\tinymath F};
\draw[fill=white] (5.5, 0.5) circle[radius=0.1];
\draw[fill=white] (5.7, 0) circle[radius=0.1];
\draw[fill=white] (5.5, -0.5) circle[radius=0.1];
\draw[fill=white] (7.0, 0) circle[radius=0.1];
\draw[fill=white] (7.2, 0.5) circle[radius=0.1];
\draw[fill=white] (7.2, -0.5) circle[radius=0.1];
%
\draw (9, -0.6)--(9, 0.6)--(10, 1.6)--(11.5, 1.6)--(12.5, 0.6)--(12.5, -0.6)--(11.5, -1.6)--(10, -1.6)--(9, -0.6);
\draw[line width=1] (10.0, 0.5)--(10.2, 0)--(10.0, -0.5);
\draw[line width=1] (9.5, 1.1)--(10, 0.5);
\draw[line width=1] (9,0)--(10, 0.5);
\draw[line width=1] (10.2, 0)--(11.5, 0);
\draw[line width=1] (12, 1.1)--(11.7, -0.5);
\draw[line width=1] (11.7,0.5)--(10.75, 1.6);
\draw[line width=1] (11.7,-0.5)--(12.5, 0);
\draw[line width=1] (11.7, 0.5)--(11.5, 0)--(11.7, -0.5);
\draw[line width=1] (10.0, -0.5)--(12.0, -1.1);
\draw[line width=1] (10.0,-0.5)--(10.75, -1.6);
\draw[line width=1] (11.7, 0.5)--(11.3, 1.6);
\draw[line width=1] (9, -0.3)--(9.25, -0.85);
\node  at (8.8, 0) {$a$};
\node  at (9.3, 1.3) {$b$};
\node  at (10.8, 1.8) {$a$};
\node  at (12.3, 1.3) {$b$};
\node  at (12.7, 0) {$c$};
\node  at (12.3, -1.3) {$d$};
\node  at (10.8, -1.8) {$c$};
\node  at (9.3, -1.3) {$d$};
\node  at (10.7, 0.2) {\tinymath AB};
\node  at (9.5, 0.15) {\tinymath ED};
\node  at (10.2, 0.65) {\tinymath E};
\node  at (9.9, 0) {\tinymath A};
\node  at (10.25, -0.5) {\tinymath C};
\node  at (10.7, 1.2) {\tinymath ED};
\node  at (11.5, 1.2) {\tinymath CD};
\node  at (9.9, 1.0) {\tinymath EF};
\node  at (12.15, 0.5) {\tinymath EF};
\node  at (11.4, -1.0) {\tinymath CD};
\node  at (11.8, -0.7) {\tinymath F};
\node  at (10.1, -1.0) {\tinymath CF};
\node  at (12.1, -0.4) {\tinymath CF};
\node  at (11.2, -0.2) {\tinymath B};
\node  at (11.4, 0.5) {\tinymath D};
\draw[fill=white] (10.0, 0.5) circle[radius=0.1];
\draw[fill=white] (10.2, 0) circle[radius=0.1];
\draw[fill=white] (10.0, -0.5) circle[radius=0.1];
\draw[fill=white] (11.5, 0) circle[radius=0.1];
\draw[fill=white] (11.7, 0.5) circle[radius=0.1];
\draw[fill=white] (11.7, -0.5) circle[radius=0.1];
%
\draw (13.5, -0.6)--(13.5, 0.6)--(14.5, 1.6)--(16, 1.6)--(17, 0.6)--(17, -0.6)--(16, -1.6)--(14.5, -1.6)--(13.5, -0.6);
\draw[line width=1] (14.5, 0.5)--(14.7, 0)--(14.5, -0.5);
\draw[line width=1] (14, 1.1)--(14.5, 0.5);
\draw[line width=1] (13.5,0)--(14.5, 0.5);
\draw[line width=1] (16, 0)--(14.7, 0);
\draw[line width=1] (16.2,0.5)--(15.25, 1.6);
\draw[line width=1] (16.2,-0.5)--(17, 0);
\draw[line width=1] (16.2, 0.5)--(16.0, 0)--(16.2, -0.5);
\draw[line width=1] (14.5, -0.5)--(16.5, -1.1);
\draw[line width=1] (14.5,-0.5)--(15.25, -1.6);
\draw[line width=1] (16.2, 0.5)--(15.8, 1.6);
\draw[line width=1] (13.5, -0.3)--(13.75, -0.85);
\draw[line width=1] (16.5, 1.1)--(16.2, -0.5);
\node  at (13.3, 0) {$a$};
\node  at (13.8, 1.3) {$b$};
\node  at (15.3, 1.8) {$a$};
\node  at (16.8, 1.3) {$b$};
\node  at (17.2, 0) {$c$};
\node  at (16.8, -1.3) {$d$};
\node  at (15.3, -1.8) {$c$};
\node  at (13.8, -1.3) {$d$};
\node  at (15.2, 0.2) {\tinymath AB};
\node  at (14, 0.15) {\tinymath EF};
\node  at (14.7, 0.65) {\tinymath E};
\node  at (14.4, 0) {\tinymath A};
\node  at (14.75, -0.5) {\tinymath C};
\node  at (15.2, 1.2) {\tinymath EF};
\node  at (16, 1.2) {\tinymath CF};
\node  at (14.4, 1.0) {\tinymath ED};
\node  at (16.65, 0.5) {\tinymath ED};
\node  at (15.9, -1.0) {\tinymath CF};
\node  at (16.3, -0.7) {\tinymath D};
\node  at (14.6, -1.0) {\tinymath CD};
\node  at (16.6, -0.4) {\tinymath CD};
\node  at (15.7, -0.2) {\tinymath B};
\node  at (15.9, 0.5) {\tinymath F};
%
\draw[fill=white] (14.5, 0.5) circle[radius=0.1];
\draw[fill=white] (14.7, 0) circle[radius=0.1];
\draw[fill=white] (14.5, -0.5) circle[radius=0.1];
\draw[fill=white] (16.0, 0) circle[radius=0.1];
\draw[fill=white] (16.2, 0.5) circle[radius=0.1];
\draw[fill=white] (16.2, -0.5) circle[radius=0.1];
\end{tikzpicture}
\caption{Extensions of $\Theta_5^{\#3}$ to $K_{3,3}$  with central edge $AB$.}
\label{caseAB22}
\end{center}
\end{figure}



\begin{theorem}
\label{K33theorem}
Up to isomorphism, there is a unique 2-cell embedding of $K_{3,3}$ on the double torus.
\end{theorem}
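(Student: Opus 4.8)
The plan is to assemble every candidate $2$-cell embedding of $K_{3,3}$ that the preceding lemmas allow, bound their number, and then show that, although they arise from different $\Theta_5$ embeddings and different choices of central edge, they all collapse into a single isomorphism class. The skeleton of the argument is already in place: by Lemma~\ref{contract} and the remark following Figure~\ref{K33}, contracting the four bold edges of any $2$-cell embedding of $K_{3,3}$ on the double torus produces a $2$-cell embedding of $\Theta_5$, which by Theorem~\ref{theta5thm} is one of $\Theta_5^{\#1},\Theta_5^{\#2},\Theta_5^{\#3}$. Hence every embedding of $K_{3,3}$ is obtained by expanding one of these three along the paths $[C,A,E]$ and $[D,B,F]$, and by Lemma~\ref{autoK33} I may assume $u=CAE$, $v=DBF$, with the central edge (for $\#1$ and $\#3$, which have one) labelled $AB$ or $CD$.

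First I would collect the surviving extensions directly from the enumeration lemmas. Lemma~\ref{theta5lemma1} rules out $\Theta_5^{\#2}$ altogether; Lemma~\ref{theta5lemma2} rules out $\Theta_5^{\#1}$ with central edge $AB$; Lemma~\ref{CDlemma} yields exactly four extensions of $\Theta_5^{\#1}$ with central edge $CD$; and Lemma~\ref{theta5lemma3} yields at most three extensions of $\Theta_5^{\#3}$ (two with central edge $CD$, one with central edge $AB$). Thus there are at most seven candidate embeddings in all. Since each has a single face, each is determined by its rotation system, equivalently by its facial boundary walk, which is a closed walk of length $18$ in which each of the nine edges of $K_{3,3}$ is traversed exactly twice.

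The decisive step is to prove these seven candidates pairwise isomorphic. For each I would read off the vertex rotations from Figures~\ref{caseCD2}, \ref{theta5_3case1}, and~\ref{caseAB22} (equivalently, trace out the length-$18$ facial walk), fix one of them as a reference, and then exhibit for every other candidate an explicit permutation of $V(K_{3,3})$ together with the permutation it induces on $E(K_{3,3})$ that carries its rotation system onto the reference. A useful preliminary would be to compute a cheap invariant of the facial walk (for instance, the cyclic pattern of gaps between the two occurrences of each edge) to organise the matching and to confirm quickly that no two candidates are genuinely distinct before hunting for the explicit permutations.

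The main obstacle, and the place where real care is required, is that an isomorphism between an extension of $\Theta_5^{\#1}$ and an extension of $\Theta_5^{\#3}$ need \emph{not} preserve the bold subgraph of Figure~\ref{K33}; such an isomorphism therefore does not come from Lemma~\ref{autoK33} and cannot be inferred from the contraction step, so it must be produced by hand, inside the full automorphism group of $K_{3,3}$ (of order $72$). Matching the two families of length-$18$ facial walks under this larger group is the crux of the proof. Once all seven rotation systems have been identified with a single one, uniqueness follows; finally, reversing all the rotations and displaying the reversed embedding as isomorphic to the original (exactly as in the $\Theta_5$ lemmas) shows that this unique embedding is \emph{non-orientable}, in agreement with the enumerative count.
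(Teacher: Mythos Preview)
Your proposal is correct and follows the same overall architecture as the paper: collect the at most seven surviving extensions from Lemmas~\ref{theta5lemma2}, \ref{CDlemma}, \ref{theta5lemma1}, and~\ref{theta5lemma3}, then verify they are all isomorphic as embeddings. The only substantive difference is in that last verification step. The paper converts each candidate to its medial digraph and invokes graph-isomorphism software, whereas you propose to exhibit explicit automorphisms of $K_{3,3}$ carrying each rotation system onto a fixed reference (aided by a facial-walk gap invariant). Your route is more self-contained and would yield a software-free proof, at the cost of some case-by-case bookkeeping; the paper's route is quicker but relies on a computer check. Your observation that the cross-family isomorphisms (between extensions of $\Theta_5^{\#1}$ and of $\Theta_5^{\#3}$) need not preserve the bold subgraph, and hence must be sought in the full order-$72$ group $\aut{K_{3,3}}$, is exactly the right diagnosis of where the work lies.
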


\begin{proof}
There are four labelled 2-cell embeddings found in Lemma~\ref{CDlemma}
and three more in Lemma~\ref{theta5lemma3}.
In order to compare these for isomorphism, 
we reduce the problem to digraph isomorphism as follows.
Each labelled embedding can be described by a rotation system in a unique way,
i.e., there is a one-to-one correspondence between the embeddings 
and their rotation systems (see Introduction and \cite{KocayKreher} for 
more details). For example, the rotation system of the left embedding in Figure~\ref{caseCD2}
can be represented as
\medbreak
$A: (B,C,E)$

$B: (A,F,D)$

$C: (A,F,D)$

$D: (B,E,C)$

$E: (A,F,D)$

$F: (B,C,E)$
\medbreak

We convert an embedding to its medial digraph -- 
each edge of the embedding is subdivided with a new vertex, and a directed cycle is drawn around
each vertex using the subdividing vertices, according to the cyclic order and direction in the
rotation system. Isomorphic embeddings produce isomorphic medial digraphs,
and vice-versa (see \cite{GagarinKocayNeilsen, KocayKreher} for more details).   
By constructing the medial digraphs, and comparing them for
isomorphism using graph isomorphism software (e.g. see \cite{Match}), we find that all
the embeddings of $K_{3,3}$ found above are isomorphic.
The unique 2-cell embedding is shown in Figure~\ref{K33embedding}.

\end{proof}

\begin{figure}[ht]
\begin{center}
\begin{tikzpicture}[scale=0.75, line width = 0.5]
\draw (0, -0.6)--(0, 0.6)--(1, 1.6)--(2.5, 1.6)--(3.5, 0.6)--(3.5, -0.6)--(2.5, -1.6)--(1, -1.6)--(0, -0.6);
\node  at (-0.2, 0) {$a$};
\node  at (0.3, 1.3) {$b$};
\node  at (1.8, 1.8) {$a$};
\node  at (3.3, 1.3) {$b$};
\node  at (3.7, 0) {$c$};
\node  at (3.3, -1.3) {$d$};
\node  at (1.8, -1.8) {$c$};
\node  at (0.3, -1.3) {$d$};
%
\draw[line width=1] (0,0)--(0.7, 0.4)--(1.15, 0.1)--(1.6, -0.1)--(2.05, -0.1)--(2.5, 0.0)--(2.95, 0.2)--(3.5, 0);
\draw[line width=1] (2.05, -0.1)--(1.75, 1.6);
\draw[line width=1] (0.7, 0.4)--(0.5, 1.1);
\draw[line width=1] (1.15, 0.1)--(0.5, -1.1);
\draw[line width=1] (1.6, -0.1)--(1.75, -1.6);
\draw[line width=1] (2.5, 0.0)--(3, -1.1);
\draw[line width=1] (2.95, 0.2)--(3, 1.1);
\node  at (0.9, 0.6) {\smallmath 1};
\node  at (1.3, 0.35) {\smallmath 2};
\node  at (1.65, 0.15) {\smallmath 3};
\node  at (2.05, -0.35) {\smallmath 4};
\node  at (2.45, 0.3) {\smallmath 5};
\node  at (2.95, -0.05) {\smallmath 6};
\draw[fill=white] (0.7, 0.4) circle[radius=0.1];
\draw[fill=white] (1.15, 0.1) circle[radius=0.1];
\draw[fill=white] (1.6, -0.1) circle[radius=0.1];
\draw[fill=white] (2.05, -0.1) circle[radius=0.1];
\draw[fill=white] (2.5, 0.0) circle[radius=0.1];
\draw[fill=white] (2.95, 0.2) circle[radius=0.1];
\end{tikzpicture}
\caption{The unique 2-cell embedding of $K_{3,3}$ on the double torus.}
\label{K33embedding}
\end{center}
\end{figure}
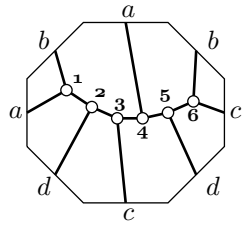

It is easy to see that the embedding in Figure~\ref{K33embedding} is non-orientable: the permutation $(1)(4)(26)(35)$ of vertices of $K_{3,3}$, which is an automorphism of $K_{3,3}$, maps the rotations to their reversals, implying the non-orientability.

\section{Hyperbolic Tilings}
\label{section-reps}

It is well known (see~\cite{Thurston}) that the torus has symbolic representations $a^+b^+a^-b^-$
and $a^+b^+c^+a^-b^-c^-$, which also represent tilings of the Euclidean plane by rectangles and
by regular hexagons, respectively.  In the tiling by rectangles, four rectangles meet at each vertex.
In the tiling by hexagons, three hexagons meet at each vertex. The corresponding 
translation groups of the plane
map rectangles to rectangles, and hexagons to hexagons. 
Any graph embedding on the torus 
which has exactly one face, and the face is equivalent to an open disk,
provides a polygonal representation of the torus.  
By Euler's formula, the number of
vertices and edges then satisfy $n + 1 -\varepsilon = 0$, so that $\varepsilon = n+ 1$.
Since we require the minimum degree to be at least three, this limits the possible graphs.
We must then have $n\le 2$, from which it follows that the rectangle and hexagon are the only
one-polygon representations of the torus.

With the double torus, there are many more possibilities for such one-face embeddings.
The standard representation $a^+b^+a^-b^-c^+d^+c^-d^-$ of the double torus
produces a tiling of the hyperbolic plane by regular octagons, in which eight octagons meet at each vertex.
The 2-cell embeddings of $\Theta_5$ and $K_{3,3}$ 
produce additional tilings of the hyperbolic plane.  The three embeddings of $\Theta_5$ produce tilings
by regular $10$-gons, i.e. polygons with $10$ sides.  
One has fundamental region $a^+b^+c^+d^+e^+c^-d^-a^-b^-e^-$, the second
has fundamental region $a^+b^+c^+d^+e^+a^-b^-c^-d^-e^-$,
and the third has fundamental region $a^+b^+c^+a^-d^+c^-e^+d^-b^-e^-$.
In each case five 10-gons meet at each vertex.  
The polygons of the fundamental regions determined by the embeddings of $\Theta_5$ 
are shown in Figure~\ref{hyperbolic}.   Dotted lines are used to show the
pairing of edges of the 10-gons.  The polygon boundaries are traversed in a clockwise direction,
such that for each pair of corresponding edges, the orientations of the 
two edges are opposite.

$K_{3,3}$ produces a tiling of the hyperbolic plane
by regular 18-gons, i.e., polygons with $18$ sides, with 
three 18-gons meeting at each vertex.
It gives a polygonal representation $a^+b^+c^+d^+e^+f^+b^-g^+h^+c^-f^-i^+g^-a^-d^-h^-i^-e^-$ 
of the double torus. The edges of the fundamental polygon are the nine edges of 
$K_{3,3}$, each one appearing twice on the polygon boundary.  
This is illustrated in Figure~\ref{18gon}, where dotted lines
show the pairing of edges of the polygon.

Euler's formula for the double torus and a one-face embedding gives $\varepsilon = n + 3$. Since the minimum degree
is required to be at least three, this gives $2\varepsilon \ge 3n$, or $2(n+3) \ge 3n$,
giving $n\le 6$.   There are a number of graphs satisfying this condition.  Each one will give
a number of representations of the double torus as a fundamental region
of the hyperbolic plane.  We use some
of them to find the 2-cell embeddings of $K_5$ on the double torus 
in the next section.

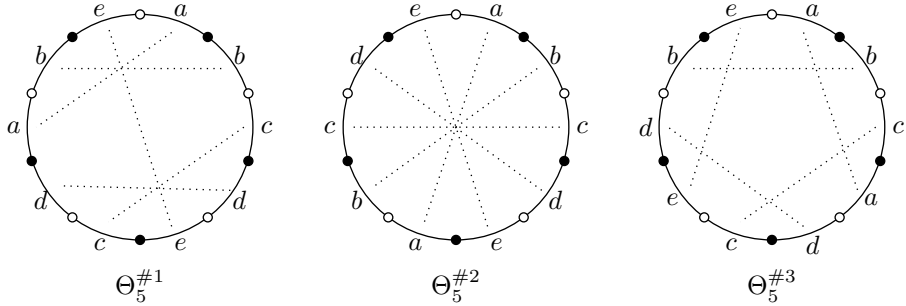
\begin{figure}[ht]
\begin{center}
\begin{tikzpicture}[scale=0.6, line width = 0.5]
\draw[fill=white] (0, 0) circle[radius=2.5];
\node  at (0.9, 2.6) {$a$};
\node  at (2.2, 1.6) {$b$};
\node  at (2.8, 0) {$c$};
\node  at (2.2, -1.6) {$d$};
\node  at (0.9, -2.6) {$e$};
\node  at (-0.9, -2.6) {$c$};
\node  at (-2.2, -1.6) {$d$};
\node  at (-2.8, 0) {$a$};
\node  at (-2.2, 1.6) {$b$};
\node  at (-0.9, 2.6) {$e$};
\draw[fill=white] (0, 2.5) circle[radius=0.1];
\draw[fill=black] (0, -2.5) circle[radius=0.1];
\draw[fill=white] (2.4, 0.75) circle[radius=0.1];
\draw[fill=black] (2.4, -0.75) circle[radius=0.1];
\draw[fill=white] (-2.4, 0.75) circle[radius=0.1];
\draw[fill=black] (-2.4, -0.75) circle[radius=0.1];
\draw[fill=black] (1.5, 2) circle[radius=0.1];
\draw[fill=white] (1.5, -2) circle[radius=0.1];
\draw[fill=black] (-1.5, 2) circle[radius=0.1];
\draw[fill=white] (-1.5, -2) circle[radius=0.1];
%
\draw[dotted, line width=0.5] (0.7, 2.1)--(-2.3, 0); 
\draw[dotted, line width=0.5] (1.8, 1.3)--(-1.8, 1.3); 
\draw[dotted, line width=0.5] (2.3, 0)--(-0.7, -2.1); 
\draw[dotted, line width=0.5] (2.0, -1.4)--(-1.8, -1.3); 
\draw[dotted, line width=0.5] (0.7, -2.2)--(-0.7, 2.2); 
%
\draw[fill=white] (7, 0) circle[radius=2.5];
\node  at (7.9, 2.6) {$a$};
\node  at (9.2, 1.6) {$b$};
\node  at (9.8, 0) {$c$};
\node  at (9.2, -1.6) {$d$};
\node  at (7.9, -2.6) {$e$};
\node  at (6.1, -2.6) {$a$};
\node  at (4.8, -1.6) {$b$};
\node  at (4.2, 0) {$c$};
\node  at (4.8, 1.6) {$d$};
\node  at (6.1, 2.6) {$e$};
\draw[dotted, line width=0.5] (7.7, 2.1)--(6.3, -2.1); 
\draw[dotted, line width=0.5] (8.8, 1.3)--(5.2, -1.3); 
\draw[dotted, line width=0.5] (9.3, 0)--(4.7, 0); 
\draw[dotted, line width=0.5] (8.9, -1.4)--(5.2, 1.3); 
\draw[dotted, line width=0.5] (7.7, -2.2)--(6.3, 2.2); 
\draw[fill=white] (7, 2.5) circle[radius=0.1];
\draw[fill=black] (7, -2.5) circle[radius=0.1];
\draw[fill=white] (9.4, 0.75) circle[radius=0.1];
\draw[fill=black] (9.4, -0.75) circle[radius=0.1];
\draw[fill=white] (4.6, 0.75) circle[radius=0.1];
\draw[fill=black] (4.6, -0.75) circle[radius=0.1];
\draw[fill=black] (8.5, 2) circle[radius=0.1];
\draw[fill=white] (8.5, -2) circle[radius=0.1];
\draw[fill=black] (5.5, 2) circle[radius=0.1];
\draw[fill=white] (5.5, -2) circle[radius=0.1];
\
%
\draw[fill=white] (14, 0) circle[radius=2.5];
\node  at (14.9, 2.6) {$a$};
\node  at (16.2, 1.6) {$b$};
\node  at (16.8, 0) {$c$};
\node  at (16.2, -1.6) {$a$};
\node  at (14.9, -2.6) {$d$};
\node  at (13.1, -2.6) {$c$};
\node  at (11.8, -1.6) {$e$};
\node  at (11.2, 0) {$d$};
\node  at (11.8, 1.6) {$b$};
\node  at (13.1, 2.6) {$e$};
\draw[dotted, line width=0.5] (14.7, 2.1)--(15.9, -1.4); 
\draw[dotted, line width=0.5] (15.8, 1.3)--(12.2, 1.3); 
\draw[dotted, line width=0.5] (16.3, 0)--(13.3, -2.1); 
\draw[dotted, line width=0.5] (14.7, -2.2)--(11.7, 0); 
\draw[dotted, line width=0.5] (12.2, -1.3)--(13.3, 2.2); 
\draw[fill=white] (14, 2.5) circle[radius=0.1];
\draw[fill=black] (14, -2.5) circle[radius=0.1];
\draw[fill=white] (16.4, 0.75) circle[radius=0.1];
\draw[fill=black] (16.4, -0.75) circle[radius=0.1];
\draw[fill=white] (11.6, 0.75) circle[radius=0.1];
\draw[fill=black] (11.6, -0.75) circle[radius=0.1];
\draw[fill=black] (15.5, 2) circle[radius=0.1];
\draw[fill=white] (15.5, -2) circle[radius=0.1];
\draw[fill=black] (12.5, 2) circle[radius=0.1];
\draw[fill=white] (12.5, -2) circle[radius=0.1];
\node at (0, -3.5) {$\Theta_5^{\# 1}$};
\node at (7, -3.5) {$\Theta_5^{\# 2}$};
\node at (14, -3.5) {$\Theta_5^{\# 3}$};
\end{tikzpicture}
\caption{Three 10-gons representing the double torus as a fundamental region of the hyperbolic plane.}
\label{hyperbolic}
\end{center}
\end{figure}

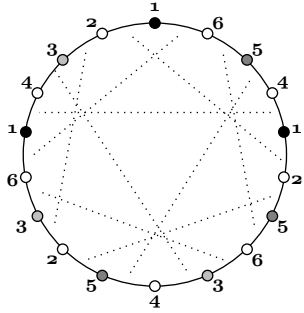
\begin{figure}[ht]
\begin{center}
\begin{tikzpicture}[scale=0.7, line width = 0.5]
\draw[fill=white] (7, 0) circle[radius=2.5];
\node  at (7, 2.8) {\smallmath 1};
\node  at (8.2, 2.5) {\smallmath 6};
\node  at (8.95, 2) {\smallmath 5};
\node  at (9.43, 1.37) {\smallmath 4};
\node  at (9.7, 0.48) {\smallmath 1};
\node  at (9.7, -0.48) {\smallmath 2};
\node  at (9.43, -1.37) {\smallmath 5};
\node  at (8.95, -2) {\smallmath 6};
\node  at (8.2, -2.5) {\smallmath 3};
\node  at (7, -2.8) {\smallmath 4};
\node  at (5.8, -2.5) {\smallmath 5};
\node  at (5.05, -2) {\smallmath 2};
\node  at (4.47, -1.37) {\smallmath 3};
\node  at (4.3, -0.48) {\smallmath 6};
\node  at (4.3, 0.48) {\smallmath 1};
\node  at (4.57, 1.37) {\smallmath 4};
\node  at (5.05, 2) {\smallmath 3};
\node  at (5.8, 2.5) {\smallmath 2};
\draw[dotted, line width=0.5] (4.7, 0)--(7.5, 2.3); 
\draw[dotted, line width=0.5] (8.3, 1.9)--(8.9, -1.4); 
\draw[dotted, line width=0.5] (8.8, 1.4)--(6.5, -2.3); 
\draw[dotted, line width=0.5] (9.2, 0.8)--(4.8, 0.8); 
\draw[dotted, line width=0.5] (9.4, -0.1)--(6.5, 2.3); 
\draw[dotted, line width=0.5] (9.2, -0.8)--(5.7, -2.0); 
\draw[dotted, line width=0.5] (8.3, -2.0)--(4.8, -0.7); 
\draw[dotted, line width=0.5] (7.6, -2.2)--(5.1, 1.6); 
\draw[dotted, line width=0.5] (5.1, -1.3)--(5.7, 2.0); 
\draw[fill=black] (7, 2.5) circle[radius=0.1];
\draw[fill=white] (7, -2.5) circle[radius=0.1];
\draw[fill=black] (9.45, 0.43) circle[radius=0.1];
\draw[fill=white] (9.45, -0.43) circle[radius=0.1];
\draw[fill=white] (9.23, 1.17) circle[radius=0.1];
\draw[fill=gray] (9.23, -1.17) circle[radius=0.1];
\draw[fill=gray] (8.75, 1.8) circle[radius=0.1];
\draw[fill=white] (8.75, -1.8) circle[radius=0.1];
\draw[fill=white] (8.0, 2.3) circle[radius=0.1];
\draw[fill=lightgray] (8.0, -2.3) circle[radius=0.1];
\draw[fill=white] (6.0, 2.3) circle[radius=0.1];
\draw[fill=gray] (6.0, -2.3) circle[radius=0.1];
\draw[fill=lightgray] (5.25, 1.8) circle[radius=0.1];
\draw[fill=white] (5.25, -1.8) circle[radius=0.1];
\draw[fill=white] (4.77, 1.17) circle[radius=0.1];
\draw[fill=lightgray] (4.77, -1.17) circle[radius=0.1];
\draw[fill=black] (4.55, 0.43) circle[radius=0.1];
\draw[fill=white] (4.55, -0.43) circle[radius=0.1];
\end{tikzpicture}
\caption{The 18-gon representation of the double torus derived from $K_{3,3}$.}
\label{18gon}
\end{center}
\end{figure}


\section{$K_5$ on the double torus}
\label{section-K5}

We find the distinct 2-cell embeddings of $K_5$ using a number of intermediate
graphs. They are related to $K_{3,3}$ and $\Theta_5$, and can be used as 
building blocks for 2-cell embeddings of 
various other graphs.

The double torus is an orientable surface.  If the rotations of an embedding are all
reversed, an equivalent, but possibly non-isomorphic, embedding results.
Two such embeddings will be considered as \emph{equivalent}. As before, an embedding $G^\tau$ whose
reversal is isomorphic to $G^\tau$ is said to be {\em non-orientable}.
But if the reversal is non-isomorphic to $G^\tau$, then the embedding
is said to be \emph{orientable}.  
%
%

Let $T_{i,j,k}$ denote the graph of a triangle in which one edge has multiplicity $i$,
one edge has multiplicity $j$, and the third edge has multiplicity $k$.  For example,
$T_{1,1,1} = K_3$.   $T_{i,j,k}$ has $i+j+k$ edges, so that a $2$-cell embedding of $T_{i,j,k}$
on the double torus has $f=i+j+k-5$ faces.   The most interesting case is when $f=1$, for
then the embedding must be a 2-cell embedding, with no digon faces.
We look at $T_{1,2,3}$.

\begin{theorem}
Up to equivalence, $T_{1,2,3}$ has two orientable 2-cell embeddings,
and four non-orientable 2-cell embeddings on the double torus.
\end{theorem}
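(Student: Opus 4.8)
The plan is to reduce the problem to the classification of $\Theta_5$ already obtained in Theorem~\ref{theta5thm}. Label the vertices of $T_{1,2,3}$ as $1,2,3$ so that the single edge $e$ joins $1$ and $2$, the double edge joins $2$ and $3$, and the triple edge joins $1$ and $3$; then the degrees are $4,3,5$ respectively. Contracting $e$ amalgamates $1$ and $2$ into a single vertex joined to $3$ by $2+3=5$ parallel edges, so $T_{1,2,3}\cdot e\cong\Theta_5$. A $2$-cell embedding of $T_{1,2,3}$ on the double torus has $f=1$ by Euler's formula, hence no digon and no triangular face, so Lemma~\ref{contract} applies to the non-parallel edge $e$ and shows that contraction produces a $2$-cell embedding of $\Theta_5$, necessarily one of $\Theta_5^{\#1},\Theta_5^{\#2},\Theta_5^{\#3}$. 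Conversely, each embedding of $T_{1,2,3}$ is recovered from one of these by splitting its degree-$5$ vertex: the rotation at the vertex to be split is cut into a consecutive $3$-arc (becoming the three triple edges at vertex $1$) and the complementary consecutive $2$-arc (the two double edges at vertex $2$), with $e$ inserted at the two cut points. Because contraction is canonical, two such expansions are isomorphic precisely when the chosen (split vertex, $2$-arc) data are related by an automorphism of the underlying $\Theta_5^{\#i}$; so the isomorphism classes lying over $\Theta_5^{\#i}$ are the orbits of these data under $\mathrm{Aut}(\Theta_5^{\#i})$.

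First I would enumerate these orbits using the automorphism groups from Lemmas~\ref{autotheta5_1}--\ref{autotheta5_3} and Corollary~\ref{autotheta5}. There are $2\cdot 5=10$ raw data for each embedding (a vertex to split and one of five consecutive $2$-arcs). For $\Theta_5^{\#2}$, the order-$10$ group acts transitively on edges and interchanges $u$ and $v$, so all ten data form one orbit: one class. For $\Theta_5^{\#3}$, the order-$5$ cyclic group acts transitively on the $2$-arcs at each vertex but does not interchange $u$ and $v$ (they are inequivalent by Corollary~\ref{autotheta5}), giving one class for splitting $u$ and one for splitting $v$: two classes. For $\Theta_5^{\#1}$, the order-$2$ group has its nontrivial element $g=(u,v)(1,4)(2,5)$ interchanging $u$ and $v$ and fixing no datum, so the ten data split into five orbits, each represented by a $2$-arc at $u$: five classes. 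In total there are $5+1+2=8$ isomorphism classes of $2$-cell embeddings of $T_{1,2,3}$.

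It remains to sort these into equivalence classes by orientability, i.e.\ to analyse the involution given by reversing all rotations. Reversal commutes with contraction, and each $\Theta_5^{\#i}$ is non-orientable (Lemmas~\ref{autotheta5_1}--\ref{autotheta5_3}), so reversal maps expansions of $\Theta_5^{\#i}$ to expansions of $\Theta_5^{\#i}$ and acts separately on the three families. Moreover the reversal-isomorphisms exhibited there, namely $(1,5)(2,4)$ for $\Theta_5^{\#1}$ and $(2,5)(3,4)$ for $\Theta_5^{\#2}$ and $\Theta_5^{\#3}$, are permutations of edges that fix both $u$ and $v$; hence reversal preserves the choice of split vertex and acts on the $2$-arc parameter through the relevant permutation $r$. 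The single class over $\Theta_5^{\#2}$ and the two classes over $\Theta_5^{\#3}$ are then fixed, hence non-orientable. Over $\Theta_5^{\#1}$, the map $r=(1,5)(2,4)$ sends the $2$-arc $\{5,1\}$ to itself and interchanges $\{1,2\}\leftrightarrow\{4,5\}$ and $\{2,3\}\leftrightarrow\{3,4\}$, giving one fixed (non-orientable) class and two transposed (orientable) pairs. Collecting the outcomes yields $1+1+2=4$ non-orientable equivalence classes and exactly $2$ orientable ones, as claimed.

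The main obstacle is the bookkeeping of the third paragraph: one must verify carefully that reversal genuinely commutes with the contraction used, that the reversal-isomorphisms of the $\Theta_5^{\#i}$ fix $u$ and $v$ (so that they act cleanly on the split data), and that the induced permutation $r$ on $2$-arcs is computed with the correct orientation conventions; a sign error here would redistribute the fixed points of the involution and change the orientable/non-orientable split. A minor preliminary point to confirm is that every vertex splitting does produce a cellular one-face embedding with no digon, which holds because contraction preserves the Euler characteristic and the unique face has length $2\varepsilon=12$.
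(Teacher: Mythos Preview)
Your proof is correct and uses the same reduction to $\Theta_5$ via contraction of the simple edge that the paper uses. Where you diverge is in the second half: the paper simply generates the $20$ candidate expansions (five each over $\Theta_5^{\#1}$ and $\Theta_5^{\#2}$, ten over $\Theta_5^{\#3}$), compares them pairwise for isomorphism to find eight classes, and then checks each for orientability---this step is essentially computational. You instead replace the comparison by an orbit count under $\mathrm{Aut}(\Theta_5^{\#i})$, using that the simple edge of $T_{1,2,3}$ is canonical so every isomorphism of expansions descends; this gives $5+1+2=8$ without listing anything. For orientability you again argue structurally, pushing the reversal involution down to $\Theta_5^{\#i}$ via the explicit reversal-isomorphisms $(1,5)(2,4)$ and $(2,5)(3,4)$ (which fix $u$ and $v$), and reading off its action on the split data. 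Your computation of that action is right: over $\Theta_5^{\#1}$ the map $r=(1,5)(2,4)$ fixes the $2$-arc $\{5,1\}$ and swaps $\{1,2\}\leftrightarrow\{4,5\}$, $\{2,3\}\leftrightarrow\{3,4\}$, and none of the swapped pairs lie in a common $\mathrm{Aut}$-orbit, so you get one non-orientable class and two orientable pairs; over $\Theta_5^{\#2}$ and $\Theta_5^{\#3}$ the split vertex is preserved, forcing all three classes there to be non-orientable. The totals $2$ and $4$ match the paper. The gain of your approach is that it is entirely by hand and makes transparent \emph{why} the $8$ classes arise and how they split under reversal; the paper's enumeration is shorter to state but leans on software or unrecorded case-checking. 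The caveats you flag in your final paragraph are genuine but you have in fact handled them: contraction and reversal commute, the exhibited reversal-isomorphisms do fix both vertices, and the induced action on unordered consecutive $2$-arcs is insensitive to orientation conventions.
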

\begin{proof}
Let $uv$ be the edge of $T_{1,2,3}$ with multiplicity one.  
Then $T_{1,2,3}\cdot uv \cong \Theta_5$.
By Lemma~\ref{contract}, a 2-cell embedding of $T_{1,2,3}$ can be contracted to a
2-cell embedding of $\Theta_5$, because $uv$ is not part of a digon, and because
the facial boundary of the unique face of $T_{1,2,3}$ is not a triangle.
Consequently every embedding of $T_{1,2,3}$ can be constructed by reversing the
edge contraction.  Each vertex of $\Theta_5$ has degree five.   


By Corollary~\ref{autotheta5}, the two vertices of $\Theta_5^{\#1}$ or $\Theta_5^{\#2}$ are equivalent, 
so that there are just five
ways of reversing the edge contraction for each of these embeddings 
(because each vertex has degree five and any three edges must be consecutive in the resulting rotation).  
For embedding $\Theta_5^{\#3}$, 
there are $10$ ways of reversing the edge contraction -- five for each vertex.
Comparison of the resulting $20$ embeddings of $T_{1,2,3}$ shows
that eight are non-isomorphic.  However, reversing the rotations shows that four are
non-orientable, and the other four fall into two pairs of equivalent orientable embeddings.  
The six inequivalent embeddings are shown in Figure~\ref{T123-ineq}.
\end{proof}

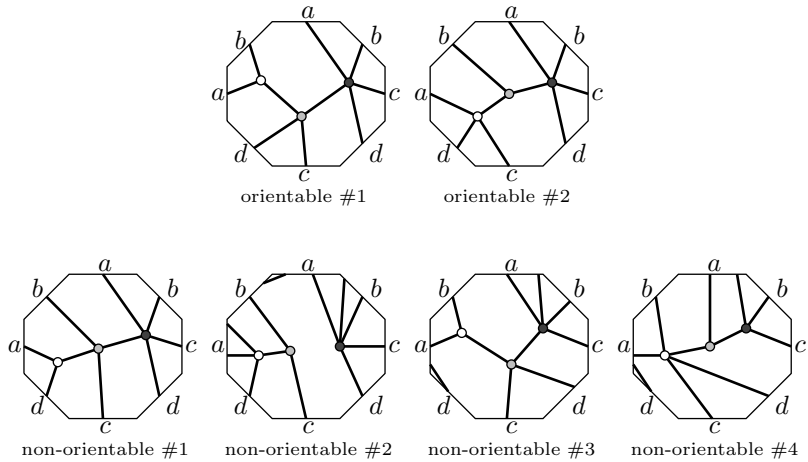
\begin{figure}[ht]
\begin{center}
\begin{tikzpicture}[scale=0.6, line width = 0.5]
\draw (4.5, 5)--(4.5, 6.2)--(5.5, 7.2)--(7, 7.2)--(8, 6.2)--(8, 5)--(7, 4)--(5.5, 4)--(4.5, 5);
\node  at (4.3, 5.6) {$a$};
\node  at (4.8, 6.8) {$b$};
\node  at (6.3, 7.4) {$a$};
\node  at (7.8, 6.9) {$b$};
\node  at (8.2, 5.6) {$c$};
\node  at (7.8, 4.3) {$d$};
\node  at (6.2, 3.8) {$c$};
\node  at (4.8, 4.3) {$d$};
\draw[line width=1] (4.5, 5.6)--(5.25, 5.9)--(6.15, 5.1)--(7.2, 5.85)--(8, 5.6);
\draw[line width=1] (5,6.7)--(5.25, 5.9);
\draw[line width=1] (5.1,4.4)--(6.15, 5.1)--(6.25, 4.0);
\draw[line width=1] (6.25, 7.2)--(7.2, 5.85)--(7.5, 6.7);
\draw[line width=1] (7.2, 5.85)--(7.5, 4.5);
\draw[fill=white] (5.25, 5.9) circle[radius=0.1];
\draw[fill=lightgray] (6.15, 5.1) circle[radius=0.1];
\draw[fill=darkgray] (7.2, 5.85) circle[radius=0.1];
%
\draw (9, 5)--(9, 6.2)--(10, 7.2)--(11.5, 7.2)--(12.5, 6.2)--(12.5, 5)--(11.5, 4)--(10, 4)--(9, 5);
\draw[line width=1] (9, 5.6)--(10.05, 5.1)--(10.75, 5.6)--(11.7, 5.85)--(12.5, 5.6);
\draw[line width=1] (9.5, 6.7)--(10.75, 5.6);
\draw[line width=1] (9.6,4.4)--(10.05, 5.1)--(10.75, 4);
\draw[line width=1] (10.75, 7.2)--(11.7, 5.85)--(12, 6.7);
\draw[line width=1] (11.7, 5.85)--(12, 4.5);
\node  at (8.8, 5.6) {$a$};
\node  at (9.3, 6.9) {$b$};
\node  at (10.8, 7.4) {$a$};
\node  at (12.3, 6.9) {$b$};
\node  at (12.7, 5.6) {$c$};
\node  at (12.3, 4.3) {$d$};
\node  at (10.8, 3.8) {$c$};
\node  at (9.3, 4.3) {$d$};
\draw[fill=white] (10.05, 5.1) circle[radius=0.1];
\draw[fill=lightgray] (10.75, 5.6) circle[radius=0.1];
\draw[fill=darkgray] (11.7, 5.85) circle[radius=0.1];
%
\draw (0, -0.6)--(0, 0.6)--(1, 1.6)--(2.5, 1.6)--(3.5, 0.6)--(3.5, -0.6)--(2.5, -1.6)--(1, -1.6)--(0, -0.6);
\draw[line width=1] (0, 0)--(0.75, -0.35)--(0.5, -1.1);
\draw[line width=1] (0.75, -0.35)--(1.65, -0.05)--(2.7, 0.25)--(3.5, 0);
\draw[line width=1] (0.5, 1.1)--(1.65, -0.05)--(1.75, -1.6);
\draw[line width=1] (1.75, 1.6)--(2.7, 0.25)--(3, 1.1);
\draw[line width=1] (2.7, 0.25)--(3, -1.1);
\node  at (-0.2, 0) {$a$};
\node  at (0.3, 1.3) {$b$};
\node  at (1.8, 1.8) {$a$};
\node  at (3.3, 1.3) {$b$};
\node  at (3.7, 0) {$c$};
\node  at (3.3, -1.3) {$d$};
\node  at (1.8, -1.8) {$c$};
\node  at (0.3, -1.3) {$d$};
\draw[fill=white] (0.75, -0.35) circle[radius=0.1];
\draw[fill=lightgray] (1.65, -0.05) circle[radius=0.1];
\draw[fill=darkgray] (2.7, 0.25) circle[radius=0.1];
%
\draw (4.5, -0.6)--(4.5, 0.6)--(5.5, 1.6)--(7, 1.6)--(8, 0.6)--(8, -0.6)--(7, -1.6)--(5.5, -1.6)--(4.5, -0.6);
\node  at (4.3, 0) {$a$};
\node  at (4.8, 1.3) {$b$};
\node  at (6.3, 1.8) {$a$};
\node  at (7.8, 1.3) {$b$};
\node  at (8.2, 0) {$c$};
\node  at (7.8, -1.3) {$d$};
\node  at (6.3, -1.8) {$c$};
\node  at (4.8, -1.3) {$d$};
\draw[line width=1] (4.5, -0.2)--(5.2, -0.2)--(4.5, 0.5);
\draw[line width=1] (8,0)--(7, 0)--(7.1, 1.5);
\draw[line width=1] (5, 1.1)--(5.9, -0.1)--(6.25, -1.6);
\draw[line width=1] (5.2, -0.2)--(5, -1.1);
\draw[line width=1] (7.5, 1.1)--(7, 0)--(7.5, -1.1);
\draw[line width=1] (7,0)--(6.4, 1.6);
\draw[line width=1] (5.8, 1.6)--(5.3, 1.4);
\draw[line width=1] (5.2, -0.2)--(5.9, -0.1);
\draw[fill=white] (5.2, -0.2) circle[radius=0.1];
\draw[fill=lightgray] (5.9, -0.1) circle[radius=0.1];
\draw[fill=darkgray] (7, 0) circle[radius=0.1];
%
\draw (9, -0.6)--(9, 0.6)--(10, 1.6)--(11.5, 1.6)--(12.5, 0.6)--(12.5, -0.6)--(11.5, -1.6)--(10, -1.6)--(9, -0.6);
\node  at (8.8, 0) {$a$};
\node  at (9.3, 1.3) {$b$};
\node  at (10.8, 1.8) {$a$};
\node  at (12.3, 1.3) {$b$};
\node  at (12.7, 0) {$c$};
\node  at (12.3, -1.3) {$d$};
\node  at (10.8, -1.8) {$c$};
\node  at (9.3, -1.3) {$d$};
\draw[line width=1] (9.5, 1.1)--(9.7, 0.3)--(9, 0);
\draw[line width=1] (9.7, 0.3)--(10.8, -0.4)--(11.5, 0.4);
\draw[line width=1] (10.7, -1.6)--(10.8, -0.4)--(12.2, -0.9);
\draw[line width=1] (10.7, 1.6)--(11.5, 0.4)--(11.4, 1.6);
\draw[line width=1] (12.1, 1.0)--(11.5, 0.4)--(12.5, 0);
\draw[line width=1] (9, -0.4)--(9.4, -1.0);
\draw[fill=white] (9.7, 0.3) circle[radius=0.1];
\draw[fill=lightgray] (10.8, -0.4) circle[radius=0.1];
\draw[fill=darkgray] (11.5, 0.4) circle[radius=0.1];
\draw (13.5, -0.6)--(13.5, 0.6)--(14.5, 1.6)--(16, 1.6)--(17, 0.6)--(17, -0.6)--(16, -1.6)--(14.5, -1.6)--(13.5, -0.6);
\node  at (13.3, 0) {$a$};
\node  at (13.8, 1.3) {$b$};
\node  at (15.3, 1.8) {$a$};
\node  at (16.8, 1.3) {$b$};
\node  at (17.2, 0) {$c$};
\node  at (16.8, -1.3) {$d$};
\node  at (15.3, -1.8) {$c$};
\node  at (13.8, -1.3) {$d$};
\draw[line width=1] (13.5, -0.2)--(14.2, -0.2)--(15.2, 0)--(16, 0.4)--(16.5, 1.1);
\draw[line width=1] (17,0)--(16, 0.4)--(15.8, 1.6);
\draw[line width=1] (14, 1.1)--(14.2, -0.2)--(15.25, -1.6);
\draw[line width=1] (15.2, 1.6)--(15.2, 0);
\draw[line width=1] (14.2, -0.2)--(16.5, -1.1);
\draw[line width=1] (13.5, -0.4)--(13.9, -1.0);
\draw[fill=white] (14.2, -0.2) circle[radius=0.1];
\draw[fill=lightgray] (15.2, 0) circle[radius=0.1];
\draw[fill=darkgray] (16, 0.4) circle[radius=0.1];
\node at (6.2, 3.3) {\scriptsize orientable \#1};
\node at (10.7, 3.3) {\scriptsize orientable \#2};
\node at (1.8, -2.3) {\scriptsize non-orientable \#1};
\node at (6.3, -2.3) {\scriptsize non-orientable \#2};
\node at (10.8, -2.3) {\scriptsize non-orientable \#3};
\node at (15.3, -2.3) {\scriptsize non-orientable \#4};
\end{tikzpicture}
\caption{The six inequivalent 2-cell embeddings of $T_{1,2,3}$.}
\label{T123-ineq}
\end{center}
\end{figure}

Denote by $K_4^+$ the graph obtained from $K_4$ by doubling one edge.
Euler's formula tells us that a 2-cell embedding of $K_4^+$ on the double
torus has just one face.

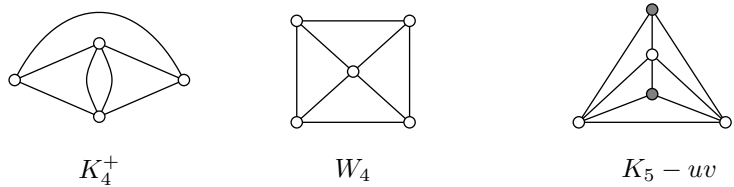
\begin{figure}[ht]
\label{K4W4}
\begin{center}
\begin{tikzpicture}[scale=0.75, line width = 0.5]
%
\draw (0, 0)--(1.5, 0.65)--(3, 0)--(1.5, -0.65)--(0, 0);
\draw (1.5, 0.65) .. controls (1.8, 0) .. (1.5, -0.65);
\draw (1.5, 0.65) .. controls (1.2, 0) .. (1.5, -0.65);
\draw (0, 0) .. controls (0.8, 1.6) and (2.2, 1.6) .. (3, 0);
\draw[fill=white] (0, 0) circle[radius=0.1];
\draw[fill=white] (1.5, 0.65) circle[radius=0.1];
\draw[fill=white] (1.5, -0.65) circle[radius=0.1];
\draw[fill=white] (3, 0) circle[radius=0.1];
\node at (1.5, -1.6) {$K_4^+$};
%
\draw (5, -0.75)--(5, 1.05)--(7, 1.05)--(7, -0.75)--(5, -0.75)--(7, 1.05);
\draw (5, 1.05)--(7, -0.75);
\draw[fill=white] (5, -0.75) circle[radius=0.1];
\draw[fill=white] (5, 1.05) circle[radius=0.1];
\draw[fill=white] (7, 1.05) circle[radius=0.1];
\draw[fill=white] (7, -0.75) circle[radius=0.1];
\draw[fill=white] (6, 0.15) circle[radius=0.1];
\node at (6, -1.6) {$W_4$};
\draw (10, -0.75)--(11.3, 1.25)--(12.6, -0.75)--(10, -0.75)--(11.3, -0.25)--(11.3, 1.25);
\draw (10, -0.75)--(11.3, 0.45)--(12.6, -0.75)--(11.3, -0.25);
\draw[fill=white] (10, -0.75) circle[radius=0.1];
\draw[fill=white] (12.6, -0.75) circle[radius=0.1];
\draw[fill=gray] (11.3, 1.25) circle[radius=0.1];
\draw[fill=gray] (11.3, -0.25) circle[radius=0.1];
\draw[fill=white] (11.3, 0.45) circle[radius=0.1];
\node at (11.6, -1.6) {$K_5-uv$};
\end{tikzpicture}
\caption{The graphs $K_4^+$, $W_4$, and $K_5-uv$.}
\label{K4W4}
\end{center}
\end{figure}

\begin{theorem}
\label{K4theorem}
Up to equivalence, $K_4^+$ has two orientable 2-cell embeddings,
and three non-orientable 2-cell embeddings on the double torus.
\end{theorem}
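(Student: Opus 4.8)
The plan is to reverse a single edge-contraction from $T_{1,2,3}$, mirroring the way the preceding theorem reversed a contraction from $\Theta_5$. First I note that, by Euler's formula, a $2$-cell embedding of $K_4^+$ has exactly one face, whose facial boundary is a $14$-gon (each of the seven edges traversed twice); in particular no face is a digon or a triangle. Label the vertices of $K_4^+$ so that $12$ is the doubled edge, and let $e=13$ be a non-doubled edge incident with an endpoint of the doubled edge. Contracting $e$ identifies the two copies of $12$ together with $23$ into a multiplicity-$3$ edge, and $14$ together with $34$ into a multiplicity-$2$ edge, while $24$ survives as a multiplicity-$1$ edge, so $K_4^+\cdot e\cong T_{1,2,3}$. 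Since $e$ is non-parallel and the unique face is a $14$-gon (hence not triangular), Lemma~\ref{contract} applies and shows that contracting $e$ in any $2$-cell embedding of $K_4^+$ yields a $2$-cell embedding of $T_{1,2,3}$. Consequently every $2$-cell embedding of $K_4^+$ is recovered by reversing this contraction on one of the embeddings of $T_{1,2,3}$ found in the preceding theorem (Figure~\ref{T123-ineq}).

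Reversing the contraction means splitting the \emph{unique} degree-$5$ vertex $r$ of $T_{1,2,3}$ — the vertex meeting the multiplicity-$2$ and multiplicity-$3$ bundles — into two adjacent vertices joined by the restored edge $13$. Let $p$ (degree $4$) and $q$ (degree $3$) be the other two vertices, so that the three-edge bundle, written $a$, joins $r$ to $p$, and the two-edge bundle, written $b$, joins $r$ to $q$; these five edges appear in some cyclic order in the rotation of $r$. A split is admissible precisely when the edges assigned to each new vertex form a contiguous arc of the rotation and each new vertex is incident to at least one $a$ and at least one $b$ (so that both new vertices remain adjacent to $p$ and to $q$, and $K_4^+$ results). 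Since $K_4^+$ has minimum degree $3$, the two arcs must have sizes $2$ and $3$: the size-$2$ arc becomes the degree-$3$ vertex and must consist of one $a$ and one $b$, while the complementary size-$3$ arc (two $a$'s and one $b$) becomes the degree-$4$ vertex carrying the doubled edge. Thus the admissible splits of a given embedding correspond exactly to the adjacent $\{a,b\}$ pairs in the rotation of $r$: there are two such pairs when the two $b$-edges are consecutive in the rotation, and four when they are separated.

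Carrying this out, I would read the rotation of $r$ off each of the $T_{1,2,3}$ embeddings in Figure~\ref{T123-ineq} (together with the reversals of the two orientable ones, so as to have a representative of every isomorphism class), list the admissible splits, and record the resulting labelled rotation systems for $K_4^+$. These are then compared for isomorphism by the medial-digraph method used in Theorem~\ref{K33theorem}, and each surviving class is tested for orientability by reversing all its rotations and checking whether the result is isomorphic to the original. I expect the list to collapse to five equivalence classes: two orientable, where the reversal is a distinct but equivalent embedding, and three non-orientable, where the reversal is isomorphic to the embedding itself, matching the graphs pictured for $K_4^+$ in Figure~\ref{K4W4}.

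The main obstacle is the bookkeeping in this last step rather than any conceptual difficulty. One must be careful to discard the inadmissible splits (those leaving a new vertex non-adjacent to $p$ or $q$, or of degree below $3$), to avoid double-counting splits related by an automorphism of the underlying $T_{1,2,3}$ embedding, and to pair the orientable classes correctly with their reversals when determining orientability. As in Theorem~\ref{K33theorem}, the safest way to perform these comparisons is to reduce each embedding to its medial digraph and test digraph isomorphism, which simultaneously supplies the automorphisms needed for the orientability test.
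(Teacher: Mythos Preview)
Your proposal is correct and follows essentially the same route as the paper: contract a simple edge of $K_4^+$ incident with the doubled edge to obtain $T_{1,2,3}$, then reverse this contraction at the unique degree-$5$ vertex of each $T_{1,2,3}$ embedding from Figure~\ref{T123-ineq} and sort the results by isomorphism and orientability. Your analysis of admissible splits (the size-$2$ arc must be one $a$ and one $b$, giving two or four splits depending on whether the two $b$'s are consecutive) is a sharper version of the paper's looser ``at most five ways'' per embedding, but the method is the same. One small simplification: you need not adjoin the reversals of the two orientable $T_{1,2,3}$ embeddings, since splitting the reversal of an embedding yields exactly the reversal of the split embedding, so working with the six equivalence-class representatives already produces a representative of every equivalence class of $K_4^+$ embeddings.
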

\begin{proof}
Consider a 2-cell embedding of $K_4^+$.  It has just one face, so that
there is no digon or triangular face. 
By Lemma~\ref{contract} and by Figure~\ref{K4W4}, we see that every 2-cell embedding of $K_4^+$ has four 
edges whose contraction 
results in a 2-cell embedding of $T_{1,2,3}$.  The edge that was contracted
maps to the unique vertex of $T_{1,2,3}$ of degree five.  Consequently, every 2-cell
embedding of $K_4^+$ can be obtained from some 2-cell embedding of 
$T_{1,2,3}$ by reversing an edge-contraction using the vertex of
degree five.   Starting from the
embeddings of Figure~\ref{T123-ineq}, there are at most five ways of splitting the vertex of
degree five into vertices of degree three and four.  The vertex must be split so that
the triple and double edge result in one double edge, and various single edges. 
There are at most 30 embeddings that can be obtained like this.
Using isomorphism testing software to distinguish them,
we find that there are two orientable embeddings and three non-orientable embeddings,
shown in Figures~\ref{K4plusorient} and~\ref{K4plusnonorient}.
\end{proof}

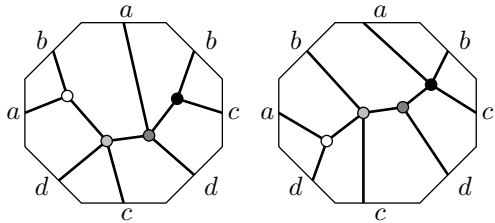
\begin{figure}[ht]
\begin{center}
\begin{tikzpicture}[scale=0.75, line width = 0.5]
\draw (0, -0.6)--(0, 0.6)--(1, 1.6)--(2.5, 1.6)--(3.5, 0.6)--(3.5, -0.6)--(2.5, -1.6)--(1, -1.6)--(0, -0.6);
\node  at (-0.2, 0) {$a$};
\node  at (0.3, 1.3) {$b$};
\node  at (1.8, 1.8) {$a$};
\node  at (3.3, 1.3) {$b$};
\node  at (3.7, 0) {$c$};
\node  at (3.3, -1.3) {$d$};
\node  at (1.8, -1.8) {$c$};
\node  at (0.3, -1.3) {$d$};
\draw[line width=1] (0, 0)--(0.75, 0.3)--(1.45, -0.5)--(2.2, -0.4)--(2.7, 0.25)--(3.5, 0);
\draw[line width=1] (0.5,1.1)--(0.75, 0.3);
\draw[line width=1] (0.6,-1.2)--(1.45, -0.5)--(1.75, -1.6);
\draw[line width=1] (1.75, 1.6)--(2.2, -0.4)--(3.0, -1.1);
\draw[line width=1] (3.0, 1.1)--(2.7, 0.25);
%
\draw[fill=white] (0.75, 0.3) circle[radius=0.1];
\draw[fill=lightgray] (1.45, -0.5) circle[radius=0.1];
\draw[fill=gray] (2.2, -0.4) circle[radius=0.1];
\draw[fill=black] (2.7, 0.25) circle[radius=0.1];
%
\draw (4.5, -0.6)--(4.5, 0.6)--(5.5, 1.6)--(7, 1.6)--(8, 0.6)--(8, -0.6)--(7, -1.6)--(5.5, -1.6)--(4.5, -0.6);
\draw[line width=1] (4.5, 0)--(5.35, -0.5)--(6.0, 0)--(6.7, 0.1)--(7.2, 0.5)--(8, 0);
\draw[line width=1] (5, 1.1)--(6.0, 0)--(6.0, -1.6);
\draw[line width=1] (5.1,-1.2)--(5.35, -0.5);
\draw[line width=1] (6.0, 1.6)--(7.2, 0.5)--(7.5, 1.1);
\draw[line width=1] (6.7, 0.1)--(7.5, -1.1);
\node  at (4.3, 0) {$a$};
\node  at (4.8, 1.3) {$b$};
\node  at (6.3, 1.8) {$a$};
\node  at (7.8, 1.3) {$b$};
\node  at (8.2, 0) {$c$};
\node  at (7.8, -1.3) {$d$};
\node  at (6.3, -1.8) {$c$};
\node  at (4.8, -1.3) {$d$};
\draw[fill=white] (5.35, -0.5) circle[radius=0.1];
\draw[fill=lightgray] (6.0, 0) circle[radius=0.1];
\draw[fill=gray] (6.7, 0.1) circle[radius=0.1];
\draw[fill=black] (7.2, 0.5) circle[radius=0.1];
\end{tikzpicture}
\caption{The two inequivalent orientable 2-cell embeddings of $K_4^+$.}
\label{K4plusorient}
\end{center}
\end{figure}

\begin{figure}[ht]
\begin{center}
\begin{tikzpicture}[scale=0.75, line width = 0.5]
\draw (0, -0.6)--(0, 0.6)--(1, 1.6)--(2.5, 1.6)--(3.5, 0.6)--(3.5, -0.6)--(2.5, -1.6)--(1, -1.6)--(0, -0.6);
\node  at (-0.2, 0) {$a$};
\node  at (0.3, 1.3) {$b$};
\node  at (1.8, 1.8) {$a$};
\node  at (3.3, 1.3) {$b$};
\node  at (3.7, 0) {$c$};
\node  at (3.3, -1.3) {$d$};
\node  at (1.8, -1.8) {$c$};
\node  at (0.3, -1.3) {$d$};
\draw[line width=1] (0, 0)--(0.75, 0.3)--(1.45, -0.5)--(2.15, -0.2)--(2.85, 0)--(3.5, 0);
\draw[line width=1] (0.5,1.1)--(0.75, 0.3);
\draw[line width=1] (0.6,-1.2)--(1.45, -0.5)--(1.75, -1.6);
\draw[line width=1] (2.85, 0)--(3.0, 1.1);
\draw[line width=1] (1.75, 1.6)--(2.15, -0.2);
\draw[line width=1] (2.85, 0)--(3.0, -1.1);
%
\draw[fill=white] (0.75, 0.3) circle[radius=0.1];
\draw[fill=lightgray] (1.45, -0.5) circle[radius=0.1];
\draw[fill=gray] (2.15, -0.2) circle[radius=0.1];
\draw[fill=black] (2.85, 0) circle[radius=0.1];
%
\draw (4.5, -0.6)--(4.5, 0.6)--(5.5, 1.6)--(7, 1.6)--(8, 0.6)--(8, -0.6)--(7, -1.6)--(5.5, -1.6)--(4.5, -0.6);
\draw[line width=1] (4.5, 0)--(5.25, 0)--(5.95, -0.1)--(6.75, 0)--(7.35, 0.5)--(8, 0);
\draw[line width=1] (5, 1.1)--(5.25, 0);
\draw[line width=1] (5.1,-1.2)--(5.25, 0);
\draw[line width=1] (5.95, -0.1)--(6.25, -1.6);
\draw[line width=1] (6.25, 1.6)--(7.35, 0.5)--(7.5, 1.1);
\draw[line width=1] (6.75, 0)--(7.5, -1.1);
\node  at (4.3, 0) {$a$};
\node  at (4.8, 1.3) {$b$};
\node  at (6.3, 1.8) {$a$};
\node  at (7.8, 1.3) {$b$};
\node  at (8.2, 0) {$c$};
\node  at (7.8, -1.3) {$d$};
\node  at (6.3, -1.8) {$c$};
\node  at (4.8, -1.3) {$d$};
\draw[fill=white] (5.25, 0) circle[radius=0.1];
\draw[fill=lightgray] (5.95, -0.1) circle[radius=0.1];
\draw[fill=gray] (6.75, 0) circle[radius=0.1];
\draw[fill=black] (7.35, 0.5) circle[radius=0.1];
%
\draw (9, -0.6)--(9, 0.6)--(10, 1.6)--(11.5, 1.6)--(12.5, 0.6)--(12.5, -0.6)--(11.5, -1.6)--(10, -1.6)--(9, -0.6);
\draw[line width=1] (9, 0)--(9.75, 0)--(10.45, 0)--(11.15, 0)--(11.85, 0)--(12.5, 0);
\draw[line width=1] (9.5, 1.1)--(9.75, 0)--(9.5, -1.1);
\draw[line width=1] (10.75, -1.6)--(10.45, 0);
\draw[line width=1] (10.75, 1.6)--(11.15, 0);
\draw[line width=1] (12, 1.1)--(11.85, 0)--(12, -1.1);
\node  at (8.8, 0) {$a$};
\node  at (9.3, 1.3) {$b$};
\node  at (10.8, 1.8) {$a$};
\node  at (12.3, 1.3) {$b$};
\node  at (12.7, 0) {$c$};
\node  at (12.3, -1.3) {$d$};
\node  at (10.8, -1.8) {$c$};
\node  at (9.3, -1.3) {$d$};
\draw[fill=white] (9.75, 0) circle[radius=0.1];
\draw[fill=lightgray] (10.45, 0) circle[radius=0.1];
\draw[fill=gray] (11.15, 0) circle[radius=0.1];
\draw[fill=black] (11.85, 0) circle[radius=0.1];
\end{tikzpicture}
\caption{The three inequivalent non-orientable 2-cell embeddings of $K_4^+$.}
\label{K4plusnonorient}
\end{center}
\end{figure}
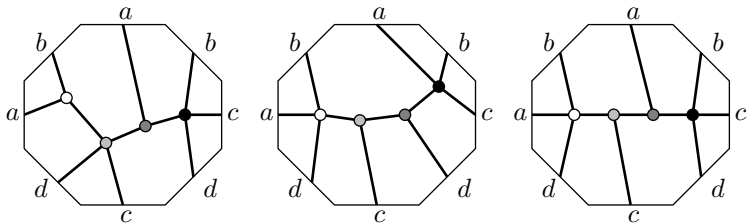

\bigbreak
Consider now a 2-cell embedding of the wheel graph $W_4$, shown in Figure~\ref{K4W4}.

\begin{theorem}
\label{W4thm}
Up to equivalence, $W_4$ has one orientable 2-cell embedding,
and three non-orientable 2-cell embeddings on the double torus.
\end{theorem}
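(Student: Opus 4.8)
The plan is to mirror the reverse edge-contraction method already used for $T_{1,2,3}$ and $K_4^+$, this time realizing $W_4$ as a split of $K_4^+$. First I would observe that contracting any rim edge $v_1v_2$ of the $4$-cycle of $W_4$ amalgamates the two spokes $hv_1,hv_2$ into a pair of parallel edges, so that $W_4\cdot v_1v_2\cong K_4^+$, with the contracted edge mapping to an endpoint of the doubled edge, namely a vertex of degree four (see Figure~\ref{K4W4}). By Euler's formula a $2$-cell embedding of $W_4$ has a single face, hence no digon or triangular face, and the rim edge $v_1v_2$ is neither parallel nor on the boundary of a triangular face. Thus Lemma~\ref{contract} applies and contraction sends a $2$-cell embedding of $W_4$ to a $2$-cell embedding of $K_4^+$. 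Consequently every embedding of $W_4$ is recovered by reversing this contraction, i.e. by splitting a degree-four vertex of some $2$-cell embedding of $K_4^+$ back into the path $[v_1,v_2]$, with the reappearing edge $v_1v_2$ as the new central edge.

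Next I would enumerate these splits starting from the five inequivalent embeddings of $K_4^+$ produced in Theorem~\ref{K4theorem} (Figures~\ref{K4plusorient} and~\ref{K4plusnonorient}). Because reversing all rotations commutes with edge-contraction, every $W_4$ embedding is, up to equivalence, the split of one of these five representatives, so it suffices to work from them. To rebuild $W_4$ the chosen degree-four vertex $w$ must be split into two degree-three vertices joined by $v_1v_2$; hence its four incident edge-ends are divided into two consecutive arcs of length two in the rotation at $w$, subject to the structural requirement that the two parallel edges separate (one spoke to each of $v_1,v_2$) and that the two single edges likewise separate into the two rim edges $v_1v_4$ and $v_2v_3$. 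For a cyclic list of four edge-ends this admits at most two valid splits per degree-four vertex, and allowing for both endpoints of the doubled edge (which need not be equivalent in a given embedding) this produces only a short, bounded list of candidate rotation systems for $W_4$.

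Finally I would sort the candidates exactly as in the proof of Theorem~\ref{K33theorem}: convert each candidate embedding to its medial digraph, test these for isomorphism with graph-isomorphism software to collapse the list to its distinct isomorphism classes, and then reverse all rotations of each representative and test whether the reversal is isomorphic to the original. The self-reverse candidates are the non-orientable embeddings, while the remaining ones fall into reversal-equivalent pairs counted once up to equivalence as orientable. Carrying this out should leave exactly one orientable and three non-orientable $2$-cell embeddings of $W_4$ on the double torus.

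The reduction to $K_4^+$ and the split count are routine; the main obstacle is the bookkeeping in the final step, namely generating every admissible split without omission and then correctly partitioning the resulting rotation systems both by medial-digraph isomorphism and by the reversal test, since an orientable embedding and its non-isomorphic reversal must be recognized as a single equivalence class while each non-orientable embedding is its own reversal. This is the same computational core that resolved the earlier theorems, so I expect no new conceptual difficulty beyond careful enumeration.
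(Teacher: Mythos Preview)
Your proposal is correct and follows essentially the same approach as the paper's proof: contract a peripheral (rim) edge of $W_4$ to obtain $K_4^+$, invoke Lemma~\ref{contract} since the single face has length $16$, then reverse the contraction by splitting the degree-four vertices of the five inequivalent $K_4^+$ embeddings from Theorem~\ref{K4theorem} and sort the results with isomorphism software. Your write-up is in fact more explicit than the paper's in two places---you spell out why the five equivalence-class representatives suffice (reversal commutes with contraction) and you bound the number of admissible splits per vertex---but these are refinements of the same argument, not a different route.
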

\begin{proof}
Consider a 2-cell embedding of $W_4$.   By Euler's formula, it has just
one face.
From Figure~\ref{K4W4}, we see that contracting any one of the four ``peripheral'' edges 
results in  $K_4^+$.  By Lemma~\ref{contract}, this will be a 2-cell embedding of $K_4^+$.
Consequently, every 2-cell embedding of $W_4$ can be obtained from a 2-cell
embedding of  $K_4^+$.
The contracted edge maps onto one of the vertices of $K_4^+$ of degree four.
Using the five embeddings of $K_4^+$ given by Theorem~\ref{K4theorem}, we split each
of the vertices of degree four in all possible ways that result in $W_4$.
Then using isomorphism testing software to distinguish the resulting embeddings, 
we find that there is one orientable embedding and three non-orientable embeddings. 
They are shown in Figure~\ref{wheel4}.
\end{proof}

\begin{figure}[ht]
\begin{center}
\begin{tikzpicture}[scale=0.75, line width = 0.5]
\draw (0, -0.6)--(0, 0.6)--(1, 1.6)--(2.5, 1.6)--(3.5, 0.6)--(3.5, -0.6)--(2.5, -1.6)--(1, -1.6)--(0, -0.6);
\node  at (-0.2, 0) {$a$};
\node  at (0.3, 1.3) {$b$};
\node  at (1.8, 1.8) {$a$};
\node  at (3.3, 1.3) {$b$};
\node  at (3.7, 0) {$c$};
\node  at (3.3, -1.3) {$d$};
\node  at (1.8, -1.8) {$c$};
\node  at (0.3, -1.3) {$d$};
\draw[line width=1] (0, 0)--(0.75, 0.3)--(1.15, -0.4)--(1.75, -0.4)--(2.3, 0.1)--(2.9, 0.15)--(3.5, 0);
\draw[line width=1] (0.5,1.1)--(0.75, 0.3);
\draw[line width=1] (0.6,-1.2)--(1.15, -0.4);
\draw[line width=1] (1.75, -0.4)--(1.75, -1.6);
\draw[line width=1] (1.75, 1.6)--(2.3, 0.1);
\draw[line width=1] (3.0, -1.1)--(2.9, 0.15)--(3.0, 1.1);
\draw[fill=white] (0.75, 0.3) circle[radius=0.1];
\draw[fill=white] (1.15, -0.4) circle[radius=0.1];
\draw[fill=white] (1.75, -0.4) circle[radius=0.1];
\draw[fill=white] (2.3, 0.1) circle[radius=0.1];
\draw[fill=gray] (2.9, 0.15) circle[radius=0.1];
%
\draw (4.5, -0.6)--(4.5, 0.6)--(5.5, 1.6)--(7, 1.6)--(8, 0.6)--(8, -0.6)--(7, -1.6)--(5.5, -1.6)--(4.5, -0.6);
\draw[line width=1] (4.5, 0)--(5.3, -0.5)--(5.7, 0)--(6.25, -0.2)--(6.85, -0.2)--(7.3, 0.45)--(8, 0);
\draw[line width=1] (5, 1.1)--(5.7, 0);
\draw[line width=1] (5.1,-1.2)--(5.3, -0.5);
\draw[line width=1] (6.25, -0.2)--(6.25, -1.6);
\draw[line width=1] (6.25, 1.6)--(7.3, 0.45)--(7.5, 1.1);
\draw[line width=1] (6.85, -0.2)--(7.5, -1.1);
\node  at (4.3, 0) {$a$};
\node  at (4.8, 1.3) {$b$};
\node  at (6.3, 1.8) {$a$};
\node  at (7.8, 1.3) {$b$};
\node  at (8.2, 0) {$c$};
\node  at (7.8, -1.3) {$d$};
\node  at (6.3, -1.8) {$c$};
\node  at (4.8, -1.3) {$d$};
\draw[fill=white] (5.3, -0.5) circle[radius=0.1];
\draw[fill=white] (5.7, 0) circle[radius=0.1];
\draw[fill=white] (6.25, -0.2) circle[radius=0.1];
\draw[fill=white] (6.85, -0.2) circle[radius=0.1];
\draw[fill=gray] (7.3, 0.45) circle[radius=0.1];
%
\draw (9, -0.6)--(9, 0.6)--(10, 1.6)--(11.5, 1.6)--(12.5, 0.6)--(12.5, -0.6)--(11.5, -1.6)--(10, -1.6)--(9, -0.6);
\draw[line width=1] (9, 0)--(9.75, -0.35)--(10.5, -0.1)--(11.1, 0.05)--(11.5, 0.6)--(12, 1.1);
\draw[line width=1] (9.5, 1.1)--(10.5, -0.1)--(10.75, -1.6);
\draw[line width=1] (9.5, -1.1)--(9.75, -0.35);
\draw[line width=1] (10.75, 1.6)--(11.5, 0.6);
\draw[line width=1] (11.1, 0.05)--(11.6, -0.3)--(12.5, 0);
\draw[line width=1] (11.6, -0.3)--(12, -1.1);
\node  at (8.8, 0) {$a$};
\node  at (9.3, 1.3) {$b$};
\node  at (10.8, 1.8) {$a$};
\node  at (12.3, 1.3) {$b$};
\node  at (12.7, 0) {$c$};
\node  at (12.3, -1.3) {$d$};
\node  at (10.8, -1.8) {$c$};
\node  at (9.3, -1.3) {$d$};
\draw[fill=white] (9.75, -0.35) circle[radius=0.1];
\draw[fill=gray] (10.5, -0.1) circle[radius=0.1];
\draw[fill=white] (11.1, 0.05) circle[radius=0.1];
\draw[fill=white] (11.6, -0.3) circle[radius=0.1];
\draw[fill=white] (11.5, 0.6) circle[radius=0.1];
%
\draw (13.5, -0.6)--(13.5, 0.6)--(14.5, 1.6)--(16, 1.6)--(17, 0.6)--(17, -0.6)--(16, -1.6)--(14.5, -1.6)--(13.5, -0.6);
\node  at (13.3, 0) {$a$};
\node  at (13.8, 1.3) {$b$};
\node  at (15.3, 1.8) {$a$};
\node  at (16.8, 1.3) {$b$};
\node  at (17.2, 0) {$c$};
\node  at (16.8, -1.3) {$d$};
\node  at (15.3, -1.8) {$c$};
\node  at (13.8, -1.3) {$d$};
\draw[line width=1] (13.5, 0)--(14.2, 0.3)--(14.7, -0.1)--(15.3, -0.3)--(16.0, -0.1)--(16.5, 0.3)--(17, 0);
\draw[line width=1] (14, 1.1)--(14.2, 0.3);
\draw[line width=1] (14.7, -0.1)--(14, -1.1);
\draw[line width=1] (16.5, 1.1)--(16.5, 0.3);
\draw[line width=1] (15.25, 1.6)--(16.0, -0.1)--(16.5, -1.1);
\draw[line width=1] (15.3, -0.3)--(15.25, -1.6);
\draw[fill=white] (14.2, 0.3) circle[radius=0.1];
\draw[fill=white] (14.7, -0.1) circle[radius=0.1];
\draw[fill=white] (15.3, -0.3) circle[radius=0.1];
\draw[fill=gray] (16.0, -0.1) circle[radius=0.1];
\draw[fill=white] (16.5, 0.3) circle[radius=0.1];

\node at (1.7, -2.3) {\footnotesize orientable};
\node at (6.2, -2.3) {\scriptsize non-orientable \#1};
\node at (10.8, -2.3) {\scriptsize non-orientable \#2};
\node at (15.3, -2.3) {\scriptsize non-orientable \#3};
\end{tikzpicture}
\caption{The four inequivalent 2-cell embeddings of $W_4$.}
\label{wheel4}
\end{center}
\end{figure}
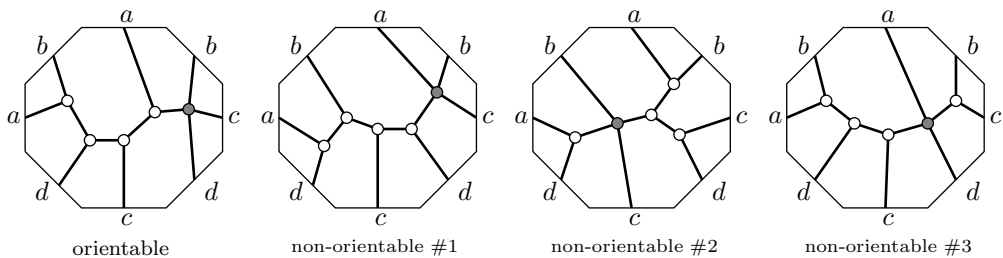

\bigbreak

We now turn to $K_5$.  

\begin{theorem}
\label{K5thm}
Up to equivalence, $K_5$ has $14$ orientable and 
$17$ non-orientable $2$-cell embeddings on the double torus.
\end{theorem}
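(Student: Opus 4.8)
The plan is to reach $K_5$ from the already-classified wheel $W_4$ of Theorem~\ref{W4thm}, reversing \emph{edge deletions} rather than contractions. This is forced: every edge of $K_5$ lies on a triangle, so Lemma~\ref{contract} cannot be applied to $K_5$ directly, whereas deletion is well-behaved. Write the five vertices as a hub $h$ together with a rim $4$-cycle $c_1c_2c_3c_4$, so that $K_5 = W_4 + c_1c_3 + c_2c_4$, with $K_5-uv$ (Figure~\ref{K4W4}) the intermediate graph obtained by restoring a single diagonal. By Euler's formula a $2$-cell embedding of $K_5-uv$ has $f=2$ faces and one of $K_5$ has $f=3$. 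I would proceed in three stages: first classify the embeddings of $K_5-uv$, then those of $K_5$, and finally split the list according to orientability.

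For the first stage I would use that an edge bordering two distinct faces can be deleted so as to merge them into a single face, yielding a one-face (hence $2$-cell) embedding of the smaller graph. The three edges of the triangle on the degree-four vertices of $K_5-uv$ are exactly the edges whose deletion produces $W_4$, and they form one orbit under $\aut{K_5-uv}$; the completeness claim is that every $2$-cell embedding of $K_5-uv$ admits one such edge separating its two faces, so that each embedding arises from one of the four $W_4$-embeddings of Figure~\ref{wheel4} by reinserting the deleted edge through the single face. I would then enumerate, for each $W_4$-embedding, all placements of the new edge, i.e. all ways of inserting it into the rotations at its two prescribed endpoints that are realizable across the one face without creating a digon, and reduce the resulting list modulo isomorphism. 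The deduplication is carried out exactly as in Theorem~\ref{K33theorem}, by passing to medial digraphs and testing them with isomorphism software. The same scheme, applied with $K_5-uv$ in place of $W_4$ and with the restored edge $uv$ (legitimate since $K_5$ is edge-transitive, so any face-separating edge of a $K_5$-embedding may be taken to be $uv$), produces every $2$-cell embedding of $K_5$ and, after isomorphism reduction, the final list of $31$ inequivalent embeddings.

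The orientability count is the third, routine stage: for each of the $31$ embeddings I would reverse all rotations and test the reversal against the original via its medial digraph, declaring the embedding non-orientable precisely when the two are isomorphic; this should partition the list as $14$ orientable and $17$ non-orientable. The main obstacle is twofold and essentially bookkeeping rather than conceptual. First, one must secure the completeness of each reduction, namely that a face-separating edge of the required type always exists, so that no $K_5$-embedding is missed; this is where a short face-size argument (no admissible embedding can force all three candidate edges to be repeated on a single facial walk) is needed. Second, the two rounds of edge-insertion generate a large number of candidate rotation systems whose correct classification up to isomorphism, and then up to reversal-equivalence, is only trustworthy through careful, software-assisted comparison of medial digraphs; keeping that enumeration exhaustive yet free of spurious duplicates is the delicate part of the proof.
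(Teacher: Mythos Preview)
Your overall strategy matches the paper's: delete a face-separating edge from $K_5$ to get $K_5-uv$ (two faces), delete another to get a one-face graph, then rebuild by edge insertion and filter by isomorphism. The difference is that you use only $W_4$ as the one-face base, while the paper uses \emph{both} $W_4$ and $K_4^+$ (the latter arising when the second deleted edge is a spoke $ux$ or $vx$ rather than a triangle edge). You justify restricting to $W_4$ by the claim that every $2$-cell embedding of $K_5-uv$ on the double torus has at least one triangle edge with distinct faces on its two sides.

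That claim is false, so the proposal has a genuine gap. Take the rotation system
\[
u:(x,y,z),\quad v:(y,x,z),\quad x:(u,v,y,z),\quad y:(v,u,x,z),\quad z:(u,v,x,y).
\]
Tracing faces gives exactly two: a $4$-gon $F_2$ with boundary $u\,z\,v\,y$ and a $14$-gon $F_1$ carrying everything else; Euler's formula confirms genus $2$. The separating edges are precisely $uz,\,vz,\,uy,\,vy$, all spokes. Each of $xy,\,yz,\,zx$ occurs twice on $\partial F_1$ and not at all on $\partial F_2$, so none of the three triangle edges separates the two faces. This embedding of $K_5-uv$ therefore cannot be obtained from any $W_4$-embedding by reinserting a single edge across the unique face; it is reached only through the $K_4^+$ branch that you omitted. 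Your proposed ``short face-size argument'' is also aimed at the wrong target: the obstruction is not that the three triangle edges lie on a single facial walk (they may or may not), but that each individually can have the same face on both sides. The fix is exactly what the paper does---run the second deletion over \emph{all} nine edges of $K_5-uv$, landing on either $W_4$ or a subdivision of $K_4^+$, and rebuild from both families.
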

\begin{proof}
A 2-cell embedding  of $K_5$ on the double torus has three faces.  Therefore 
in every 2-cell embedding, there
is at least one edge $uv$ that has different faces on its two sides.
If we delete this edge, we obtain a 2-cell embedding of $K_5-uv$,
an embedding with two faces.   Therefore $K_5-uv$ also has at least one
edge with different faces on its two sides.   By removing such
an edge, we obtain a 2-cell embedding of a graph with $f=1$.
$K_5-uv$ is shown in Figure~\ref{K4W4}, where $u$ and $v$ are shaded grey.  
It consists of a triangle $(x,y,z)$, with vertices $u$ and $v$ both adjacent
to each of $\{x,y,z\}$.   Consequently it has just two kinds of edges --- those
on the triangle $(x,y,z)$, and those incident on $u$ or $v$.
If an edge of the triangle $(x,y,z)$ is deleted, the result is the wheel graph $W_4$.
If an edge incident on $u$ or $v$ is deleted, the resulting graph is
homeomorphic to $K_4^+$, having a vertex of degree two.   Either case is possible.

We start with the 2-cell embeddings of $W_4$ and $K_4^+$.   With $W_4$, we add an edge
between two non-adjacent vertices in all possible ways, to obtain a 2-cell 
embedding of $K_5-uv$.  There are many ways of doing this, because a 2-cell
embedding of $W_4$ has a single facial cycle of length 16.  Each vertex of degree three
appears three times on the facial cycle, so that there are 18 ways of 
adding an edge to each embedding of $W_4$.   This gives $72$ embeddings of $K_5-uv$
derived from $W_4$.

With $K_4^+$, we subdivide one of the pair of double edges with a new vertex $x$, and then add an edge 
$xu$, where $u$ is a vertex of degree three.  There are $24$ ways to do this for each
$2$-cell embedding of $K_4^+$, giving $120$ more embeddings of $K_5-uv$.

The embeddings of $K_5-uv$ are then distinguished with isomorphism testing software,
giving $60$ non-isomorphic embeddings in total, which reduces to $21$ orientable 
and $18$ non-orientable $2$-cell 
embeddings of $K_5-uv$.  Finally,
the deleted edge $uv$ is added to each embedding of $K_5-uv$ in all possible ways.
There can be numerous ways to add $uv$ to each embedding.
The result is $45$ non-isomorphic embeddings in total, which reduces to $14$ orientable 
and $17$ non-orientable $2$-cell embeddings of $K_5$.
\end{proof}

\bigbreak
The addition of edges to $K_4^+$, $W_4$, and $K_5-uv$ in all possible ways 
in Theorem~\ref{K5thm} was done by using  a computer program. 
A list of rotation systems for the resulting $31$ inequivalent 
$2$-cell embeddings of $K_5$ is given in Appendix~A.
Most of the $31$ embeddings ($27$) have an automorphism group of order one (for example, see Figure~\ref{K5fig01} below). 
There is one embedding with
a group of order five, two with a group of order four, and one with a group of order two.


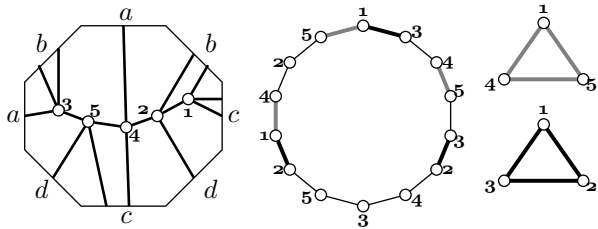
\begin{figure}[ht]
\begin{center}
\begin{tikzpicture}[scale=0.75, line width = 0.5]
\draw (0, -0.6)--(0, 0.6)--(1, 1.6)--(2.5, 1.6)--(3.5, 0.6)--(3.5, -0.6)--(2.5, -1.6)--(1, -1.6)--(0, -0.6);
\node  at (-0.2, 0) {$a$};
\node  at (0.3, 1.3) {$b$};
\node  at (1.8, 1.8) {$a$};
\node  at (3.3, 1.3) {$b$};
\node  at (3.7, 0) {$c$};
\node  at (3.3, -1.3) {$d$};
\node  at (1.8, -1.8) {$c$};
\node  at (0.3, -1.3) {$d$};
\node  at (0.75, 0.2) {\smallmath 3};
\node  at (1.25, 0.1) {\smallmath 5};
\node  at (1.95, -0.35) {\smallmath 4};
\node  at (2.1, 0.1) {\smallmath 2};
\node  at (2.9, 0.05) {\smallmath 1};
\draw[line width=1] (0,0)--(0.6, 0.1)--(1.125, -0.1)--(1.8, -0.2)--(2.35, 0)--(2.9, 0.3)--(3.5, 0);
\draw[line width=1] (0.6, 1.2)--(0.6, 0.1)--(0.25, 0.9); 
\draw[line width=1] (0.5, -1.1)--(1.125, -0.1)--(1.45, -1.6); 
\draw[line width=1] (1.75, 1.6)--(1.8, -0.2)--(1.85, -1.6); 
\draw[line width=1] (3.0, 1.1)--(2.35, 0)--(3.0, -1.1); 
\draw[line width=1] (3.25, 0.9)--(2.9, 0.3)--(3.5, 0.3); 
\draw[fill=white] (0.6, 0.1) circle[radius=0.1];
\draw[fill=white] (1.125, -0.1) circle[radius=0.1];
\draw[fill=white] (1.8, -0.2) circle[radius=0.1];
\draw[fill=white] (2.35, 0) circle[radius=0.1];
\draw[fill=white] (2.9, 0.3) circle[radius=0.1];
%
%
\draw[line width=1.5, black] (6.0, 1.6)--(6.75, 1.4);
\draw[line width=1.5, black] (7.55, -0.35)--(7.3, -0.95);
\draw[line width=1.5, black] (4.7, -0.95)--(4.45, -0.35);
\draw[line width=1.5, gray] (5.25, 1.4)--(6.0, 1.6);
\draw[line width=1.5, gray] (7.3, 0.95)--(7.55, 0.35);
\draw[line width=1.5, gray] (4.45, -0.35)--(4.45, 0.35);
\draw (6.75, 1.4)--(7.3, 0.95);
\draw (7.55, 0.35)--(7.55, -0.35);
\draw (7.3, -0.95)--(6.75, -1.4)--(6.0, -1.6)--(5.25, -1.4)--(4.7, -0.95);
\draw (4.45, 0.35)--(4.7, 0.95)--(5.25, 1.4);
\draw[fill=white] (6.0, 1.6) circle[radius=0.1];
\draw[fill=white] (6.75, 1.4) circle[radius=0.1];
\draw[fill=white] (7.3, 0.95) circle[radius=0.1];
\draw[fill=white] (7.55, 0.35) circle[radius=0.1];
\draw[fill=white] (7.55, -0.35) circle[radius=0.1];
\draw[fill=white] (7.3, -0.95) circle[radius=0.1];
\draw[fill=white] (6.75, -1.4) circle[radius=0.1];
\draw[fill=white] (6.0, -1.6) circle[radius=0.1];
\draw[fill=white] (5.25, -1.4) circle[radius=0.1];
\draw[fill=white] (4.7, -0.95) circle[radius=0.1];
\draw[fill=white] (4.45, -0.35) circle[radius=0.1];
\draw[fill=white] (4.45, 0.35) circle[radius=0.1];
\draw[fill=white] (4.7, 0.95) circle[radius=0.1];
\draw[fill=white] (5.25, 1.4) circle[radius=0.1];
\draw[line width=1.5, gray] (9.2, 1.65)--(9.9, 0.65)--(8.5, 0.65)--(9.2, 1.65);
\draw[fill=white] (9.2, 1.65) circle[radius=0.1];
\draw[fill=white] (9.9, 0.65) circle[radius=0.1];
\draw[fill=white] (8.5, 0.65) circle[radius=0.1];
\draw[line width=1.5, black] (9.2, -0.15)--(9.9, -1.15)--(8.5, -1.15)--(9.2, -0.15);
\draw[fill=white] (9.2, -0.15) circle[radius=0.1];
\draw[fill=white] (9.9, -1.15) circle[radius=0.1];
\draw[fill=white] (8.5, -1.15) circle[radius=0.1];
\node  at (6.0, 1.85) {\smallmath 1};
\node  at (6.95, 1.5) {\smallmath 3};
\node  at (7.5, 0.95) {\smallmath 4};
\node  at (7.7, 0.45) {\smallmath 5};
\node  at (7.7, -0.45) {\smallmath 3};
\node  at (7.5, -0.95) {\smallmath 2};
\node  at (6.95, -1.5) {\smallmath 4};
\node  at (6.0, -1.85) {\smallmath 3};
\node  at (5.0, -1.45) {\smallmath 5};
\node  at (4.5, -0.95) {\smallmath 2};
\node  at (4.25, -0.35) {\smallmath 1};
\node  at (4.25, 0.35) {\smallmath 4};
\node  at (4.5, 0.95) {\smallmath 2};
\node  at (5.0, 1.45) {\smallmath 5};
\node  at (9.2, 1.9) {\smallmath 1};
\node  at (10.05, 0.55) {\smallmath 5};
\node  at (8.25, 0.55) {\smallmath 4};
\node  at (9.2, 0.1) {\smallmath 1};
\node  at (10.05, -1.25) {\smallmath 2};
\node  at (8.25, -1.25) {\smallmath 3};
\end{tikzpicture}
\caption{An orientable embedding of $K_5$ with the trivial automorphism group.}
\label{K5fig01}
\end{center}
\end{figure}


Several of the embeddings are interesting.
One of the non-orientable embeddings has an automorphism  group of order five.  It is shown in Figure~\ref{K5fig43},
together with its three facial cycles.
Its automorphism group is generated
by the permutation $(1,2,3,5,4)$.  The three faces determine a decomposition of
the double torus, as well as a tiling of the hyperbolic plane by decagons and pentagons,
in which two pentagons and two decagons meet at each corner.

\begin{figure}[ht]
\begin{center}
\begin{tikzpicture}[scale=0.75, line width = 0.5]
\draw (0, -0.6)--(0, 0.6)--(1, 1.6)--(2.5, 1.6)--(3.5, 0.6)--(3.5, -0.6)--(2.5, -1.6)--(1, -1.6)--(0, -0.6);
\node  at (-0.2, 0) {$a$};
\node  at (0.3, 1.3) {$b$};
\node  at (1.8, 1.8) {$a$};
\node  at (3.3, 1.3) {$b$};
\node  at (3.7, 0) {$c$};
\node  at (3.3, -1.3) {$d$};
\node  at (1.8, -1.8) {$c$};
\node  at (0.3, -1.3) {$d$};
\node  at (0.75, 0.2) {\smallmath 1};
\node  at (1.3, 0.0) {\smallmath 5};
\node  at (1.8, -0.15) {\smallmath 3};
\node  at (2.1, 0.1) {\smallmath 2};
\node  at (2.9, 0.05) {\smallmath 4};
\draw[line width=1] (0,0)--(0.6, 0)--(1.125, -0.2)--(1.8, -0.4)--(2.35, 0)--(2.9, 0.3)--(3.5, 0);
\draw[line width=1] (0.5, 1.1)--(0.6, 0)--(0.05, -0.7);
\draw[line width=1] (1, 1.6)--(1.125, -0.2)--(0.55, -1.15);
\draw[line width=1] (1.5, -1.6)--(1.8, -0.4)--(2.45, -1.6);
\draw[line width=1] (1.75, 1.6)--(2.35, 0)--(3.0, -1.1);
\draw[line width=1] (3.1, 1.0)--(2.9, 0.3)--(3.5, 0.6);
\draw[line width=1] (3.3, -0.8)--(3.5, -0.35);
\draw[fill=white] (0.6, 0) circle[radius=0.1];
\draw[fill=white] (1.125, -0.2) circle[radius=0.1];
\draw[fill=white] (1.8, -0.4) circle[radius=0.1];
\draw[fill=white] (2.35, 0) circle[radius=0.1];
\draw[fill=white] (2.9, 0.3) circle[radius=0.1];
%
%
\draw[line width=1, black] (6.0, 1.6)--(6.9, 1.3);
\draw[line width=1, black] (7.49, 0.53)--(7.49, -0.53);
\draw[line width=1, black] (6.9, -1.3)--(6.0, -1.6);
\draw[line width=1, black] (5.1, -1.3)--(4.51, -0.53);
\draw[line width=1, black] (4.51, 0.53)--(5.1, 1.3);
\draw[line width=1, gray] (6.9, 1.3)--(7.49, 0.53);
\draw[line width=1, gray] (7.49, -0.53)--(6.9, -1.3);
\draw[line width=1, gray] (6.0, -1.6)--(5.1, -1.3);
\draw[line width=1, gray] (4.51, -0.53)--(4.51, 0.53);
\draw[line width=1, gray] (5.1, 1.3)--(6.0, 1.6);

\draw[fill=white] (6.0, 1.6) circle[radius=0.1];
\draw[fill=white] (6.0, -1.6) circle[radius=0.1];
\draw[fill=white] (7.49, 0.53) circle[radius=0.1];
\draw[fill=white] (6.9, 1.3) circle[radius=0.1];
\draw[fill=white] (7.49, -0.53) circle[radius=0.1];
\draw[fill=white] (6.9, -1.3) circle[radius=0.1];
\draw[fill=white] (4.51, 0.53) circle[radius=0.1];
\draw[fill=white] (5.1, 1.3) circle[radius=0.1];
\draw[fill=white] (4.51, -0.53) circle[radius=0.1];
\draw[fill=white] (5.1, -1.3) circle[radius=0.1];
\draw[line width=1, gray] (9.2, 1.65)--(9.88, 1.2)--(9.78, 0.45)--(8.62, 0.45)--(8.52, 1.2)--(9.2, 1.65);
\draw[fill=white] (9.2, 1.65) circle[radius=0.1];
\draw[fill=white] (9.88, 1.2) circle[radius=0.1];
\draw[fill=white] (9.78, 0.45) circle[radius=0.1];
\draw[fill=white] (8.62, 0.45) circle[radius=0.1];
\draw[fill=white] (8.52, 1.2) circle[radius=0.1];
\draw[line width=1, black] (9.2, -0.15)--(9.88, -0.6)--(9.78, -1.35)--(8.62, -1.35)--(8.52, -0.6)--(9.2, -0.15);
\draw[fill=white] (9.2, -0.15) circle[radius=0.1];
\draw[fill=white] (9.88, -0.6) circle[radius=0.1];
\draw[fill=white] (9.78, -1.35) circle[radius=0.1];
\draw[fill=white] (8.62, -1.35) circle[radius=0.1];
\draw[fill=white] (8.52, -0.6) circle[radius=0.1];
\node  at (6.0, 1.85) {\smallmath 1};
\node  at (7.0, 1.45) {\smallmath 4};
\node  at (7.65, 0.63) {\smallmath 2};
\node  at (7.65, -0.63) {\smallmath 1};
\node  at (7.0, -1.45) {\smallmath 3};
\node  at (6.0, -1.8) {\smallmath 2};
\node  at (4.85, -1.4) {\smallmath 5};
\node  at (4.30, -0.6) {\smallmath 3};
\node  at (4.30, 0.6) {\smallmath 4};
\node  at (4.85, 1.4) {\smallmath 5};
\node  at (9.2, 1.85) {\smallmath 1};
\node  at (10.05, 1.3) {\smallmath 5};
\node  at (9.95, 0.30) {\smallmath 2};
\node  at (8.45, 0.30) {\smallmath 4};
\node  at (8.3, 1.25) {\smallmath 3};
\node  at (9.2, 0.05) {\smallmath 1};
\node  at (10.05, -0.45) {\smallmath 2};
\node  at (9.95, -1.5) {\smallmath 3};
\node  at (8.45, -1.5) {\smallmath 5};
\node  at (8.3, -0.45) {\smallmath 4};
\end{tikzpicture}
\caption{The non-orientable embedding of $K_5$ with the automorphism group of order $5$.}
\label{K5fig43}
\end{center}
\end{figure}
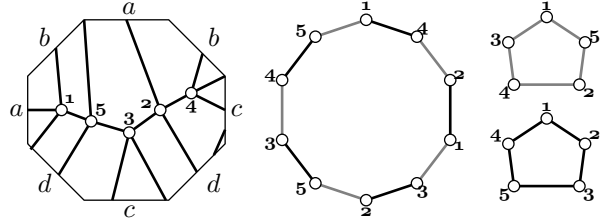

The 10-gon representations of Figure~\ref{hyperbolic} for the double torus are often more suitable for
drawing embeddings of $K_5$ than the standard octagon form of the double torus.  Figure~\ref{10gonK5}
shows a drawing of $K_5$ on two of the 10-gon representations.  As can be seen from the drawing, these
embeddings have an automorphism group of order five. They are isomorphic embeddings, 
although this is not evident from the diagram.

%

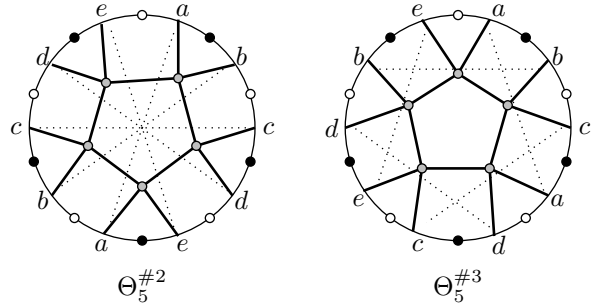
\begin{figure}[ht]
\begin{center}
\begin{tikzpicture}[scale=0.6, line width = 0.5]
\draw[fill=white] (7, 0) circle[radius=2.5];
\node  at (7.9, 2.6) {$a$};
\node  at (9.2, 1.6) {$b$};
\node  at (9.8, 0) {$c$};
\node  at (9.2, -1.6) {$d$};
\node  at (7.9, -2.6) {$e$};
\node  at (6.1, -2.6) {$a$};
\node  at (4.8, -1.6) {$b$};
\node  at (4.2, 0) {$c$};
\node  at (4.8, 1.6) {$d$};
\node  at (6.1, 2.6) {$e$};
\draw[dotted, line width=0.5] (7.7, 2.1)--(6.3, -2.1); 
\draw[dotted, line width=0.5] (8.8, 1.3)--(5.2, -1.3); 
\draw[dotted, line width=0.5] (9.3, 0)--(4.7, 0); 
\draw[dotted, line width=0.5] (8.9, -1.4)--(5.2, 1.3); 
\draw[dotted, line width=0.5] (7.7, -2.2)--(6.3, 2.2); 
\draw[fill=white] (7, 2.5) circle[radius=0.1];
\draw[fill=black] (7, -2.5) circle[radius=0.1];
\draw[fill=white] (9.4, 0.75) circle[radius=0.1];
\draw[fill=black] (9.4, -0.75) circle[radius=0.1];
\draw[fill=white] (4.6, 0.75) circle[radius=0.1];
\draw[fill=black] (4.6, -0.75) circle[radius=0.1];
\draw[fill=black] (8.5, 2) circle[radius=0.1];
\draw[fill=white] (8.5, -2) circle[radius=0.1];
\draw[fill=black] (5.5, 2) circle[radius=0.1];
\draw[fill=white] (5.5, -2) circle[radius=0.1];
\draw[line width=1] (8.2, -0.4)--(7, -1.3)--(5.8, -0.4)--(6.2, 1)--(7.8, 1.1)--(8.2, -0.4); 
\draw[line width=1] (9.5, 0)--(8.2, -0.4)--(9.0, -1.5);
\draw[line width=1] (7.8, -2.4)--(7, -1.3)--(6.15, -2.35);
\draw[line width=1] (5.0, -1.5)--(5.8, -0.4)--(4.5, 0);
\draw[line width=1] (5.0, 1.4)--(6.2, 1)--(6.1, 2.3);
\draw[line width=1] (7.8, 2.4)--(7.8, 1.1)--(9.05, 1.4);
\draw[fill=lightgray] (8.2, -0.4) circle[radius=0.1];
\draw[fill=lightgray] (7, -1.3) circle[radius=0.1];
\draw[fill=lightgray] (5.8, -0.4) circle[radius=0.1];
\draw[fill=lightgray] (6.2, 1) circle[radius=0.1];
\draw[fill=lightgray] (7.8, 1.1) circle[radius=0.1];
%
\draw[fill=white] (14, 0) circle[radius=2.5];
\node  at (14.9, 2.6) {$a$};
\node  at (16.2, 1.6) {$b$};
\node  at (16.8, 0) {$c$};
\node  at (16.2, -1.6) {$a$};
\node  at (14.9, -2.6) {$d$};
\node  at (13.1, -2.6) {$c$};
\node  at (11.8, -1.6) {$e$};
\node  at (11.2, 0) {$d$};
\node  at (11.8, 1.6) {$b$};
\node  at (13.1, 2.6) {$e$};
\draw[dotted, line width=0.5] (14.7, 2.1)--(15.9, -1.4); 
\draw[dotted, line width=0.5] (15.8, 1.3)--(12.2, 1.3); 
\draw[dotted, line width=0.5] (16.3, 0)--(13.3, -2.1); 
\draw[dotted, line width=0.5] (14.7, -2.2)--(11.7, 0); 
\draw[dotted, line width=0.5] (12.2, -1.3)--(13.3, 2.2); 
\draw[fill=white] (14, 2.5) circle[radius=0.1];
\draw[fill=black] (14, -2.5) circle[radius=0.1];
\draw[fill=white] (16.4, 0.75) circle[radius=0.1];
\draw[fill=black] (16.4, -0.75) circle[radius=0.1];
\draw[fill=white] (11.6, 0.75) circle[radius=0.1];
\draw[fill=black] (11.6, -0.75) circle[radius=0.1];
\draw[fill=black] (15.5, 2) circle[radius=0.1];
\draw[fill=white] (15.5, -2) circle[radius=0.1];
\draw[fill=black] (12.5, 2) circle[radius=0.1];
\draw[fill=white] (12.5, -2) circle[radius=0.1];
\draw[line width=1] (14, 1.2)--(15.1, 0.5)--(14.7, -0.9)--(13.2, -0.9)--(12.9, 0.5)--(14, 1.2); 
\draw[line width=1] (16.0, 1.5)--(15.1, 0.5)--(16.5, 0);
\draw[line width=1] (16.0, -1.5)--(14.7, -0.9)--(14.8, -2.4);
\draw[line width=1] (13.0, -2.3)--(13.2, -0.9)--(11.9, -1.4);
\draw[line width=1] (11.5, -0.0)--(12.9, 0.5)--(12.0, 1.5);
\draw[line width=1] (13.2, 2.4)--(14, 1.2)--(14.7, 2.4);
\draw[fill=lightgray] (14, 1.2) circle[radius=0.1];
\draw[fill=lightgray] (15.1, 0.5) circle[radius=0.1];
\draw[fill=lightgray] (12.9, 0.5) circle[radius=0.1];
\draw[fill=lightgray] (14.7, -0.9) circle[radius=0.1];
\draw[fill=lightgray] (13.2, -0.9) circle[radius=0.1];
%
\node at (7, -3.5) {$\Theta_5^{\# 2}$};
\node at (14, -3.5) {$\Theta_5^{\# 3}$};
\end{tikzpicture}
\caption{Embeddings of $K_5$ on two $10$-gon representations of the double torus.}
\label{10gonK5}
\end{center}
\end{figure}


\section{Conclusion}
\label{section-conclusion}

This paper provides a method for constructing all distinct $2$-cell embeddings of various graphs on the double torus, 
and presents all the $2$-cell embeddings of fundamental non-planar graphs 
$K_{3,3}$ and $K_5$ on the orientable surfaces, as a complement to their well-known toroidal embeddings.
From Euler's formula, the maximum orientable genus of $K_{3,3}$ is two, the maximum orientable 
genus of $K_5$ is three, and the $2$-cell embeddings of $K_{3,3}$ on the double torus and 
$K_5$ on the triple torus have a single face. Therefore, $K_{3,3}$ has orientable genus 
spectrum $\{1,2\}$, and $K_5$ has orientable genus spectrum $\{1,2,3\}$. 

By reconstructing embeddings from graph minors, we obtain the unique $2$-cell embedding of $K_{3,3}$ and 
all distinct $2$-cell embeddings of $K_5$ on the double torus.
The $2$-cell embeddings of $K_5$ on the triple torus are obtained by using an exhaustive computer 
search of rotation systems.
Figure~\ref{K33embedding} shows the unique embedding of $K_{3,3}$ on the double torus, 
Appendix A provides all the $2$-cell embeddings of $K_5$ on the double torus (represented by their rotation systems), 
and Appendix B provides all the rotation systems for different $2$-cell embeddings of $K_5$ on the triple torus.
Thus, $K_{3,3}$ has $3(=2+1)$ and $K_5$ has $50(=6+31+13)$ inequivalent $2$-cell embeddings on orientable surfaces in total. 
In Appendix C, we report some corrections to previous computational results from \cite{GagarinKocayNeilsen} for embeddings of 
small vertex-transitive graphs (and $K_{3,4}, K_{3,5}, K_{3,6}$) on the torus --
these corrections result from a corrected bug in computer code (spotted by Professor William Dickinson, 
Grand Valley State University).

Notice that the description of rotation systems in Appendix A allows a drawing of the embedding on the double torus to be 
constructed from its rotation system.
However, in general, it is a non-trivial task to find a drawing 
of a graph on a polygonal representation of a surface from
its rotation system. An ad-hoc approach has been used to draw two of the rotation systems from Appendix B
for embeddings of $K_5$ on the triple torus shown in Figure~\ref{K5tt}. The reader is encouraged to draw the other $11$ embeddings from Appendix B.
It would be interesting to find a constructive method to obtain all $2$-cell embeddings of $K_5$ on the triple torus.
\begin{figure}[ht]
\begin{center}
\begin{tikzpicture}[scale=0.75, line width = 0.5]
\draw (3, 0)--(2.598, 1.5)--(1.5, 2.598)
--(0,3)--(-1.5, 2.598)--(-2.598, 1.5)
--(-3,0)--(-2.598, -1.5)--(-1.5, -2.598)
--(0, -3)--(1.5, -2.598)--(2.598, -1.5)--(3,0);
%
\node  at (0.83, 2.97) {$a$};
\node  at (2.22, 2.22) {$b$};
\node  at (3.0, 0.8) {$a$};
\node  at (2.97, -0.79) {$b$};
\node  at (2.22, -2.08) {$c$};
\node  at (0.82, -2.98) {$d$};
\node  at (-0.81, -2.97) {$c$};
\node  at (-2.23, -2.23) {$d$};
\node  at (-2.97, -0.79) {$e$};
\node  at (-2.94, 0.82) {$f$};
\node  at (-2.18, 2.18) {$e$};
\node  at (-0.8, 3.05) {$f$};

\draw[line width=1] (-0.65, 0.375)--(0.65, 0.375);
\draw[line width=1] (0, 1.5)--(0.75, 2.8);
\draw[line width=1] (1.299, -0.75)--(2.8, 0.75);

\draw[line width=1] (1.299, -0.75)--(2.05,-2.05);
\draw[line width=1] (-1.299, -0.75)--(-0.75,-2.8);

\draw[line width=1] (1.299, -0.75)--(1.775,-2.325);
\draw[line width=1] (-0.65, 0.375)--(-0.375,-2.9);

\draw[line width=1] (1.299, -0.75)--(2.8,-0.75);
\draw[line width=1] (0.65, 0.375)--(2.05,2.05);
\draw[line width=1] (0, 1.5)--(1.775, 2.325);
\draw[line width=1] (2.7, -1.125)--(2.325, -1.775);
\draw[line width=1] (-1.299, -0.75)--(-1.125, -2.7);

\draw[line width=1] (0, 1.5)--(-0.75, 2.801);
\draw[line width=1] (-0.65, 0.375)--(-2.801, 0.75);

\draw[line width=1] (0, 1.5)--(1.125, 2.7);
\draw[line width=1] (0.65, 0.375)--(2.7, 1.125);
\draw[line width=1] (-1.299, -0.75)--(-2.801, -0.75);
\draw[line width=1] (-0.65, 0.375)--(-2.05, 2.05);

\draw[line width=1] (-1.299, -0.75)--(-2.05, -2.05);
\draw[line width=1] (0.65, 0.375)--(0.75, -2.801);

\draw[fill=white] (0, 1.5) circle[radius=0.1];
\draw[fill=white] (1.299, -0.75) circle[radius=0.1];
\draw[fill=white] (-1.299, -0.75) circle[radius=0.1];
\draw[fill=white] (0.65, 0.375) circle[radius=0.1];
\draw[fill=white] (-0.65, 0.375) circle[radius=0.1];

\node  at (0, 1.27) {\smallmath 2};
\node  at (1.2, -0.53) {\smallmath 1};
\node  at (-1.2, -0.5) {\smallmath 3};
\node  at (0.55, 0.61) {\smallmath 5};
\node  at (-0.55, 0.61) {\smallmath 4};


\draw (12, 0)--(11.598, 1.5)--(10.5, 2.598)
--(9,3)--(7.5, 2.598)--(6.402, 1.5)
--(6,0)--(6.402, -1.5)--(7.5, -2.598)
--(9, -3)--(10.5, -2.598)--(11.598, -1.5)--(12,0);
%
\node  at (9.83, 2.97) {$a$};
\node  at (11.22, 2.22) {$b$};
\node  at (12.0, 0.8) {$a$};
\node  at (11.97, -0.79) {$b$};
\node  at (11.22, -2.08) {$c$};
\node  at (9.82, -2.98) {$d$};
\node  at (8.19, -2.97) {$c$};
\node  at (6.77, -2.23) {$d$};
\node  at (6.03, -0.79) {$e$};
\node  at (6.06, 0.82) {$f$};
\node  at (6.82, 2.18) {$e$};
\node  at (8.2, 3.05) {$f$};

\draw[line width=1] (8.35, 0.375)--(9.65, 0.375);
\draw[line width=1] (10.299, -0.75)--(11.8, 0.75);
\draw[line width=1] (10.299, -0.75)--(11.05,-2.05);
\draw[line width=1] (10.299, -0.75)--(10.775,-2.325);
\draw[line width=1] (10.299, -0.75)--(11.8,-0.75);
\draw[line width=1] (9, 1.5)--(9.75, 2.8);
\draw[line width=1] (9, 1.5)--(6.95, 2.05);
\draw[line width=1] (9, 1.5)--(8.25, 2.801);
\draw[line width=1] (9, 1.5)--(10.125, 2.7);
\draw[line width=1] (7.701, -0.75)--(8.25,-2.8);
\draw[line width=1] (7.701, -0.75)--(6.199, -0.75);
\draw[line width=1] (7.701, -0.75)--(6.95, -2.05);
\draw[line width=1] (7.701, -0.75)--(6.1, -0.375);
\draw[line width=1] (8.35, 0.375)--(6.675, 1.775);
\draw[line width=1] (8.35, 0.375)--(6.199, 0.75);
\draw[line width=1] (8.35, 0.375)--(8.625,-2.9);
\draw[line width=1] (9.65, 0.375)--(9.75, -2.801);

\draw[line width=1] (9.65, 0.375)--(11.7, 1.125);

\draw[line width=1] (9.65, 0.375)--(11.05,2.05);

\draw[fill=white] (9, 1.5) circle[radius=0.1];
\draw[fill=white] (10.299, -0.75) circle[radius=0.1];
\draw[fill=white] (7.701, -0.75) circle[radius=0.1];
\draw[fill=white] (9.65, 0.375) circle[radius=0.1];
\draw[fill=white] (8.35, 0.375) circle[radius=0.1];

%
\end{tikzpicture}
\caption{Orientable (K5\#a) and non-orientable (K5\#l) $2$-cell embeddings of $K_5$ on the triple torus.}
\label{K5tt}
\end{center}
\end{figure}
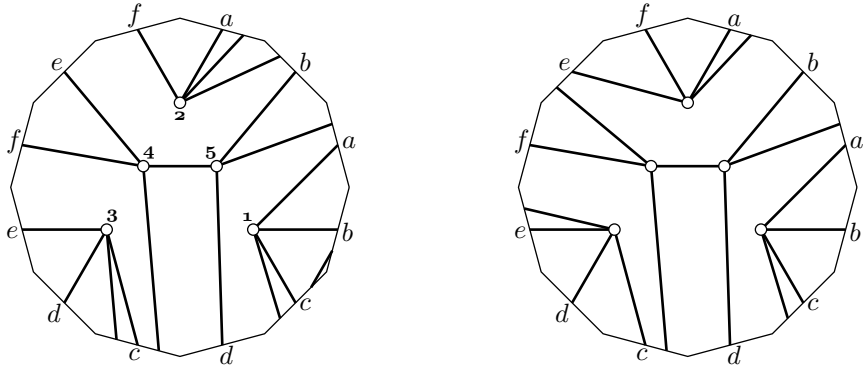

This paper also provides a number of different representations of the double torus and several examples 
of drawn $2$-cell embeddings, e.g. a symmetric non-orientable embedding of $K_5$ (Figures~\ref{K5fig43} and \ref{10gonK5}), an orientable embedding of $K_5$ with the trivial automorphism group (Figure~\ref{K5fig01}) on the double torus, two embeddings of $K_5$ on the triple torus (having one face, Figure~\ref{K5tt}). 
Different representations of the double torus can be used, for example, to have symmetric drawings of the embeddings (Figures~\ref{10gonK5}) or to solve geometric packing problems like in Brandt et al.~\cite{Dickinson19}.
For future research, it would be interesting to describe more precisely and obtain all polygonal representations of the double and triple tori.

Since the multi-graph $\Theta_5$ and its embeddings on the double torus play a crucial role in 
obtaining all the embeddings of $K_{3,3}$ and $K_5$ on this surface 
(similarly to the unique embedding of $\Theta_3$ on the torus), we ask the following:

\medbreak
\noindent
{\bf Question.}  How many distinct 
embeddings does $\Theta_{2g+1}$ have on 
the orientable surface of genus $g$, and how can they be obtained? 
\medbreak
\noindent By Euler's formula, these embeddings will have a single face.
When $g=1$, there is one embedding. When $g=2$, there
are three embeddings.

\noindent


\newpage
\section{Appendices}

Rotation systems for all inequivalent embeddings of $K_5$ on the double and triple tori are given in 
Appendices A and B, and a corrected table for the embeddings of small vertex-transitive 
graphs on the torus from \cite{GagarinKocayNeilsen} is shown in Appendix C.
The rotation for vertex $k\in\{1,2,3,4,5\}$ is indicated by $-k$, followed by
the cyclic list of adjacent vertices.  
The orientable embeddings are indicated by ``or'', the non-orientable by ``non''.

\medbreak
\textbf{A}: Rotation systems for the $31$ inequivalent $2$-cell embeddings of $K_5$ on the double torus are listed below.
The numbers in square brackets indicate the edge number, followed by a list of which sides of the fundamental octagon region are cut by the edge, so that a drawing of the corresponding embedding can be reconstructed from the rotation system.

\smallbreak

{
 \scriptsize
 
\noindent K5\#01 (or)\\
-1 5 [1 0 0 0 0] 4 [2 0 0 0 0] 2 [3 -3 0 0 0] 3 [4 -3 2 0 0]\\
-2 3 [5 2 0 0 0] 1 [3 3 0 0 0] 5 [6 4 0 0 0] 4 [7 0 0 0 0]\\
-3 4 [8 -1 0 0 0] 1 [4 -2 3 0 0] 2 [5 -2 0 0 0] 5 [9 0 0 0 0]\\
-4 2 [7 0 0 0 0] 1 [2 0 0 0 0] 5 [10 0 0 0 0] 3 [8 1 0 0 0]\\
-5 1 [1 0 0 0 0] 2 [6 -4 0 0 0] 3 [9 0 0 0 0] 4 [10 0 0 0 0]\\

\noindent K5\#02  (or)\\
-1 5 [1 0 0 0 0] 4 [2 0 0 0 0] 2 [3 -3 0 0 0] 3 [4 -3 2 -1 0]\\
-2 3 [5 2 0 0 0] 1 [3 3 0 0 0] 5 [6 4 0 0 0] 4 [7 0 0 0 0]\\
-3 5 [8 0 0 0 0] 4 [9 -1 0 0 0] 2 [5 -2 0 0 0] 1 [4 1 -2 3 0]\\
-4 2 [7 0 0 0 0] 1 [2 0 0 0 0] 5 [10 0 0 0 0] 3 [9 1 0 0 0]\\
-5 1 [1 0 0 0 0] 2 [6 -4 0 0 0] 3 [8 0 0 0 0] 4 [10 0 0 0 0]\\

\noindent K5\#03  (or)\\
-1 5 [1 0 0 0 0] 4 [2 0 0 0 0] 2 [3 -3 0 0 0] 3 [4 -3 2 -1 0]\\
-2 3 [5 2 0 0 0] 1 [3 3 0 0 0] 5 [6 4 0 0 0] 4 [7 0 0 0 0]\\
-3 5 [8 0 0 0 0] 4 [9 -1 0 0 0] 2 [5 -2 0 0 0] 1 [4 1 -2 3 0]\\
-4 2 [7 0 0 0 0] 1 [2 0 0 0 0] 3 [9 1 0 0 0] 5 [10 1 0 0 0]\\
-5 1 [1 0 0 0 0] 2 [6 -4 0 0 0] 4 [10 -1 0 0 0] 3 [8 0 0 0 0]\\

\noindent K5\#04  (or)\\
-1 5 [1 0 0 0 0] 4 [2 0 0 0 0] 2 [3 -3 0 0 0] 3 [4 -3 2 -1 0]\\
-2 3 [5 2 0 0 0] 1 [3 3 0 0 0] 5 [6 4 0 0 0] 4 [7 0 0 0 0]\\
-3 5 [8 0 0 0 0] 4 [9 -1 0 0 0] 2 [5 -2 0 0 0] 1 [4 1 -2 3 0]\\
-4 5 [10 -3 4 0 0] 1 [2 0 0 0 0] 3 [9 1 0 0 0] 2 [7 0 0 0 0]\\
-5 1 [1 0 0 0 0] 2 [6 -4 0 0 0] 4 [10 -4 3 0 0] 3 [8 0 0 0 0]\\

\noindent K5\#05  (or)\\
-1 5 [1 0 0 0 0] 4 [2 0 0 0 0] 2 [3 -3 0 0 0] 3 [4 -4 -3 4 0]\\
-2 3 [5 2 0 0 0] 1 [3 3 0 0 0] 5 [6 4 0 0 0] 4 [7 0 0 0 0]\\
-3 1 [4 -4 3 4 0] 4 [8 -1 0 0 0] 2 [5 -2 0 0 0] 5 [9 0 0 0 0]\\
-4 2 [7 0 0 0 0] 1 [2 0 0 0 0] 5 [10 0 0 0 0] 3 [8 1 0 0 0]\\
-5 1 [1 0 0 0 0] 2 [6 -4 0 0 0] 3 [9 0 0 0 0] 4 [10 0 0 0 0]\\

\noindent K5\#06  (or)\\
-1 5 [1 0 0 0 0] 4 [2 0 0 0 0] 2 [3 -3 0 0 0] 3 [4 -4 -3 4 0]\\
-2 3 [5 2 0 0 0] 1 [3 3 0 0 0] 5 [6 4 0 0 0] 4 [7 0 0 0 0]\\
-3 1 [4 -4 3 4 0] 4 [8 -1 0 0 0] 2 [5 -2 0 0 0] 5 [9 0 0 0 0]\\
-4 2 [7 0 0 0 0] 1 [2 0 0 0 0] 3 [8 1 0 0 0] 5 [10 2 0 0 0]\\
-5 1 [1 0 0 0 0] 2 [6 -4 0 0 0] 3 [9 0 0 0 0] 4 [10 -2 0 0 0]\\

\noindent K5\#07  (or)\\
-1 3 [1 0 0 0 0] 4 [2 0 0 0 0] 5 [3 -3 0 0 0] 2 [4 -4 0 0 0]\\
-2 3 [5 1 0 0 0] 5 [6 0 0 0 0] 1 [4 4 0 0 0] 4 [7 0 0 0 0]\\
-3 2 [5 -1 0 0 0] 4 [8 -1 0 0 0] 5 [9 -2 0 0 0] 1 [1 0 0 0 0]\\
-4 2 [7 0 0 0 0] 1 [2 0 0 0 0] 5 [10 1 -2 0 0] 3 [8 1 0 0 0]\\
-5 2 [6 0 0 0 0] 3 [9 2 0 0 0] 4 [10 2 -1 0 0] 1 [3 3 0 0 0]\\

\noindent K5\#08  (or)\\
-1 3 [1 0 0 0 0] 4 [2 0 0 0 0] 5 [3 -3 0 0 0] 2 [4 -4 0 0 0]\\
-2 3 [5 1 0 0 0] 5 [6 0 0 0 0] 1 [4 4 0 0 0] 4 [7 0 0 0 0]\\
-3 2 [5 -1 0 0 0] 4 [8 -1 0 0 0] 5 [9 -2 0 0 0] 1 [1 0 0 0 0]\\
-4 5 [10 -3 4 -1 0] 1 [2 0 0 0 0] 3 [8 1 0 0 0] 2 [7 0 0 0 0]\\
-5 4 [10 1 -4 3 0] 3 [9 2 0 0 0] 1 [3 3 0 0 0] 2 [6 0 0 0 0]\\

\noindent K5\#09  (or)\\
-1 3 [1 0 0 0 0] 4 [2 0 0 0 0] 5 [3 -3 0 0 0] 2 [4 -4 0 0 0]\\
-2 3 [5 2 1 0 0] 5 [6 0 0 0 0] 1 [4 4 0 0 0] 4 [7 0 0 0 0]\\
-3 4 [8 -1 0 0 0] 2 [5 -1 -2 0 0] 5 [9 -2 0 0 0] 1 [1 0 0 0 0]\\
-4 2 [7 0 0 0 0] 1 [2 0 0 0 0] 5 [10 -2 0 0 0] 3 [8 1 0 0 0]\\
-5 4 [10 2 0 0 0] 3 [9 2 0 0 0] 1 [3 3 0 0 0] 2 [6 0 0 0 0]\\

\noindent K5\#10  (or)\\
-1 3 [1 0 0 0 0] 4 [2 0 0 0 0] 5 [3 -3 0 0 0] 2 [4 -4 0 0 0]\\
-2 3 [5 2 1 0 0] 5 [6 0 0 0 0] 1 [4 4 0 0 0] 4 [7 0 0 0 0]\\
-3 4 [8 -1 0 0 0] 2 [5 -1 -2 0 0] 5 [9 -2 0 0 0] 1 [1 0 0 0 0]\\
-4 2 [7 0 0 0 0] 1 [2 0 0 0 0] 3 [8 1 0 0 0] 5 [10 2 1 -2 0]\\
-5 2 [6 0 0 0 0] 3 [9 2 0 0 0] 4 [10 2 -1 -2 0] 1 [3 3 0 0 0]\\

\noindent K5\#11  (or)\\
-1 3 [1 0 0 0 0] 4 [2 0 0 0 0] 5 [3 -3 0 0 0] 2 [4 -4 0 0 0]\\
-2 3 [5 2 0 0 0] 5 [6 0 0 0 0] 1 [4 4 0 0 0] 4 [7 0 0 0 0]\\
-3 1 [1 0 0 0 0] 4 [8 -1 0 0 0] 5 [9 -2 0 0 0] 2 [5 -2 0 0 0]\\
-4 5 [10 4 -3 0 0] 1 [2 0 0 0 0] 3 [8 1 0 0 0] 2 [7 0 0 0 0]\\
-5 2 [6 0 0 0 0] 3 [9 2 0 0 0] 4 [10 3 -4 0 0] 1 [3 3 0 0 0]\\

\noindent K5\#12  (or)\\
-1 3 [1 0 0 0 0] 4 [2 0 0 0 0] 5 [3 -3 0 0 0] 2 [4 -4 0 0 0]\\
-2 5 [5 0 0 0 0] 3 [6 4 0 0 0] 1 [4 4 0 0 0] 4 [7 0 0 0 0]\\
-3 2 [6 -4 0 0 0] 4 [8 -1 0 0 0] 5 [9 -2 0 0 0] 1 [1 0 0 0 0]\\
-4 2 [7 0 0 0 0] 1 [2 0 0 0 0] 5 [10 -2 0 0 0] 3 [8 1 0 0 0]\\
-5 4 [10 2 0 0 0] 3 [9 2 0 0 0] 1 [3 3 0 0 0] 2 [5 0 0 0 0]\\

\noindent K5\#13  (or)\\
-1 3 [1 0 0 0 0] 4 [2 0 0 0 0] 5 [3 -3 0 0 0] 2 [4 -4 0 0 0]\\
-2 4 [5 0 0 0 0] 5 [6 0 0 0 0] 1 [4 4 0 0 0] 3 [7 4 -3 2 0]\\
-3 1 [1 0 0 0 0] 4 [8 -1 0 0 0] 2 [7 -2 3 -4 0] 5 [9 -2 0 0 0]\\
-4 2 [5 0 0 0 0] 1 [2 0 0 0 0] 3 [8 1 0 0 0] 5 [10 0 0 0 0]\\
-5 4 [10 0 0 0 0] 3 [9 2 0 0 0] 1 [3 3 0 0 0] 2 [6 0 0 0 0]\\

\noindent K5\#14  (or)\\
-1 5 [1 0 0 0 0] 2 [2 -3 0 0 0] 3 [3 0 0 0 0] 4 [4 -2 0 0 0]\\
-2 3 [5 4 0 0 0] 5 [6 0 0 0 0] 1 [2 3 0 0 0] 4 [7 4 -1 0 0]\\
-3 2 [5 -4 0 0 0] 4 [8 -1 0 0 0] 5 [9 -1 0 0 0] 1 [3 0 0 0 0]\\
-4 2 [7 1 -4 0 0] 1 [4 2 0 0 0] 5 [10 0 0 0 0] 3 [8 1 0 0 0]\\
-5 4 [10 0 0 0 0] 2 [6 0 0 0 0] 1 [1 0 0 0 0] 3 [9 1 0 0 0]\\

\noindent K5\#15 (non)\\
-1 5 [1 0 0 0 0] 4 [2 0 0 0 0] 2 [3 -3 0 0 0] 3 [4 -3 2 -1 0]\\
-2 3 [5 2 0 0 0] 1 [3 3 0 0 0] 5 [6 4 0 0 0] 4 [7 0 0 0 0]\\
-3 5 [8 0 0 0 0] 4 [9 -1 0 0 0] 2 [5 -2 0 0 0] 1 [4 1 -2 3 0]\\
-4 2 [7 0 0 0 0] 1 [2 0 0 0 0] 3 [9 1 0 0 0] 5 [10 1 -4 3 4]\\
-5 4 [10 -4 -3 4 -1] 2 [6 -4 0 0 0] 3 [8 0 0 0 0] 1 [1 0 0 0 0]\\

\noindent K5\#16 (non)\\
-1 5 [1 0 0 0 0] 4 [2 0 0 0 0] 2 [3 -3 0 0 0] 3 [4 -3 2 -1 0]\\
-2 3 [5 2 0 0 0] 1 [3 3 0 0 0] 5 [6 4 0 0 0] 4 [7 0 0 0 0]\\
-3 5 [8 0 0 0 0] 4 [9 -1 0 0 0] 2 [5 -2 0 0 0] 1 [4 1 -2 3 0]\\
-4 5 [10 4 0 0 0] 1 [2 0 0 0 0] 3 [9 1 0 0 0] 2 [7 0 0 0 0]\\
-5 4 [10 -4 0 0 0] 2 [6 -4 0 0 0] 3 [8 0 0 0 0] 1 [1 0 0 0 0]\\

\noindent K5\#17 (non)\\
-1 3 [1 0 0 0 0] 4 [2 0 0 0 0] 5 [3 -3 0 0 0] 2 [4 -4 0 0 0]\\
-2 3 [5 1 0 0 0] 5 [6 0 0 0 0] 1 [4 4 0 0 0] 4 [7 0 0 0 0]\\
-3 2 [5 -1 0 0 0] 4 [8 -1 0 0 0] 5 [9 -2 0 0 0] 1 [1 0 0 0 0]\\
-4 5 [10 4 -3 0 0] 1 [2 0 0 0 0] 3 [8 1 0 0 0] 2 [7 0 0 0 0]\\
-5 2 [6 0 0 0 0] 3 [9 2 0 0 0] 4 [10 3 -4 0 0] 1 [3 3 0 0 0]\\

\noindent K5\#18 (non)\\
-1 3 [1 0 0 0 0] 4 [2 0 0 0 0] 5 [3 -3 0 0 0] 2 [4 -4 0 0 0]\\
-2 3 [5 1 0 0 0] 5 [6 0 0 0 0] 1 [4 4 0 0 0] 4 [7 0 0 0 0]\\
-3 2 [5 -1 0 0 0] 4 [8 -1 0 0 0] 5 [9 -2 0 0 0] 1 [1 0 0 0 0]\\
-4 5 [10 -3 0 0 0] 1 [2 0 0 0 0] 3 [8 1 0 0 0] 2 [7 0 0 0 0]\\
-5 2 [6 0 0 0 0] 3 [9 2 0 0 0] 1 [3 3 0 0 0] 4 [10 3 0 0 0]\\

\noindent K5\#19 (non)\\
-1 3 [1 0 0 0 0] 4 [2 0 0 0 0] 5 [3 -3 0 0 0] 2 [4 -4 0 0 0]\\
-2 3 [5 2 1 0 0] 5 [6 0 0 0 0] 1 [4 4 0 0 0] 4 [7 0 0 0 0]\\
-3 4 [8 -1 0 0 0] 2 [5 -1 -2 0 0] 5 [9 -2 0 0 0] 1 [1 0 0 0 0]\\
-4 2 [7 0 0 0 0] 1 [2 0 0 0 0] 3 [8 1 0 0 0] 5 [10 1 -4 0 0]\\
-5 2 [6 0 0 0 0] 3 [9 2 0 0 0] 1 [3 3 0 0 0] 4 [10 4 -1 0 0]\\

\noindent K5\#20 (non)\\
-1 3 [1 0 0 0 0] 4 [2 0 0 0 0] 5 [3 -3 0 0 0] 2 [4 -4 0 0 0]\\
-2 3 [5 2 1 0 0] 5 [6 0 0 0 0] 1 [4 4 0 0 0] 4 [7 0 0 0 0]\\
-3 4 [8 -1 0 0 0] 2 [5 -1 -2 0 0] 5 [9 -2 0 0 0] 1 [1 0 0 0 0]\\
-4 5 [10 4 -3 0 0] 1 [2 0 0 0 0] 3 [8 1 0 0 0] 2 [7 0 0 0 0]\\
-5 2 [6 0 0 0 0] 3 [9 2 0 0 0] 4 [10 3 -4 0 0] 1 [3 3 0 0 0]\\

\noindent K5\#21 (non)\\
-1 3 [1 0 0 0 0] 4 [2 0 0 0 0] 5 [3 -3 0 0 0] 2 [4 -4 0 0 0]\\
-2 3 [5 2 0 0 0] 5 [6 0 0 0 0] 1 [4 4 0 0 0] 4 [7 0 0 0 0]\\
-3 1 [1 0 0 0 0] 4 [8 -1 0 0 0] 5 [9 -2 0 0 0] 2 [5 -2 0 0 0]\\
-4 2 [7 0 0 0 0] 1 [2 0 0 0 0] 5 [10 1 -2 0 0] 3 [8 1 0 0 0]\\
-5 2 [6 0 0 0 0] 3 [9 2 0 0 0] 4 [10 2 -1 0 0] 1 [3 3 0 0 0]\\

\noindent K5\#22 (non)\\
-1 3 [1 0 0 0 0] 4 [2 0 0 0 0] 5 [3 -3 0 0 0] 2 [4 -4 0 0 0]\\
-2 3 [5 2 0 0 0] 5 [6 0 0 0 0] 1 [4 4 0 0 0] 4 [7 0 0 0 0]\\
-3 1 [1 0 0 0 0] 4 [8 -1 0 0 0] 5 [9 -2 0 0 0] 2 [5 -2 0 0 0]\\
-4 2 [7 0 0 0 0] 1 [2 0 0 0 0] 3 [8 1 0 0 0] 5 [10 1 -4 0 0]\\
-5 2 [6 0 0 0 0] 3 [9 2 0 0 0] 1 [3 3 0 0 0] 4 [10 4 -1 0 0]\\

\noindent K5\#23 (non)\\
-1 3 [1 0 0 0 0] 4 [2 0 0 0 0] 5 [3 -3 0 0 0] 2 [4 -4 0 0 0]\\
-2 4 [5 0 0 0 0] 5 [6 0 0 0 0] 1 [4 4 0 0 0] 3 [7 4 -3 2 0]\\
-3 1 [1 0 0 0 0] 4 [8 -1 0 0 0] 2 [7 -2 3 -4 0] 5 [9 -2 0 0 0]\\
-4 2 [5 0 0 0 0] 1 [2 0 0 0 0] 3 [8 1 0 0 0] 5 [10 1 -4 0 0]\\
-5 2 [6 0 0 0 0] 3 [9 2 0 0 0] 1 [3 3 0 0 0] 4 [10 4 -1 0 0]\\

\noindent K5\#24 (non)\\
-1 4 [1 -4 0 0 0] 2 [2 -1 0 0 0] 5 [3 0 0 0 0] 3 [4 0 0 0 0]\\
-2 5 [5 2 0 0 0] 3 [6 3 0 0 0] 4 [7 0 0 0 0] 1 [2 1 0 0 0]\\
-3 5 [8 0 0 0 0] 4 [9 0 0 0 0] 2 [6 -3 0 0 0] 1 [4 0 0 0 0]\\
-4 2 [7 0 0 0 0] 1 [1 4 0 0 0] 5 [10 4 -3 2 0] 3 [9 0 0 0 0]\\
-5 4 [10 -2 3 -4 0] 2 [5 -2 0 0 0] 3 [8 0 0 0 0] 1 [3 0 0 0 0]\\

\noindent K5\#25 (non)\\
-1 3 [1 -1 2 0 0] 2 [2 -1 0 0 0] 5 [3 0 0 0 0] 4 [4 -4 0 0 0]\\
-2 5 [5 2 0 0 0] 3 [6 3 0 0 0] 4 [7 0 0 0 0] 1 [2 1 0 0 0]\\
-3 1 [1 -2 1 0 0] 4 [8 0 0 0 0] 2 [6 -3 0 0 0] 5 [9 0 0 0 0]\\
-4 2 [7 0 0 0 0] 1 [4 4 0 0 0] 3 [8 0 0 0 0] 5 [10 1 -2 3 0]\\
-5 1 [3 0 0 0 0] 2 [5 -2 0 0 0] 3 [9 0 0 0 0] 4 [10 -3 2 -1 0]\\

\noindent K5\#26 (non)\\
-1 2 [1 -1 0 0 0] 3 [2 -2 3 0 0] 5 [3 0 0 0 0] 4 [4 -4 0 0 0]\\
-2 5 [5 2 0 0 0] 3 [6 3 0 0 0] 4 [7 0 0 0 0] 1 [1 1 0 0 0]\\
-3 5 [8 0 0 0 0] 4 [9 0 0 0 0] 2 [6 -3 0 0 0] 1 [2 -3 2 0 0]\\
-4 2 [7 0 0 0 0] 1 [4 4 0 0 0] 5 [10 4 0 0 0] 3 [9 0 0 0 0]\\
-5 1 [3 0 0 0 0] 2 [5 -2 0 0 0] 3 [8 0 0 0 0] 4 [10 -4 0 0 0]\\

\noindent K5\#27 (non)\\
-1 2 [1 -1 0 0 0] 3 [2 -2 3 0 0] 5 [3 0 0 0 0] 4 [4 -4 0 0 0]\\
-2 5 [5 2 0 0 0] 3 [6 3 0 0 0] 4 [7 0 0 0 0] 1 [1 1 0 0 0]\\
-3 5 [8 0 0 0 0] 4 [9 0 0 0 0] 2 [6 -3 0 0 0] 1 [2 -3 2 0 0]\\
-4 5 [10 3 4 0 0] 1 [4 4 0 0 0] 3 [9 0 0 0 0] 2 [7 0 0 0 0]\\
-5 1 [3 0 0 0 0] 2 [5 -2 0 0 0] 3 [8 0 0 0 0] 4 [10 -4 -3 0 0]\\

\noindent K5\#28 (non)\\
-1 5 [1 0 0 0 0] 2 [2 -3 0 0 0] 3 [3 0 0 0 0] 4 [4 -2 0 0 0]\\
-2 3 [5 4 0 0 0] 5 [6 0 0 0 0] 1 [2 3 0 0 0] 4 [7 4 -1 0 0]\\
-3 2 [5 -4 0 0 0] 4 [8 -1 0 0 0] 5 [9 -2 3 -4 0] 1 [3 0 0 0 0]\\
-4 2 [7 1 -4 0 0] 1 [4 2 0 0 0] 5 [10 0 0 0 0] 3 [8 1 0 0 0]\\
-5 4 [10 0 0 0 0] 2 [6 0 0 0 0] 3 [9 4 -3 2 0] 1 [1 0 0 0 0]\\

\noindent K5\#29 (non)\\
-1 2 [1 -1 0 0 0] 4 [2 -2 0 0 0] 5 [3 0 0 0 0] 3 [4 -4 3 0 0]\\
-2 4 [5 0 0 0 0] 5 [6 4 0 0 0] 3 [7 0 0 0 0] 1 [1 1 0 0 0]\\
-3 2 [7 0 0 0 0] 1 [4 -3 4 0 0] 4 [8 -3 0 0 0] 5 [9 0 0 0 0]\\
-4 2 [5 0 0 0 0] 1 [2 2 0 0 0] 5 [10 2 -1 0 0] 3 [8 3 0 0 0]\\
-5 4 [10 1 -2 0 0] 3 [9 0 0 0 0] 2 [6 -4 0 0 0] 1 [3 0 0 0 0]\\

\noindent K5\#30 (non)\\
-1 2 [1 -1 0 0 0] 4 [2 -2 0 0 0] 5 [3 0 0 0 0] 3 [4 -1 2 0 0]\\
-2 4 [5 0 0 0 0] 5 [6 4 0 0 0] 3 [7 0 0 0 0] 1 [1 1 0 0 0]\\
-3 1 [4 -2 1 0 0] 2 [7 0 0 0 0] 4 [8 -3 0 0 0] 5 [9 0 0 0 0]\\
-4 2 [5 0 0 0 0] 1 [2 2 0 0 0] 5 [10 2 -1 -2 1] 3 [8 3 0 0 0]\\
-5 1 [3 0 0 0 0] 3 [9 0 0 0 0] 2 [6 -4 0 0 0] 4 [10 -1 2 1 -2]\\

\noindent K5\#31 (non)\\
-1 3 [1 -1 0 0 0] 4 [2 -2 0 0 0] 5 [3 0 0 0 0] 2 [4 -1 0 0 0]\\
-2 4 [5 0 0 0 0] 5 [6 4 0 0 0] 3 [7 0 0 0 0] 1 [4 1 0 0 0]\\
-3 1 [1 1 0 0 0] 2 [7 0 0 0 0] 4 [8 -3 0 0 0] 5 [9 0 0 0 0]\\
-4 2 [5 0 0 0 0] 1 [2 2 0 0 0] 5 [10 2 -1 -2 1] 3 [8 3 0 0 0]\\
-5 1 [3 0 0 0 0] 3 [9 0 0 0 0] 2 [6 -4 0 0 0] 4 [10 -1 2 1 -2]\\

}

\bigskip
\noindent\textbf{B}: 
Rotation systems for the $13$ inequivalent $2$-cell embeddings of $K_5$ on the triple torus, $11$ orientable and $2$ non-orientable. Each embedding has exactly one face, with each vertex appearing four times and each edge appearing twice on its boundary. These were found by constructing all possible rotation systems 
for $K_5$ for any orientable genus, then selecting those with exactly one face, so that it is a rotation
system for the triple torus.   This method does not construct drawings of the embeddings.

{\scriptsize

\begin{multicols}{5}

\noindent K5\#a (or)\\
-1 2 5 3 4 \\
-2  1 5 3 4 \\
-3  1 2 5 4 \\
-4  1 2 3 5 \\
-5 1 2 3 4 \\

\noindent K5\#b (or)\\
-1 2 4 3 5 \\
-2  1 3 4 5 \\
-3  1 2 5 4 \\
-4  1 2 3 5 \\
-5 1 2 3 4 \\

\noindent K5\#c (or)\\
-1 2 4 3 5 \\
-2  1 3 4 5 \\
-3  1 5 2 4 \\
-4  1 2 5 3 \\
-5 1 2 3 4 \\

\columnbreak

\noindent K5\#d (or)\\
-1 2 4 5 3 \\
-2  1 3 4 5 \\
-3  1 5 2 4 \\
-4  1 2 5 3 \\
-5 1 2 3 4 \\

\noindent K5\#e (or)\\
-1 2 5 3 4 \\
-2  1 4 5 3 \\
-3  1 5 2 4 \\
-4  1 2 5 3 \\
-5 1 2 3 4 \\

\noindent K5\#f (or)\\
-1 2 5 4 3 \\
-2  1 4 5 3 \\
-3  1 5 2 4 \\
-4  1 2 5 3 \\
-5 1 2 3 4 \\

\columnbreak

\noindent K5\#g (or)\\
-1 2 5 4 3 \\
-2  1 5 3 4 \\
-3  1 5 4 2 \\
-4  1 2 5 3 \\
-5 1 2 3 4 \\

\noindent K5\#h (or)\\
-1 2 3 4 5 \\
-2  1 3 4 5 \\
-3  1 5 4 2 \\
-4  1 2 5 3 \\
-5 1 2 3 4 \\

\noindent K5\#i (or)\\
-1 2 4 3 5 \\
-2  1 3 4 5 \\
-3  1 5 4 2 \\
-4  1 2 5 3 \\
-5 1 2 3 4 \\

\columnbreak

\noindent K5\#j (or)\\
-1 2 5 4 3 \\
-2  1 5 3 4 \\
-3  1 2 5 4 \\
-4  1 3 2 5 \\
-5 1 2 3 4 \\

\noindent K5\#k (or)\\
-1 2 3 4 5 \\
-2  1 5 4 3 \\
-3  1 5 4 2 \\
-4  1 3 2 5 \\
-5 1 2 3 4 \\


\noindent K5\#l (non)\\
-1 2 3 4 5 \\
-2  1 4 3 5 \\
-3  1 2 5 4 \\
-4  1 2 3 5 \\
-5 1 2 3 4 \\

\columnbreak

\noindent K5\#m (non)\\
-1 2 3 4 5 \\
-2  1 5 4 3 \\
-3  1 4 5 2 \\
-4  1 3 2 5 \\
-5 1 2 3 4 \\

\end{multicols}

}

\bigskip
\noindent\textbf{C}: Erratum to \cite{GagarinKocayNeilsen}.
Tables of torus embeddings of many small graphs appear in~\cite{GagarinKocayNeilsen}.
The software that generated the original embeddings missed several embeddings. The condition 
that controls the loop was incorrect, causing the loop to sometimes stop too soon. The 
corrected numbers of embeddings are shown in the following table. The graphs with corrected 
entries are marked with [*] in the last column.
These are taken from \emph{http://www.combinatorialmath.ca/G\&G/}.

\noindent (And in the toroidal embedding $K_5\#1$ of Figure~7 
and Figure~1 of~\cite{GagarinKocayNeilsen}, the embedding
is incorrectly marked as non-orientable.)

\begin{table}[htbp]
  \centering
  \scriptsize
  \begin{tabular}{@{} lllllllll @{}}
{\bf graph}&$n$&$\varepsilon$&$f$&{\bf \#emb.}&{\bf or.}&{\bf non.}&{\bf groups}\\
$K_4$&4&6&2&2&0&2&[24] $4^1,3^1$\\
$K_5$&5&10&5&6&3&3&[120] $20^1,4^1,2^3,1^1$ [*]\\
$K_{3,3}$&6&9&3&2&0&2&[72] $18^1,2^1$\\
3-Prism&6&9&3&5&0&5&[12] $6^1,2^2,1^2$\\
Octahedron&6&12&6&17&4&13&[48] $12^1,6^1,4^3,3^1,2^6,1^5$\\
$K_6$&6&15&9&4&2&2&[720] $6^2,2^1,1^1$\\
$K_{3,4}$&7&12&5&3&0&3&[144] $4^1,3^1,2^1$\\
${\tiny\sim} C_7=C_7(2)$&7&14&7&28&23&5&[14] $14^1,2^{14},1^{13}$\\
$K_7$ [$\Delta$]&7&21&14&1&1&0&[5040] $42^1$\\
$K_{3,5}$&8&15&7&1&0&1&[720] $3^1$\\
Cube$=C_4\times K_2$&8&12&4&5&0&5&[48] $24^1,8^2,3^1,2^1$\\
$C_8^+$&8&12&4&5&1&4&[16] $2^4,1^1$[*]\\
$K_{4,4}$&8&16&8&2&0&2&[1152] $32^1,16^1$\\
${\tiny\sim}C_8+=C_8(2)$&8&16&8&37&20&17&[16] $16^1,4^4,2^{13},1^{19}$ [*]\\
${\tiny\sim}$Cube&8&16&8&8&4&4&[48] $4^2,2^5,1^1$\\
${\tiny\sim}C_8$&8&20&12&10&10&0&[16] $2^7,1^3$ [*]\\
${\tiny\sim}(2C_4)$&8&20&12&6&2&4&[128] $8^2,4^2,2^2$ [*]\\
${\tiny\sim}(4K_2)$ [$\Delta$]&8&24&16&1&0&1&[384] $16^1$\\
$K_{3,6}$&9&18&9&1&0&1&[4320] $18^1$\\
$C_9(2)$&9&18&9&37&34&3&[18] $18^1,6^1,2^{19},1^{16}$ [*]\\
$C_9(3)$&9&18&9&6&4&2&[18] $18^1,3^1, 2^3,1^1$ [*]\\
$K_3\times K_3$=Paley&9&18&9&7&3&4&[72] $36^1,18^1,4^1,2^3,1^{1}$\\
${\tiny\sim}(3K_3)$ [$\Delta$]&9&27&18&1&0&1&[1296] $54^1$\\
${\tiny\sim}C_9$ [$\Delta$]&9&27&18&1&1&0&[18] $18^1$\\
Petersen&10&15&5&1&0&1&[120] $3^1$\\
$C_{10}^+$&10&15&5&6&1&5&[20] $10^1, 2^4, 1^1$\\
$C_5\times K_2$=5-Prism&10&15&5&5&0&5&[20] $2^3, 1^2$\\
$C_{10}(2)$&10&20&10&60&42&18&[20] $20^1, 4^3, 2^{23}, 1^{33}$\\
$C_{10}(4)$&10&20&10&1&1&0&[320] $20^1$\\
${\tiny\sim}(K_5\times K_2)$&10&20&10&1&1&0&[240] $40^1$\\
${\tiny\sim}C_{10}(2)$&10&25&15&1&0&1&[20] $10^1$\\
${\tiny\sim}C_{10}(4)$&10&25&15&4&4&0&[320] $10^1,2^3$\\
${\tiny\sim}(C_5\times K_2)$ [$\Delta$]&10&30&20&1&1&0&[20] $20^1$\\
$C_{11}(2)$&11&22&11&77&74&3&[22] $22^1, 2^{36}, 1^{40}$ [*]\\
$C_{11}(3)$&11&22&11&1&1&0&[22] $22^1$\\
${\tiny\sim}C_{11}(3)$ [$\Delta$]&11&33&22&1&1&0&[22] $22^1$\\
$C_{12}(5^+)$&12&18&6&3&1&2&[48] $12^1, 6^1, 2^1$[*]\\
$C_6\times K_2$=6-Prism&12&18&6&9&0&9&[24] $12^1,4^2,2^4,1^2$\\
$C_{12}^+$&12&18&6&7&1&6&[24] $6^1,2^4,1^{2}$\\
trunc($K_4$)&12&18&6&9&0&9&[24] $4^1,3^1,2^2,1^5$\\
$C_{12}(3^+,6)$&12&24&12&1&0&1&[48] $24^1$\\
$C_{12}(2)$&12&24&12&138&110&28&[24] $24^1, 8^1, 6^2, 4^4, 2^{45}, 1^{85}$ [*]\\
$C_{12}(3)$&12&24&12&1&1&0&[24] $24^1$\\
$C_{12}(4)$&12&24&12&2&1&1&[24] $24^1, 4^1$\\
$C_{12}(5)$&12&24&12&2&0&2&[768] $24^2$ [*]\\
$C_{12}(5^+,6)$&12&24&12&10&10&0&[48] $6^2, 2^6,1^2$\\
L(Cube)&12&24&12&16&1&15&[48] $24^1, 8^2, 4^1, 3^1, 2^3, 1^8$ [*]\\
$C_4\times C_3$&12&24&12&3&0&3&[48] $24^1, 4^1, 2^1$ [*]\\
antip(trunc($K_4$))&12&24&12&1&0&1&[24] $3^1$\\
Icosahedron&12&30&18&12&5&7&[120] $3^1,2^4, 1^7$\\
Octahedron $\times K_2$&12&30&18&1&0&1&[96] $12^1$\\
$C_{12}(5,6)$&12&30&18&8&5&3&[768] $12^1, 6^1, 4^1, 2^4, 1^1$\\
$C_{12}(2,5+)$&12&30&18&1&1&0&[12] $12^1$\\
$C_{12}(4,5^+)$&12&30&18&1&1&0&[12] $12^1$\\
$C_{12}(2,3)$ [$\Delta$]&12&36&24&1&1&0&[24] $24^1$\\
$C_{12}(2,5)$ [$\Delta$]&12&36&24&1&0&1&[144] $72^1$\\
$C_{12}(3,4)$ [$\Delta$]&12&36&24&1&1&0&[24] $24^1$\\
$C_{12}(4,5)$ [$\Delta$]&12&36&24&1&0&1&[48] $24^1$\\
$Q_4=C_4\times C_4$&16&32&16&1&0&1&[384] $64^1$\\
 \end{tabular}
\end{table}

\end{document}